\documentclass[11pt]{amsart}
\usepackage[margin=1.5cm]{geometry}
\usepackage{xcolor}
\usepackage{amsmath}
\usepackage{amsthm}
\usepackage{lipsum}
\usepackage{comment}
\usepackage{amsopn}
\usepackage{amssymb}
\usepackage{fullpage}
\usepackage{soul}
\usepackage{enumerate}
\numberwithin{equation}{section}
\usepackage{float}
\usepackage{graphicx}
\usepackage{titlesec}
\usepackage{dsfont}
\setlength{\headsep}{0.2in}
\usepackage{fancyhdr}
\usepackage{hyperref}
\usepackage{textcomp}
\pagestyle{fancy}
\usepackage{fancyhdr}
\pagestyle{fancy}

\newcommand{\hiddennumberedsubsection}[1]{%
	\refstepcounter{subsection}%
	\subsectionmark{#1}%
	\noindent\textbf{\thesubsection\quad #1}%
}

\newtheorem{theorem}{Theorem}[section]

\newtheorem{corollary}[theorem]{Corollary}
\newtheorem{lemma}[theorem]{Lemma}
\newtheorem{proposition}[theorem]{Proposition}
\theoremstyle{definition}

\newtheorem{remark}[theorem]{Remark}
\newtheorem{example}[theorem]{Example}

\DeclareMathOperator{\C}{\mathbb{C}}
\DeclareMathOperator{\D}{\mathbb{D}}

\titleformat{\section}
{\normalfont\scshape}{\thesection}{1em}{\centering}
\titleformat{\subsection}[runin]
{\bfseries}{\thesubsection}{1em}{}

\begin{document}
	
	\title[Eigenvalues for Infinitesimal Generators]{Eigenvalues for Infinitesimal Generators of Semigroups of Composition Operators}
	
	\author{Maria Kourou$^1$}
	\thanks{$^1$Partially supported by the Alexander von Humboldt Foundation.}  
	\address{Department of Mathematics, Julius-Maximilians University of W\"urzburg, 97074, W\"urzburg, Germany}
	\email{maria.kourou@uni-wuerzburg.de}
	
	\author{Eleftherios K. Theodosiadis$^2$}
	\thanks{$^2$Supported the Hellenic Foundation for Research and Innovation (H.F.R.I.) under the ``2nd Call for H.F.R.I. Research Projects to support Faculty Members \& Researchers'' (Project Number: 4662).}
	\address{Department of Mathematics, Aristotle University of Thessaloniki, 54124, Thessaloniki, Greece}
	\email{eltheodog@math.auth.gr}

	\author{Konstantinos Zarvalis$^3$}
	\thanks{$^3$Partially supported by Junta de Andaluc\'{i}a, grant number QUAL21 005 USE.}
	\address{Department of Mathematics, Aristotle University of Thessaloniki, 54124, Thessaloniki, Greece}
	\email{zarkonath@math.auth.gr}

	\fancyhf{}
	\renewcommand{\headrulewidth}{0.2pt}
	\fancyhead[RO,LE]{\small \thepage}
	\fancyhead[CE]{\footnotesize Eigenvalues for Infinitesimal Generators}
	\fancyhead[CO]{\footnotesize M. Kourou, E. Theodosiadis, and K. Zarvalis} 
	
	\fancyfoot[L,R,C]{}
	
	\subjclass[2020]{Primary: 47B33, 47D06, 30D05; Secondary: 30C45, 37F44, 30H99}
	\date{}
	\keywords{Composition operator, infinitesimal generator, point spectrum, Koenigs domain, Hardy space, Bergman space, Dirichlet space}

	\begin{abstract}
		We study the eigenvalues for infinitesimal generators of semigroups of composition operators acting on Hardy spaces, Bergman spaces, and the Dirichlet space. Such semigroups are induced by semigroups of holomorphic functions. Depending on the type of the holomorphic semigroup and the Euclidean geometry of its Koenigs domain, we find containment relations as well as sufficient conditions for the characterization of the point spectrum of the induced infinitesimal generator. For the Dirichlet space we study all types of non-elliptic semigroups whereas for the Hardy and Bergman spaces we work on parabolic semigroups extending the work of Betsakos in the hyperbolic case.
	\end{abstract}
	
	\maketitle
	
	\tableofcontents
	
	\section{\quad Introduction}
	
	\hiddennumberedsubsection{Framework}
	
	Let $X$ be a Banach space of analytic functions in the unit disk $\D$ of the complex plane $\C$. Given a self-map $\phi$ of the unit disk, one may define the \textit{composition operator} $T_\phi$ acting on $X$ with $T_\phi(f):=f\circ\phi$, $f\in X$. The mapping $\phi$ is called the \textit{symbol} of the operator. Composition operators have been a focal point of study in operator theory since the 1960s, with a plethora of high-quality research being published. Various properties of composition operators have been examined, such as the range $T_\phi(X)$, the norm of the operator $T_\phi$, or its point spectrum.
	
	One special instance of composition operators results when the symbols belong to a \textit{one parameter semigroup of holomorphic functions of the unit disk} (from now on a \textit{semigroup in} $\D$). Formally, a semigroup in $\D$ is a family $(\phi_t)$ of holomorphic functions $\phi_t:\D\to\D$, $t\ge0$, which satisfy:
	\begin{enumerate}
		\item[(i)] $\phi_0$ is the identity map in $\D$;
		\item[(ii)] $\phi_{t+s}(z)=\phi_t(\phi_s(z))$, for every $t,s\ge0$ and $z\in\D$;
		\item[(iii)] $\phi_t(z) \xrightarrow{t\to0^+}z$ uniformly on compacta in $\D$.
	\end{enumerate}
	
	Throughout the past two decades, one-parameter semigroups of holomorphic functions have been an area of increased research interest; we refer to the monographs \cite{Booksem, shoikhet} for the complete picture of their theory as well as to some recent works \cite{BZ, contreras_gumenyuk, gumenyuk_kourou_roth}. The modern theory of semigroups emerged in 1978 with the seminal paper of Berkson and Porta \cite{berksonporta}. In their work, they studied semigroups in $\D$ as an aftermath of composition operators. Indeed, given a semigroup $(\phi_t)$ in $\D$ and a Banach space $X$ of analytic functions, one may consider the family of composition operators $(T_t)$, where $T_t:=T_{\phi_t}$, $t\ge0$, acting on $X$. Clearly $T_0$ is the identity operator on $X$ and $T_{t+s}\equiv T_t\circ T_s$, for every $t,s\ge0$. Therefore, the family $(T_t)$ forms a \textit{semigroup of composition operators}. We will say that $(\phi_t)$ \textit{induces} $(T_t)$ or that $(T_t)$ is \textit{induced by} $(\phi_t)$. Semigroups of composition operators have been extensively studied in the past three decades. Siskakis initiated their study in Bergman spaces and the Dirichlet space; see \cite{Siskakis-Bergman} and \cite{Siskakis-Dirichlet}, respectively. Semigroups of composition operators have been examined in various other spaces as well, such as $\textup{Q}_p$ spaces, $\textup{BMOA}$, $\textup{VMOA}$, Bloch spaces, mixed norm spaces, and the disk algebra; see e.g. \cite{contreras_2019, avicou_2016, blasco_etal_2008, chalmoukis_daskalogiannis, CDM-Algebra, contreras_gomez, elin_2018, Qian_Wu_Wulan}. For a review on the theory of semigroups of composition operators on spaces of holomorphic functions in $\D$, we refer the interested reader to \cite{siskakis_review} and references thereafter. 
	
	In the present article, we will deal with the case when $X$ is either the \textit{Dirichlet space} $\mathcal{D}$, or a \textit{(weighted) Bergman space} $A_\alpha^p$, $p\ge1$, $\alpha>-1$, or a \textit{Hardy space} $H^p$, $p\ge1$; detailed information on these spaces follows in Subsection \ref{sub:banachspaces}. In the case of the Dirichlet space, every composition operator induced by a univalent symbol is bounded, while in the Bergman and Hardy spaces \textit{every} composition operator is bounded. In addition, by \cite[Theorem 3.4]{berksonporta}, \cite[Theorem 1]{Siskakis-Bergman}, and \cite[Theorem 1]{Siskakis-Dirichlet} we know that any semigroup of composition operators acting on one of the spaces above is \textit{strongly continuous} (also found as a $C_0$\textit{-semigroup} in the literature) which means that 
	$$\lim\limits_{t\to0^+}||T_t(f)-f||_X=0, \quad\textup{for all }f\in X,$$
	where $||\cdot||_X$ is the norm with respect to the Banach space $X=\mathcal{D},A_\alpha^p,H^p$.
	
	A straightforward aspect of strongly continuous semigroups of composition operators is their \textit{infinitesimal generators}. Consider the set 
	\begin{equation*}
		D_X:=\left\{f\in X: \textup{ there exists }g_f\in X \textup{ with } \lim\limits_{t\to0^+}\left|\left|\dfrac{T_t(f)-f}{t}-g_f\right|\right|_X=0\right\},
	\end{equation*}
	which is dense in $X$. Then, the infinitesimal generator $\Gamma_X$ of $(T_t)$ is the unique (and in general unbounded) operator with $\Gamma_X(f)=g_f=\lim_{t\to0^+}[(T_t(f)-f)/t]$, $f\in X$. Infinitesimal generators of strongly continuous semigroups and their properties have been exhaustively studied throughout the past few years; see e.g. \cite{Chalendar, Gallardo_siskakis, Gallardo_Yakubovich, Reich_Shoikhet}. Among the problems frequently explored in the literature, concerning the theory of strongly continuous semigroups of composition operators, has been the determination of the point spectrum of the infinitesimal generator and its properties. The \textit{point spectrum} of $\Gamma_X$ is defined as the set of eigenvalues of $\Gamma_X$, that is the set of all complex numbers $\lambda$ such that $\Gamma_X(f)=\lambda f$ has non-trivial solutions $f\in X$. For the sake of simplicity, we will denote this point spectrum by $\sigma_X$.
	
	The main objective of the present work is to inspect the set $\sigma_X$ depending on certain properties of the initial holomorphic semigroup $(\phi_t)$ inducing $(T_t)$. Continuous holomorphic semigroups in $\D$ have a strong dynamical property: unless $\phi_{t_0}$ is a conformal automorphism of $\D$ with a fixed point in $\D$, for some $t_0>0$, there exists a unique point $\tau \in \overline{\D}$ such that $\phi_t(z)\xrightarrow{t\to+\infty}\tau$, for all $z\in\D$. This point $\tau$ is called the \textit{Denjoy--Wolff point} of $(\phi_t)$. If $\tau \in \D$, then the semigroup $(\phi_t)$ is called \textit{elliptic}, whereas if it lies on the unit circle we say that the semigroup is \textit{non-elliptic}. In the case where $\tau \in \partial \D$ and the angular derivative $\phi_1^{\prime}(\tau)\in(0,1)$, the semigroup $(\phi_t)$ is called \textit{hyperbolic}, while if $\phi_1^{\prime}(\tau)=1$, $(\phi_t)$ is called \textit{parabolic}. We will only work with non-elliptic semigroups because whenever the Denjoy--Wolff point lies on $\partial\D$, the dynamic behavior of the semigroup presents more intricacies and renders its investigation more interesting from a geometric perspective. For each non-elliptic semigroup $(\phi_t)$ there exists a unique (up to translation), univalent mapping $h:\D\to\C$ such that
	\begin{equation}\label{eq:koenigs}
		h(\phi_t(z))=h(z)+t, \quad \textup{for all }t\ge0\textup{ and all }z\in\D.
	\end{equation}
	This function is called the \textit{Koenigs function} of $(\phi_t)$ and the simply connected domain $\Omega:=h(\D)$ its \textit{Koenigs domain}. Our goal is to inspect the point spectrum $\sigma_X$ with respect to the type of the semigroup $(\phi_t)$ and the intrinsic geometric properties of $\Omega$. For this reason, we will use a function that fully describes this geometry, the \textit{defining function} of $(\phi_t)$. Given a non-elliptic semigroup $(\phi_t)$ in $\D$, its defining function is the unique upper semi-continuous function $\psi:I\to [-\infty,+\infty)$, where $I\subseteq\mathbb{R}$ is an open interval, such that the Koenigs domain $\Omega$ can be parametrized as 
	\begin{equation}\label{eq:defining function}
		\Omega=\{x+iy:y\in I, \; x>\psi(y)\}.
	\end{equation}
	This function was introduced in \cite[Definition 2.3]{BGGY}. In particular, excluding the case $I=\mathbb{R}$ and $\psi\equiv -\infty$ which results in the whole complex plane, the authors prove in \cite[Proposition 2.2]{BGGY} that there exists a one-to-one correspondence between non-elliptic Koenigs domains $\Omega$ and upper semi-continuous functions $\psi$ defined on open intervals. Therefore, the defining function conceals the complete nature of $\Omega$. On that account, henceforth we will denote the defining function of $(\phi_t)$ by $\psi_\Omega$ to emphasize its inextricable connection to the Koenigs domain.

	\hiddennumberedsubsection{Statement of Results}
	
	We start with hyperbolic semigroups and the Dirichlet space aiming to find results akin to those in \cite[Theorem 3]{Betsakos_Bergman} and \cite[Theorem 1]{betsakos_eig} for the Bergman and Hardy spaces, respectively. In his works, Betsakos obtained a complete characterization of the point spectrum only taking under consideration the width of the smallest horizontal strip containing $\Omega$ and the width of the largest horizontal strip contained inside $\Omega$. Hence, a multitude of hyperbolic semigroups have exactly the same point spectrum in Bergman and Hardy spaces, without taking into account the exact geometry of $\Omega$. For the Dirichlet space, the situation is much different. We will see that the widths of the aforementioned strips do not factor in the point spectrum. On the contrary, the total geometry of the Koenigs domain counts, presenting a firmer connection with the corresponding point spectrum. Our first main result is the following:
	\begin{theorem}\label{thm:intro defining function}
		Let $(\phi_t)$ be a hyperbolic semigroup in $\D$ which induces the semigroup of composition operators $(T_t)$. Suppose that $\Omega$ is the Koenigs domain of $(\phi_t)$ and $\psi_\Omega:(a,b)\to[-\infty,+\infty)$ its defining function, where $-\infty<a<b<+\infty$. If $\sigma_\mathcal{D}$ is the point spectrum of the infinitesimal generator of $(T_t)$ when acting on the Dirichlet space, then:
		\begin{enumerate}
			\item[\textup{(a)}] It holds that $\lambda\in\sigma_\mathcal{D}\setminus\{0\}$ if and only if $\int_{a}^{b}e^{2\mathrm{Re}\lambda\psi_\Omega(y)}dy<+\infty$.
		\end{enumerate}
		Set $c_\Omega=\inf\{c\in(-\infty,0]:\int_{a}^{b}e^{2c\psi_\Omega(y)}dy<+\infty\}$. Then:
		\begin{enumerate}
			\item[\textup{(b)}]  If $c_\Omega=0$, then $\sigma_\mathcal{D}=\{0\}$.
			\item[\textup{(c)}] If $c_\Omega\in(-\infty,0)$ and $\int_{a}^{b}e^{2c_\Omega\psi_\Omega(y)}dy=+\infty$, then $\sigma_\mathcal{D}=\{\lambda\in\C:c_\Omega<\mathrm{Re}\lambda<0\}\cup\{0\}$.
			\item[\textup{(d)}] If $c_\Omega\in(-\infty,0)$ and $\int_{a}^{b}e^{2c_\Omega\psi_\Omega(y)}dy<+\infty$, then $\sigma_\mathcal{D}=\{\lambda\in\C:c_\Omega\le\mathrm{Re}\lambda<0\}\cup\{0\}$.
			\item[\textup{(e)}] If $c_\Omega=-\infty$, then $\sigma_\mathcal{D}=\{\lambda\in\C:-\infty<\mathrm{Re}\lambda<0\}\cup\{0\}$.
		\end{enumerate}
	\end{theorem}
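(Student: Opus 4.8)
The plan is to turn the eigenvalue problem into a geometric integrability condition on the Koenigs domain and then read off the shape of $\sigma_\mathcal{D}$ from the defining function. First I would identify the eigenfunctions. Differentiating the Koenigs relation \eqref{eq:koenigs} in $t$ at $t=0$ shows that the holomorphic semigroup has generator $G=1/h'$, so the generator of $(T_t)$ acts by $\Gamma_\mathcal{D}(f)=f'/h'$ on its domain, and the eigenvalue equation $\Gamma_\mathcal{D}(f)=\lambda f$ becomes the linear ODE $f'=\lambda h' f$ on the simply connected domain $\D$. Setting $g:=f e^{-\lambda h}$ gives $g'\equiv 0$, so the solutions are exactly the constant multiples of $f_\lambda:=e^{\lambda h}$; the solution space is one dimensional. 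I would make the correspondence rigorous in both directions through the identity $T_t f_\lambda=e^{\lambda t}f_\lambda$, which is immediate from \eqref{eq:koenigs}: if $f_\lambda\in\mathcal{D}$, then $(T_t f_\lambda-f_\lambda)/t=\tfrac{e^{\lambda t}-1}{t}f_\lambda\to\lambda f_\lambda$ in norm, so $f_\lambda$ lies in the domain of the generator and is an eigenfunction, while conversely any eigenfunction is a nonzero multiple of $f_\lambda$ and forces $f_\lambda\in\mathcal{D}$. For $\lambda=0$ we get $f_0\equiv 1\in\mathcal{D}$, so $0\in\sigma_\mathcal{D}$ always, and for $\lambda\ne0$ everything reduces to deciding when $e^{\lambda h}\in\mathcal{D}$.

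Since the Dirichlet integral is invariant under univalent changes of variables and $h$ maps $\D$ biholomorphically onto $\Omega$, membership of $f_\lambda$ in $\mathcal{D}$ is governed (its value at $0$ being finite) by the finiteness of
\[
\int_\D|f_\lambda'|^2\,dA=|\lambda|^2\int_\D|h'|^2 e^{2\mathrm{Re}(\lambda h)}\,dA=|\lambda|^2\int_\Omega e^{2\mathrm{Re}(\lambda w)}\,dA(w).
\]
Writing $w=x+iy$ and using \eqref{eq:defining function} together with $\Omega+t\subseteq\Omega$ for $t\ge0$ (which is exactly why each horizontal slice of $\Omega$ is a ray $(\psi(y),+\infty)$), Tonelli's theorem gives the iterated integral $\int_a^b e^{-2\mathrm{Im}\lambda\,y}\big(\int_{\psi(y)}^{+\infty}e^{2\mathrm{Re}\lambda\,x}\,dx\big)\,dy$. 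Because $\Omega$ is unbounded to the right, the inner integral is infinite unless $\mathrm{Re}\lambda<0$; this already rules out every $\lambda$ with $\mathrm{Re}\lambda\ge0$, $\lambda\ne 0$, as an eigenvalue. For $\mathrm{Re}\lambda<0$ the inner integral equals $(-2\mathrm{Re}\lambda)^{-1}e^{2\mathrm{Re}\lambda\,\psi(y)}$, so $f_\lambda\in\mathcal{D}$ iff $\int_a^b e^{-2\mathrm{Im}\lambda\,y}e^{2\mathrm{Re}\lambda\,\psi(y)}\,dy<+\infty$. Here the hyperbolicity hypothesis $-\infty<a<b<+\infty$ is essential: on a bounded interval the factor $e^{-2\mathrm{Im}\lambda\,y}$ is bounded above and below by positive constants and may be dropped without affecting convergence, yielding precisely the criterion $\int_a^b e^{2\mathrm{Re}\lambda\,\psi(y)}\,dy<+\infty$ of part (a).

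For parts (b)--(e) I would exploit that this criterion depends on $\lambda$ only through $c:=\mathrm{Re}\lambda$, so $\sigma_\mathcal{D}\setminus\{0\}$ is a union of vertical lines, and it suffices to describe $A:=\{c<0:F(c)<+\infty\}$ with $F(c):=\int_a^b e^{2c\psi(y)}\,dy$. The key is a monotonicity estimate showing $A$ is an interval abutting $0$: splitting over $\{\psi\ge0\}$ (where $e^{2c\psi}\le1$ is harmlessly bounded on the finite base interval) and over $\{\psi<0\}$ (where $c_0<c<0$ gives $e^{2c\psi}\le e^{2c_0\psi}$), one obtains $F(c)\le(b-a)+F(c_0)$, so $F(c_0)<\infty$ forces $F(c)<\infty$ for all $c\in(c_0,0)$. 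Hence $A=(c_\Omega,0)$ or $A=[c_\Omega,0)$ with $c_\Omega=\inf A$, and the four cases follow by inspecting $c_\Omega$: if no negative $c$ works, then $c_\Omega=0$ and $\sigma_\mathcal{D}=\{0\}$ (case (b)); if $c_\Omega\in(-\infty,0)$, the left endpoint is excluded or included exactly according to whether $F(c_\Omega)=+\infty$ or $F(c_\Omega)<+\infty$, giving the open boundary of (c) and the closed boundary of (d); and if $F(c)<\infty$ for every $c<0$, then $c_\Omega=-\infty$ and $A=(-\infty,0)$ (case (e)). Re‑attaching the eigenvalue $0$ in each case produces (b)--(e).

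The computations above are routine once the framework is in place; the genuinely delicate points, and where I would spend the most care, are twofold. The first is making the equivalence $\lambda\in\sigma_\mathcal{D}\iff e^{\lambda h}\in\mathcal{D}$ fully rigorous, in particular verifying that membership in $\mathcal{D}$ implies membership in the \emph{a priori} smaller domain of the unbounded generator (handled cleanly by $T_t f_\lambda=e^{\lambda t}f_\lambda$). The second is the endpoint analysis separating (c) from (d), which rests entirely on the monotonicity lemma and on the finiteness of $(a,b)$; along the way one must also address measurability of $\psi$ and the possibility that $\psi(y)=-\infty$ on a set of positive measure, which simply forces $F(c)=+\infty$ for every $c<0$ and hence lands in case (b), but otherwise causes no trouble.
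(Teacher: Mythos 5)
Your proposal is correct and follows essentially the same route as the paper: reduce to the membership $e^{\lambda h}\in\mathcal{D}$, change variables to $\int_\Omega e^{2\mathrm{Re}(\lambda w)}\,dA(w)$, integrate first in $x$ along the rays $(\psi_\Omega(y),+\infty)$, discard the factor $e^{-2\mathrm{Im}\lambda y}$ (bounded above and below because $(a,b)$ is finite), and then obtain (b)--(e) from the vertical-translation invariance of $\sigma_\mathcal{D}\setminus\{0\}$ together with the definition of $c_\Omega$. Your explicit monotonicity estimate $F(c)\le(b-a)+F(c_0)$ for $c_0<c<0$, which shows that $\{c<0:F(c)<+\infty\}$ is an interval abutting $0$, is a step the paper uses only implicitly when it asserts that every $\lambda$ with $\mathrm{Re}\lambda\in(c_\Omega,0)$ satisfies the integral condition, so making it explicit is a small but genuine improvement in rigor rather than a different approach.
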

	
	Through Theorem \ref{thm:intro defining function}, we obtain a complete characterization of the point spectrum. In fact, the point spectrum excluding the eigenvalue $0$ is either the left half-plane, or a vertical strip, or empty. However, in certain cases, the defining function can be extremely complicated rendering the actual calculation of the integrals above difficult. So, using a different parametrization of $\Omega$, we will also prove a more handy, but this time partial result, which enables us to easily calculate the point spectrum in the Dirichlet space for a wide class of hyperbolic semigroups, making its relation with the geometry of $\Omega$ more transparent. Let $(\phi_t)$ be a hyperbolic semigroup with Koenigs domain $\Omega$. For $x\in\mathbb{R}$ set $\Omega_x:=\{y\in\mathbb{R}:x+iy\in\Omega\}$. Denote by $\ell_\Omega(x)$ the Euclidean length of $\Omega_x$. Due to \cite[Theorem 9.3.5]{Booksem}, we know that $\Omega$ is contained in a minimum horizontal strip $\Sigma$, and thus $\ell_\Omega(x)$ is uniformly bounded from above by the width of $\Sigma$. For the statement of our result we will need some notation. Set 
	\begin{equation}\label{eq:intro hyperbolic dirichlet}
		W(\Omega):=\int\limits_{-\infty}^{0}\ell_\Omega(x)dx=\textup{Area}(\Omega\cap\{w\in\C:\mathrm{Re}w<0\}) \quad \textup{and} \quad\delta_\Omega:=\lim\limits_{x\to-\infty}\dfrac{\log\ell_\Omega(x)}{2x},
	\end{equation}
	assuming that the latter limit exists. The two quantities defined in \eqref{eq:intro hyperbolic dirichlet} are a measure of the area of $\Omega$ and the speed at which the boundary $\partial\Omega$ diverges from $\partial \Sigma$, as $x\to-\infty$. As a matter of fact, there are examples where the limit $\delta_\Omega$ does not exist; see Example \ref{ex:limit does not exist}. In such cases, the boundary $\partial\Omega$ moves away from $\partial\Sigma$ in a more pathological way. Our second result for the point spectrum $\sigma_\mathcal{D}$ in the Dirichlet space, induced by a hyperbolic semigroup, is the following:
	
	\begin{theorem}\label{thm:intro hyperbolic dirichlet}
		Let $(\phi_t)$ be a hyperbolic semigroup in $\D$ which induces the semigroup of composition operators $(T_t)$. Suppose that $\Omega$ is the Koenigs domain of $(\phi_t)$ and $\sigma_\mathcal{D}$ the point spectrum of the infinitesimal generator of $(T_t)$. Then:
		\begin{enumerate}
			\item[\textup{(a)}] If $W(\Omega)=+\infty$, then $\sigma_{\mathcal{D}}=\{0\}$.
			\item[\textup{(b)}] If $W(\Omega)<+\infty$ and $\delta_\Omega=+\infty$, then $\sigma_{\mathcal{D}}=\{\lambda\in\C:\mathrm{Re}\lambda<0\}\cup\{0\}$.
			\item[\textup{(c)}] If $W(\Omega)<+\infty$, $\delta_\Omega\in(0,+\infty)$, and $\int_{-\infty}^{0}e^{-2\delta_\Omega x}\ell_\Omega(x)dx<+\infty$, then
			$$\sigma_{\mathcal{D}}=\{\lambda\in\C:-\delta_\Omega\le\mathrm{Re}\lambda<0\}\cup\{0\}.$$
			\item[\textup{(d)}] If $W(\Omega)<+\infty$, $\delta_\Omega\in(0,+\infty)$, and $\int_{-\infty}^{0}e^{-2\delta_\Omega x}\ell_\Omega(x)dx=+\infty$, then
			$$\sigma_{\mathcal{D}}=\{\lambda\in\C:-\delta_\Omega<\mathrm{Re}\lambda<0\}\cup\{0\}.$$
			\item [\textup{(e)}] If $W(\Omega)<+\infty$ and $\delta_\Omega=0$, then $\sigma_{\mathcal{D}}=\{0\}$.
		\end{enumerate}
	\end{theorem}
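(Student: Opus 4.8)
The plan is to deduce Theorem \ref{thm:intro hyperbolic dirichlet} directly from Theorem \ref{thm:intro defining function} by passing from the ``horizontal'' description of $\Omega$ through its defining function $\psi_\Omega$ to the ``vertical'' description through the slice-length function $\ell_\Omega$. The bridge between the two is a single Fubini/layer-cake identity. Since for every $x\in\mathbb{R}$ we have $\Omega_x=\{y\in(a,b):\psi_\Omega(y)<x\}$, the function $\ell_\Omega$ is nondecreasing (hence measurable), and integrating $e^{2cx}$ first in $x$ over each ray $(\psi_\Omega(y),+\infty)$ gives, for every $c<0$,
\begin{equation*}
\int_{a}^{b}e^{2c\psi_\Omega(y)}\,dy=2|c|\int_{-\infty}^{+\infty}e^{2cx}\ell_\Omega(x)\,dx .
\end{equation*}
Because $\Omega$ lies in a horizontal strip of width $b-a$ by \cite[Theorem 9.3.5]{Booksem}, the tail $\int_{0}^{+\infty}e^{2cx}\ell_\Omega(x)\,dx$ is finite for every $c<0$, so finiteness of the left-hand side is equivalent to finiteness of $\int_{-\infty}^{0}e^{2cx}\ell_\Omega(x)\,dx$. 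Combined with part (a) of Theorem \ref{thm:intro defining function}, this shows that for $\lambda$ with $\mathrm{Re}\lambda=c<0$ one has $\lambda\in\sigma_{\mathcal{D}}$ if and only if $\int_{-\infty}^{0}e^{2cx}\ell_\Omega(x)\,dx<+\infty$, reducing the whole problem to the behaviour of $\ell_\Omega$ as $x\to-\infty$.

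Next I would re-express the critical abscissa $c_\Omega$ of Theorem \ref{thm:intro defining function} in terms of $\delta_\Omega$. By the equivalence just established, $c_\Omega=\inf\{c<0:\int_{-\infty}^{0}e^{2cx}\ell_\Omega(x)\,dx<+\infty\}$. Writing the defining limit $\delta_\Omega=\lim_{x\to-\infty}\log\ell_\Omega(x)/(2x)$ as two-sided bounds $e^{2(\delta_\Omega+\varepsilon)x}\le\ell_\Omega(x)\le e^{2(\delta_\Omega-\varepsilon)x}$ valid for $x\le-M_\varepsilon$ (the usual inequalities reversing because $x<0$), a comparison with $\int e^{2(c+\delta_\Omega\pm\varepsilon)x}\,dx$ shows that $\int_{-\infty}^{0}e^{2cx}\ell_\Omega(x)\,dx$ converges for $c>-\delta_\Omega$ and diverges for $c<-\delta_\Omega$. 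Hence $c_\Omega=-\delta_\Omega$ when $\delta_\Omega\in(0,+\infty)$, while the same estimates give $c_\Omega=-\infty$ when $\delta_\Omega=+\infty$ and $c_\Omega=0$ when $\delta_\Omega=0$. For the borderline regimes I would in addition apply the identity of the first paragraph at $c=c_\Omega=-\delta_\Omega$: it turns the condition $\int_{a}^{b}e^{2c_\Omega\psi_\Omega(y)}\,dy<+\infty$ of Theorem \ref{thm:intro defining function} into exactly the condition $\int_{-\infty}^{0}e^{-2\delta_\Omega x}\ell_\Omega(x)\,dx<+\infty$ appearing in parts (c) and (d).

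I would then match the five cases. When $W(\Omega)=+\infty$, the bound $e^{2cx}\ge1$ for $x\le0$ gives $\int_{-\infty}^{0}e^{2cx}\ell_\Omega(x)\,dx\ge W(\Omega)=+\infty$ for every $c<0$, so no $c<0$ makes the integral finite, i.e. $c_\Omega=0$, and part (b) of Theorem \ref{thm:intro defining function} yields $\sigma_{\mathcal{D}}=\{0\}$; this is case (a). Case (e), $\delta_\Omega=0$, also gives $c_\Omega=0$ and the same conclusion. Case (b), $\delta_\Omega=+\infty$, gives $c_\Omega=-\infty$ and falls under part (e) of Theorem \ref{thm:intro defining function}. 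Finally, for $\delta_\Omega\in(0,+\infty)$ we have $c_\Omega=-\delta_\Omega$, and the borderline integral decides whether the critical line $\mathrm{Re}\lambda=-\delta_\Omega$ lies in $\sigma_{\mathcal{D}}$: finiteness places us in part (d) of Theorem \ref{thm:intro defining function}, giving our case (c) (closed on the left), while divergence places us in part (c) of Theorem \ref{thm:intro defining function}, giving our case (d) (open on the left). In every case $0\in\sigma_{\mathcal{D}}$ is inherited from Theorem \ref{thm:intro defining function}.

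I expect the main obstacle to be technical rather than conceptual: carefully justifying the interchange of integration order in the bridging identity when $\psi_\Omega=-\infty$ on a set of positive measure (which forces $W(\Omega)=+\infty$ and is therefore consistent with case (a)), and tracking the reversed inequalities coming from $x<0$ in the $\varepsilon$-estimates for $\delta_\Omega$ without sign errors, especially at the critical abscissa, where the convergence of $\int_{-\infty}^{0}e^{-2\delta_\Omega x}\ell_\Omega(x)\,dx$ must be read off precisely to separate cases (c) and (d).
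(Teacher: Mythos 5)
Your proposal is correct and follows essentially the same route as the paper: the bridging Fubini identity you derive from Theorem \ref{thm:intro defining function}(a) is precisely the paper's Lemma \ref{lm:hyperbolic x parametrization} (which the paper obtains by slicing $\int_\Omega|e^{2\lambda w}|\,dA(w)$ vertically rather than horizontally), and your $\varepsilon$-estimates identifying $c_\Omega=-\delta_\Omega$ are the same comparisons the paper carries out case by case. The only difference is organizational — you compute $c_\Omega$ once and then cite Theorem \ref{thm:intro defining function}(b)--(e), whereas the paper reruns the comparison argument in each of the five cases — and your sign-handling at the critical abscissa and in the $W(\Omega)=+\infty$ and $\delta_\Omega\in\{0,+\infty\}$ regimes is accurate.
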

	If the limit $\delta_\Omega$ does not exist (something that can happen as seen in an explicit example later on), then taking the limit inferior and the limit superior, similar techniques provide useful inclusions for the point spectrum. As a result, in general, Theorem \ref{thm:intro hyperbolic dirichlet} provides a ``blueprint'' through which one knows ``where to look'' in order to explicitly compute the point spectrum.
	
	We move on to parabolic semigroups and begin with a distinction among the class of parabolic semigroups. Let $(\phi_t)$ be parabolic with Koenigs domain $\Omega$. If $\Omega$ is contained in a horizontal half-plane, we say that $(\phi_t)$ is \textit{parabolic of positive hyperbolic step}. If not, then $(\phi_t)$ is called \textit{parabolic of zero hyperbolic step}. Contrary to the hyperbolic case, we will now examine the point spectrum under the scope of the angular sectors containing $\Omega$ or contained inside $\Omega$. Given $a,b\in[-\pi,\pi]$ with $a<b$, we set $S(a,b)=\{w\in\C:a<\arg w<b\}$, $S[a,b)=\{w\in\C: a\le \arg w<b\}$, and $S(a,b]=\{w\in\C: a<\arg w\le b\}$. Suppose that $(\phi_t)$ is parabolic of positive hyperbolic step. Then, through a translation, we can always assume that $\Omega$ is contained in the usual upper half-plane $S(0,\pi)$. In this case, we introduce two intrinsic geometric quantities that describe the shape of $\Omega$. First, the \textit{inner argument} of $(\phi_t)$ is defined as
	\begin{equation}\label{eq:inner argument}
		\theta_\Omega:=\sup\left\{\{0\}\cup\{\theta\in(0,\pi]: \textup{there exists }q_\theta\in\C \textup{ such that }q_\theta+S(0,\theta)\subseteq\Omega\}\right\}.
	\end{equation}
	In a similar vein, the \textit{outer argument} of $(\phi_t)$ is
	\begin{equation}\label{eq:outer argument}
		\Theta_\Omega:=\inf\{\theta\in(0,\pi]: \textup{there exists }q_\theta\in\C \textup{ such that }\Omega\subseteq q_\theta+S(0,\theta)\}.
	\end{equation}
	Clearly $\theta_\Omega\le\Theta_\Omega$. For more information in these quantities, we refer to \cite[Section 3]{our_finiteshift}. Our main result is the following:
	
	\begin{theorem}\label{thm:intro phs dirichlet}
		Let $(\phi_t)$ be a parabolic semigroup of positive hyperbolic step in $\D$ which induces the semigroup of composition operators $(T_t)$. Suppose that $\Omega$ is the Koenigs domain of $(\phi_t)$ and that $\sigma_\mathcal{D}$ is the point spectrum of the infinitesimal generator of $(T_t)$. Then:
		\begin{enumerate}
			\item[\textup{(a)}] If $\theta_\Omega=\Theta_\Omega=0$, then $\sigma_\mathcal{D}=iS(0,\pi)\cup\{0\}$.
			\item[\textup{(b)}] If $\theta_\Omega=\Theta_\Omega\in(0,\pi)$, then $iS(0,\pi-\theta_\Omega)\cup\{0\}\subseteq\sigma_\mathcal{D}\subseteq iS(0,\pi-\theta_\Omega]\cup\{0\}$.
			\item[\textup{(c)}] If $\theta_\Omega=\Theta_\Omega=\pi$, then $\sigma_\mathcal{D}=\{0\}$.
			\item[\textup{(d)}] If $\theta_\Omega<\Theta_\Omega$, then $iS(0,\pi-\Theta_\Omega)\cup\{0\}\subseteq\sigma_\mathcal{D}\subseteq iS(0,\pi-\theta_\Omega]\cup\{0\}$. In case $\Theta_\Omega=\pi$, the set on the left becomes $\{0\}$, while if $\theta_\Omega=0$, the set on the right becomes $iS(0,\pi)\cup\{0\}$.
		\end{enumerate}
	\end{theorem}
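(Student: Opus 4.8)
The plan is to reduce the eigenvalue problem to a single convergence criterion for an area integral over $\Omega$, and then to read off the answer from the monotonicity of that integral under inclusions of sectors together with the definitions \eqref{eq:inner argument} and \eqref{eq:outer argument}.

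First I would recall the reduction used throughout. Differentiating Abel's equation \eqref{eq:koenigs} at $t=0$ gives $h'(z)G(z)=1$, where $G$ is the infinitesimal generator of $(\phi_t)$, and the generator of $(T_t)$ acts by $\Gamma_{\mathcal{D}}(f)=G\cdot f'$. Hence $\Gamma_{\mathcal{D}}(f)=\lambda f$ is the linear ODE $f'=\lambda h' f$, whose nontrivial solutions are the constant multiples of $f_\lambda:=e^{\lambda h}$. Thus $0\in\sigma_{\mathcal{D}}$ always (constants), and for $\lambda\neq0$ we have $\lambda\in\sigma_{\mathcal{D}}$ if and only if $e^{\lambda h}\in\mathcal{D}$, i.e. if and only if $\int_{\D}|f_\lambda'|^2\,dA<+\infty$. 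Since $f_\lambda'=\lambda h' e^{\lambda h}$, the change of variables $w=h(z)$ (with $dA(w)=|h'(z)|^2\,dA(z)$, $h$ univalent) gives
\begin{equation*}
\int_{\D}|f_\lambda'|^2\,dA=|\lambda|^2\int_\Omega e^{2\mathrm{Re}(\lambda w)}\,dA(w).
\end{equation*}
Everything therefore reduces to deciding when $I_\Omega(\lambda):=\int_\Omega e^{2\mathrm{Re}(\lambda w)}\,dA(w)$ is finite. Because $\Omega$ is right-invariant by \eqref{eq:koenigs} (so it contains a horizontal ray to the right), $I_\Omega(\lambda)=+\infty$ unless $\mathrm{Re}\lambda<0$; hence $\sigma_{\mathcal{D}}\setminus\{0\}\subseteq\{\mathrm{Re}\lambda<0\}=iS(0,\pi)$.

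Next I would carry out the key computation on a sector. Writing $\lambda=|\lambda|e^{i\beta}$ and using polar coordinates on $S(0,\theta)$,
\begin{equation*}
\int_{S(0,\theta)}e^{2\mathrm{Re}(\lambda w)}\,dA=\int_0^\theta\!\!\int_0^\infty e^{2r|\lambda|\cos(\alpha+\beta)}\,r\,dr\,d\alpha,
\end{equation*}
the inner integral is finite exactly when $\cos(\alpha+\beta)<0$, in which case it equals a constant times $1/\cos^2(\alpha+\beta)$. A short analysis of the remaining $\int_0^\theta d\alpha/\cos^2(\alpha+\beta)$ shows the double integral is finite precisely when $\cos(\alpha+\beta)$ stays bounded away from $0$ on $[0,\theta]$, that is, when $\beta\in(\pi/2,3\pi/2-\theta)$; at the endpoints one meets a nonintegrable $1/\alpha^2$ singularity and the integral diverges. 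Translating the sector only multiplies the integral by a finite constant, so in the notation of the statement this reads
\begin{equation*}
\int_{q+S(0,\theta)}e^{2\mathrm{Re}(\lambda w)}\,dA<+\infty\iff \lambda\in iS(0,\pi-\theta).
\end{equation*}
Finally I would combine this with positivity of the integrand (hence monotonicity of $I_\Omega$ under inclusion) and the definitions of $\theta_\Omega,\Theta_\Omega$. For every $\theta<\theta_\Omega$ there is a translate $q_\theta+S(0,\theta)\subseteq\Omega$, so $\lambda\in\sigma_{\mathcal{D}}$ forces the sector integral to converge, giving $\sigma_{\mathcal{D}}\setminus\{0\}\subseteq iS(0,\pi-\theta)$; intersecting over $\theta<\theta_\Omega$ yields the closed upper bound $iS(0,\pi-\theta_\Omega]$. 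Dually, for every $\theta>\Theta_\Omega$ there is a translate with $\Omega\subseteq q_\theta+S(0,\theta)$, so convergence of the sector integral forces $I_\Omega(\lambda)<+\infty$; the union over $\theta>\Theta_\Omega$ gives the open lower bound $iS(0,\pi-\Theta_\Omega)$. This is exactly (d), and (b) is the coincidence case $\theta_\Omega=\Theta_\Omega$. For the extremes the sector comparison degenerates: when $\theta_\Omega=\Theta_\Omega=\pi$ the intersection of the $iS(0,\pi-\theta)$ as $\theta\uparrow\pi$ is empty, so $\sigma_{\mathcal{D}}=\{0\}$, which is (c); when $\theta_\Omega=\Theta_\Omega=0$ the inner-sector bound is vacuous and I would use the already-established necessary condition $\mathrm{Re}\lambda<0$ for the upper bound, while $\Theta_\Omega=0$ lets me fit $\Omega$ into arbitrarily thin sectors, whose union of spectra exhausts the left half-plane, giving (a).

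The hard part will be Step two together with the bookkeeping of open versus closed inclusions. The convergence threshold is delicate exactly on the boundary rays $\arg\lambda=\pi/2$ and $\arg\lambda=3\pi/2-\theta$, where one must identify the $1/\alpha^2$-type singularity to conclude divergence; this is what forces the \emph{open} interval $(\pi/2,3\pi/2-\theta)$. Propagating it through an intersection over inner angles and a union over outer angles is precisely what creates the asymmetry between the closed outer bound $iS(0,\pi-\theta_\Omega]$ and the open inner bound $iS(0,\pi-\Theta_\Omega)$, and the degenerate values $\theta_\Omega=0$, $\Theta_\Omega=\pi$ (where one sector family disappears) must be treated separately. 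I do not expect the boundary rays themselves to be decided by this argument — consistent with the statement only asserting the sandwiched inclusions in (b) and (d) — since resolving them requires the finer integral conditions of the defining-function approach of Theorem~\ref{thm:intro defining function}.
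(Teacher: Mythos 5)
Your proposal is correct and follows essentially the same route as the paper: reduce via $e^{\lambda h}\in\mathcal{D}\iff\int_\Omega e^{2\mathrm{Re}(\lambda w)}\,dA<+\infty$ (Proposition \ref{prop:dirichlet spectrum koenigs}), compute the sector case in polar coordinates with the $\sec^2$ endpoint analysis (Lemma \ref{lm:dirichlet standard domain}(c),(d) and Remark \ref{rem:dirichlet sectors}), and sandwich $\sigma_\mathcal{D}$ by monotonicity under inclusion of translated sectors (Lemma \ref{lem:dirichlet spectrum monotonicity}), with the intersection/union bookkeeping producing exactly the closed-versus-open asymmetry in (b) and (d). One small fix: a horizontal \emph{ray} has zero area and does not force $I_\Omega(\lambda)=+\infty$ for $\mathrm{Re}\lambda\ge0$; you need that $\Omega$, being open and convex in the positive direction, contains a horizontal \emph{half-strip} of positive width, which is how the paper's Lemma \ref{lm:phs general} obtains the a priori bound $\sigma_\mathcal{D}\setminus\{0\}\subseteq iS(0,\pi)$.
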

	Of course, if $\Omega$ was contained in any other upper or lower half-plane, our result still holds, albeit with minor obvious modifications.
	
	Finally, for parabolic semigroups of zero hyperbolic step, the Koenigs domain is no longer contained in a horizontal half-plane and we are in need of a generalization of the inner and outer arguments. Once again, through a translation we may always assume that $\Omega\subseteq\C\setminus\{-t:t\ge0\}$. This is because the Koenigs domain satisfies $\Omega+t\subseteq\Omega$, for all $t\ge0$, and must also be a proper subdomain of the whole complex plane, due to the definition of $h$. Clearly the sets $\Omega^-:=\Omega\cap S(-\pi,0)$ and $\Omega^+:=\Omega\cap S(0,\pi)$ are connected and non-empty. Then, we may define the \textit{lower inner argument} of $(\phi_t)$ as
	\begin{equation}\label{eq:lower inner argument}
		\theta_\Omega^-:=\sup\left\{\{0\}\cup\{\theta\in(0,\pi]:\textup{there exists }q_\theta\in\C \textup{ such that }q_\theta+S(-\theta,0)\subseteq\Omega^-\}\right\},
	\end{equation}
	and the \textit{lower outer argument} as
	\begin{equation}\label{eq:lower outer argument}
		\Theta_\Omega^-:=\inf\{\theta\in(0,\pi]:\textup{there exists }q_\theta\in\C \textup{ such that }\Omega^-\subseteq q_\theta+S(-\theta,0)\}.
	\end{equation}

	In similar fashion, using the set $\Omega^+$, we can define the \textit{upper inner argument} $\theta_\Omega^+$ and the \textit{upper outer argument} $\Theta_\Omega^+$; see Figure \ref{fig:Everything/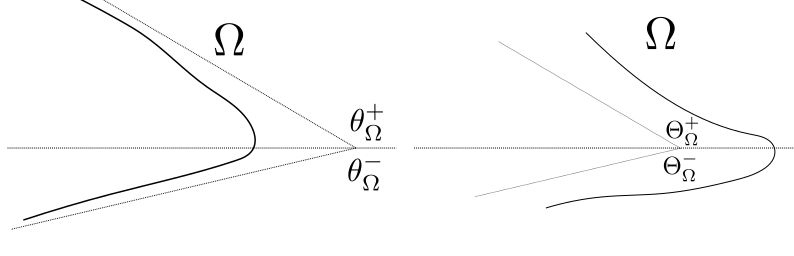}. Through these four numbers, we may generalize the notions of inner and outer arguments. Indeed, for parabolic semigroups of zero hyperbolic step, we may set $\theta_\Omega:=\theta_\Omega^-+\theta_\Omega^+$ and $\Theta_\Omega:=\Theta_\Omega^-+\Theta_\Omega^+$. We will prove the following:
	\begin{figure}[ht]
		\centering
		\includegraphics[width=0.8\linewidth]{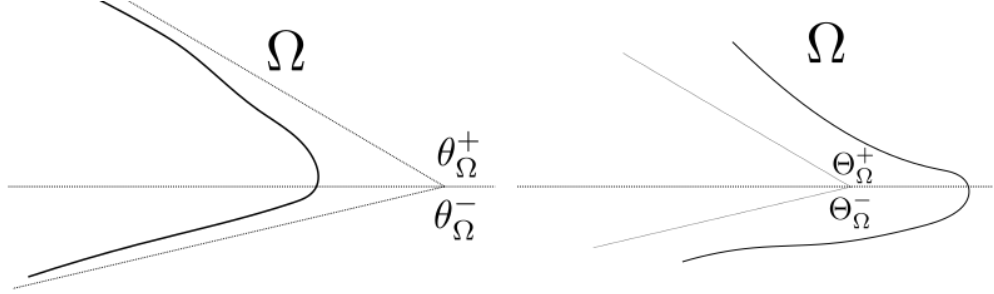}
		\caption{The inner/outer, lower and upper arguments, respectively.}
		\label{fig:Everything/arguments.png}
	\end{figure}
	\begin{theorem}\label{thm:intro 0hs dirichlet}
		Let $(\phi_t)$ be a parabolic semigroup of zero hyperbolic step in $\D$ which induces the semigroup of composition operators $(T_t)$. Suppose that $\Omega$ is the Koenigs domain of $(\phi_t)$ and that $\sigma_\mathcal{D}$ is the point spectrum of the infinitesimal generator of $(T_t)$. Then:
		\begin{enumerate}
			\item[\textup{(a)}] If $\theta_\Omega=\Theta_\Omega=0$, then $\sigma_\mathcal{D}=iS(0,\pi)\cup\{0\}$.
			\item[\textup{(b)}] If $\theta_\Omega=\Theta_\Omega\in(0,\pi)$ (and hence $\theta_\Omega^-=\Theta_\Omega^-$, $\theta_\Omega^+=\Theta_\Omega^+$), then
			$$iS(\theta_\Omega^-,\pi-\theta_\Omega^+)\cup\{0\}\subseteq\sigma_\mathcal{D}\subseteq i\overline{S(\theta_\Omega^-,\pi-\theta_\Omega^+)}.
			$$
			\item[\textup{(c)}] If $\theta_\Omega=\pi$, then $\sigma_\mathcal{D}\subseteq\{ite^{i\theta_\Omega^-}:t\ge0\}$.
			\item[\textup{(d)}] If $\theta_\Omega\in(\pi,2\pi]$, then $\sigma_\mathcal{D}=\{0\}$.
			\item[\textup{(e)}] If $\theta_\Omega<\min\{\pi,\Theta_\Omega\}$, then $iS(\Theta_\Omega^-,\pi-\Theta_\Omega^+)\cup\{0\}\subseteq\sigma_\mathcal{D}\subseteq i\overline{S(\theta_\Omega^-,\pi-\theta_\Omega^+)}$. In case $\Theta_\Omega\ge\pi$, the set on the left becomes $\{0\}$, while if $\theta_\Omega=0$, the set on the right becomes $iS(0,\pi)\cup\{0\}$.
		\end{enumerate}
	\end{theorem}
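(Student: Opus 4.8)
The plan is to reduce the eigenvalue problem to a single integrability condition on $\Omega$ and then read off each case from the sectorial structure. Differentiating \eqref{eq:koenigs} at $t=0$ shows that the infinitesimal generator of $(\phi_t)$ equals $1/h'$, so $\Gamma_\mathcal{D}(f)=\lambda f$ becomes $f'/h'=\lambda f$, whose only solutions are scalar multiples of $f_\lambda:=e^{\lambda h}$. Hence for $\lambda\neq0$ we have $\lambda\in\sigma_\mathcal{D}$ if and only if $f_\lambda\in\mathcal{D}$, i.e. $\int_{\D}|f_\lambda'|^2\,dA<\infty$. Since $h$ is univalent with $h(\D)=\Omega$, the change of variables $w=h(z)$ gives
\begin{equation*}
\int_{\D}|f_\lambda'(z)|^2\,dA(z)=|\lambda|^2\int_{\D}|h'(z)|^2 e^{2\mathrm{Re}(\lambda h(z))}\,dA(z)=|\lambda|^2\int_{\Omega}e^{2\mathrm{Re}(\lambda w)}\,dA(w),
\end{equation*}
so the whole theorem rests on deciding, for $\lambda=\mu+i\nu$ and $w=x+iy$, whether $\int_\Omega e^{2(\mu x-\nu y)}\,dx\,dy<\infty$. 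This is the same characterization underlying the defining-function results, except that for zero hyperbolic step $\Omega$ is unbounded in the $y$-direction, so the factor $e^{-2\nu y}$ survives and $\mathrm{Im}\lambda$ genuinely enters.

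Next I compute the model integral over a sector $S(-\theta^-,\theta^+)$; translating the apex only multiplies the integral by a positive constant, so in polar coordinates $w=re^{i\phi}$ the radial integral $\int_0^\infty e^{2|\lambda|r\cos(\arg\lambda+\phi)}r\,dr$ is finite precisely when $\cos(\arg\lambda+\phi)<0$, and the angular integral is finite exactly when this holds uniformly on $[-\theta^-,\theta^+]$. A short computation shows this is equivalent to $\arg\lambda\in(\tfrac{\pi}{2}+\theta^-,\tfrac{3\pi}{2}-\theta^+)$, i.e. to $\lambda\in iS(\theta^-,\pi-\theta^+)$. In parallel I record the ``tube argument'': since $\Omega+t\subseteq\Omega$, the domain contains a half-infinite horizontal tube on which $e^{2(\mu x-\nu y)}$ is bounded below once $\mu=\mathrm{Re}\lambda\geq0$, so $\mathrm{Re}\lambda\geq0$ with $\lambda\neq0$ forces divergence. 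This yields the universal inclusion $\sigma_\mathcal{D}\setminus\{0\}\subseteq\{\mathrm{Re}\lambda<0\}$, which is exactly what upgrades a closed boundary ray to an open one whenever that ray meets the imaginary axis.

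I then transfer the model computation to $\Omega$ through the inner and outer arguments, working separately on $\Omega^+$ and $\Omega^-$ so that no common apex is needed. For necessity, for every $\theta^\pm<\theta_\Omega^\pm$ the definitions \eqref{eq:lower inner argument} and their upper analogues provide translated sectors $S(0,\theta^+)\subseteq\Omega^+$ and $S(-\theta^-,0)\subseteq\Omega^-$ inside $\Omega$; as the integrand is nonnegative, $\int_\Omega$ dominates each sub-sector integral, and letting $\theta^\pm\to\theta_\Omega^\pm$ gives $\sigma_\mathcal{D}\setminus\{0\}\subseteq i\overline{S(\theta_\Omega^-,\pi-\theta_\Omega^+)}$. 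For sufficiency, for every $\Theta^\pm>\Theta_\Omega^\pm$ the definition \eqref{eq:lower outer argument} and its upper analogue give $\Omega^+\subseteq q_++S(0,\Theta^+)$ and $\Omega^-\subseteq q_-+S(-\Theta^-,0)$, so $\int_\Omega\le\int_{\Omega^+}+\int_{\Omega^-}$ is bounded by two sector integrals, convergent respectively for $\lambda\in iS(0,\pi-\Theta^+)$ and $\lambda\in iS(\Theta^-,\pi)$; intersecting and letting $\Theta^\pm\to\Theta_\Omega^\pm$ yields $iS(\Theta_\Omega^-,\pi-\Theta_\Omega^+)\subseteq\sigma_\mathcal{D}$.

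Finally I assemble the cases. When $\theta_\Omega=\Theta_\Omega$ the two bounds coincide and sandwich $\sigma_\mathcal{D}$: in (a) both boundary rays lie on the imaginary axis and are removed by the tube argument, giving the exact open half-plane; in (b) the boundary rays lie strictly inside $\{\mathrm{Re}\lambda<0\}$ and remain undecided, giving the open/closed sandwich; in (c) the identity $\pi-\theta_\Omega^+=\theta_\Omega^-$ degenerates the sector to the single ray $\{ite^{i\theta_\Omega^-}:t\ge0\}$ and only necessity survives; in (d) the inequality $\theta_\Omega>\pi$ makes $S(\theta_\Omega^-,\pi-\theta_\Omega^+)$ empty, forcing $\sigma_\mathcal{D}=\{0\}$. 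Case (e) is the genuine gap $\theta_\Omega<\Theta_\Omega$, obtained by combining the one-sided bounds, with the stated degenerations when $\Theta_\Omega\ge\pi$ (empty inner sector) or $\theta_\Omega=0$ (tube argument forcing the open half-plane on the right). I expect the main obstacle to be the boundary bookkeeping — deciding ray by ray whether a boundary argument lies in $\sigma_\mathcal{D}$ — together with justifying the one-sided sector containments directly from the argument definitions, where the geometric estimates of \cite{our_finiteshift} are needed; the interior convergence is by contrast a routine polar-coordinate estimate.
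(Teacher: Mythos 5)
Your proposal is correct and follows essentially the same route the paper indicates for this theorem (which it leaves to the reader): reduce via Proposition \ref{prop:dirichlet spectrum koenigs} to the finiteness of $\int_\Omega e^{2\mathrm{Re}(\lambda w)}\,dA(w)$, compute the sector model as in Lemma \ref{lm:dirichlet standard domain}(c)--(d), sandwich $\Omega$ between inner and outer sectors, and exclude the imaginary axis by the half-strip contained in $\Omega$. Your splitting of the integral over $\Omega^+$ and $\Omega^-$ is the right way to handle the fact that the upper and lower outer sectors may have different apexes, and is exactly the adaptation the paper's phrase ``identical techniques'' presupposes; the only very minor quibble is that the appeal to the geometric estimates of \cite{our_finiteshift} is unnecessary, since the sector containments come directly from the definitions \eqref{eq:lower inner argument}--\eqref{eq:lower outer argument} of the arguments as suprema/infima.
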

	
	Although, our results on parabolic semigroups in the setting of the Dirichlet space do not yield a precise characterization of the point spectrum, they do pose as a helpful guide in order to locate it. Note that Theorem \ref{thm:intro phs dirichlet} provides a little more insight than Theorem \ref{thm:intro 0hs dirichlet}, since the inclusions on the right are stricter. This will be the outcome of a ``convexity'' result we will prove about the point spectrum. Besides, due to the \textit{growth bounds} of semigroups of composition operators induced by parabolic semigroups, it is known that the point spectrum lies in the closed left half-plane and hence, our results provide an even stricter inclusion. 
	
	After dealing with the Dirichlet space, we prove the counterparts of Theorem \ref{thm:intro phs dirichlet} and \ref{thm:intro 0hs dirichlet} for the Bergman spaces and the Hardy spaces. Recall that the counterpart of Theorem \ref{thm:intro hyperbolic dirichlet} for these spaces has already been dealt with by Betsakos. As mentioned before, the growth bound of semigroups of composition operators in these spaces yields that the point spectrum lies in the closed left half-plane. In this case, the inclusion is sharp. For the sake of brevity, we write $\sigma_{p,\alpha}:=\sigma_{A_\alpha^p}(\Gamma_{A_\alpha^p})$ and $\sigma_p:=\sigma_{H^p}(\Gamma_{H^p})$. Through the \textit{Littlewood-Paley Formula} and the \textit{Hardy-Stein Formula}, our techniques involve potential theory and allow us to treat the Hardy space $H^p$, $p\ge1$ as an ``extremal'' Bergman space $A_{-1}^p$. As a matter of fact, we prove the following two results:
	
	\begin{theorem}\label{thm:intro bergman phs}
		Let $(\phi_t)$ be a parabolic semigroup of positive hyperbolic step in $\D$ which induces the semigroup of composition operators $(T_t)$. Suppose that $\Omega$ is the Koenigs domain of $(\phi_t)$ and that $\sigma_{p,\alpha}$ is the point spectrum of the infinitesimal generator of $(T_t)$ when acting on the Bergman space $A_\alpha^p$, $p\ge1$, $\alpha>-1$. Then:
		\begin{enumerate}
			\item[\textup{(a)}] If $\theta_\Omega=\Theta_\Omega=0$, then $iS[0,\pi)\cup\{0\}\subseteq\sigma_{p,\alpha}\subseteq i\overline{S(0,\pi)}$.
			\item[\textup{(b)}] If $\theta_\Omega=\Theta_\Omega\in(0,\pi)$, then $iS[0,\pi-\theta_\Omega)\cup\{0\}\subseteq \sigma_{p,\alpha} \subseteq i\overline{S(0,\pi-\theta_\Omega)}$.
			\item[\textup{(c)}] If $\theta_\Omega=\Theta_\Omega=\pi$, then $\sigma_{p,\alpha}=\{it:t\ge0\}$.
			\item[\textup{(d)}] If $\theta_\Omega<\Theta_\Omega$, then $iS[0,\pi-\Theta_\Omega)\cup\{0\}\subseteq\sigma_{p,\alpha}\subseteq i\overline{S(0,\pi-\theta_\Omega)}$. In case $\Theta_\Omega=\pi$, the set on the left becomes the vertical half-line $\{it:t\ge0\}$, while if $\theta_\Omega=0$, the set on the right becomes $i\overline{S(0,\pi)}$.
		\end{enumerate}
		The same results hold for the point spectrum $\sigma_p$ when $(T_t)$ acts on the Hardy space $H^p$, $p\ge1$.
	\end{theorem}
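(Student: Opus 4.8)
The plan is to turn the eigenvalue equation into a membership question for a single explicit function, then into the convergence of an integral over $\Omega$, which I would finally resolve by comparing $\Omega$ with the model angular sectors that define $\theta_\Omega$ and $\Theta_\Omega$. First I would identify the eigenfunctions. Since $(T_t)$ is a $C_0$-semigroup, $\Gamma_{A_\alpha^p}f=\lambda f$ is equivalent to $T_tf=e^{\lambda t}f$, i.e. $f(\phi_t(z))=e^{\lambda t}f(z)$ for all $t\ge0$. Writing $f=g\circ h$ and invoking the Koenigs relation $h\circ\phi_t=h+t$ from \eqref{eq:koenigs}, this turns into $g(w+t)=e^{\lambda t}g(w)$ on $\Omega$; since $g$ is holomorphic and $\Omega+t\subseteq\Omega$, differentiating in $t$ forces $g(w)=c\,e^{\lambda w}$, so the only candidate eigenfunctions are scalar multiples of $e^{\lambda h}$. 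Conversely $T_t(e^{\lambda h})=e^{\lambda(h+t)}=e^{\lambda t}e^{\lambda h}$, so $e^{\lambda h}$ is an eigenfunction as soon as it belongs to the space. Hence, with $\lambda=0$ always present via the constants, $\lambda\in\sigma_{p,\alpha}\setminus\{0\}\iff e^{\lambda h}\in A_\alpha^p$.

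Next I would convert membership into an integral estimate. Applying the Littlewood--Paley/Hardy--Stein identity $\|f\|_{A_\alpha^p}^p\asymp|f(0)|^p+\int_\D|f|^{p-2}|f'|^2(1-|z|^2)^{\alpha+2}\,dA$ to the non-vanishing function $f=e^{\lambda h}$ (so that $|f|^{p-2}$ is harmless for $p\ge1$) and using $(e^{\lambda h})'=\lambda h'e^{\lambda h}$, the factor $|h'|^2$ cancels the conformal Jacobian under $w=h(z)$, leaving
\[
\|e^{\lambda h}\|_{A_\alpha^p}^p\asymp 1+|\lambda|^2\int_\Omega e^{p\RE(\lambda w)}\bigl(1-|h^{-1}(w)|^2\bigr)^{\alpha+2}\,dA(w).
\]
The same computation for $H^p$, with $\log(1/|z|)$ replacing $(1-|z|^2)^{\alpha+2}$, produces the weight $g_\Omega(w,w_0)$ with $w_0=h(0)$, realizing the Hardy space as the ``extremal'' exponent $\alpha=-1$; using $1-|h^{-1}(w)|^2\asymp\log(1/|h^{-1}(w)|)=g_\Omega(w,w_0)$ as $w\to\infty$, in all cases membership is governed by the finiteness of $J_\alpha(\lambda):=\int_\Omega e^{p\RE(\lambda w)}g_\Omega(w,w_0)^{\alpha+2}\,dA(w)$.

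The core is then the model computation and a monotonicity transfer. For $\Omega=S(0,\theta)$ with $0<\theta<\pi$, the map $\zeta=w^{\pi/\theta}$ sends the sector to the upper half-plane, where the Green's function is explicit, giving $g_{S(0,\theta)}(w,w_0)\asymp|w|^{-\pi/\theta}\sin(\pi\arg w/\theta)$ as $|w|\to\infty$. Writing $\RE(\lambda w)=|w|\,\RE(\lambda e^{i\arg w})$ in polar coordinates, the radial integral converges exactly when $\RE(\lambda e^{i\phi})<0$ across the aperture $\phi\in(0,\theta)$, which amounts to $\lambda\in iS(0,\pi-\theta)$. Because Green's functions are monotone under domain inclusion and the integrand is non-negative, the inclusions $q_1+S(0,\theta_\Omega)\subseteq\Omega\subseteq q_2+S(0,\Theta_\Omega)$ yield $J_\alpha^{\mathrm{inner}}\le J_\alpha(\lambda)\le J_\alpha^{\mathrm{outer}}$ (the translations only introduce constant factors $e^{p\RE(\lambda q_j)}$). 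Divergence for the inner sector forces $\lambda\notin\sigma_{p,\alpha}$, giving $\sigma_{p,\alpha}\subseteq i\overline{S(0,\pi-\theta_\Omega)}$, while convergence for the outer sector gives $iS(0,\pi-\Theta_\Omega)\subseteq\sigma_{p,\alpha}$; these collapse to an equality precisely when $\theta_\Omega=\Theta_\Omega$. The degenerate openings are handled as limits: when $\theta_\Omega=0$ there is no inner sector and I fall back on the growth-bound inclusion $\sigma_{p,\alpha}\subseteq i\overline{S(0,\pi)}$, and when $\theta=\pi$ the sector is a half-plane whose spectrum is a single ray. Finally, for $\lambda=it$ with $t\ge0$ one has $|e^{\lambda h}|=e^{-t\IM h}\le1$ since $\Omega$ lies in the upper half-plane, so $e^{\lambda h}\in H^\infty\subseteq A_\alpha^p$; this is why the closed ray $\{it:t\ge0\}$ (the ``$[0$'' in $iS[0,\cdot)$) always belongs to $\sigma_{p,\alpha}$, and in case (c) it pins the spectrum down to exactly $\{it:t\ge0\}$.

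I expect the main obstacle to be the borderline convergence analysis on the two rays bounding each sector, where $\RE(\lambda e^{i\phi})$ vanishes and the exponential gain is lost: there the outcome is decided by the competition between the polynomial Green's-function decay rate $\pi(\alpha+2)/\theta$ and the area element, and it is exactly this that forces one side of the sandwich to be the closed set $i\overline{S(0,\pi-\theta_\Omega)}$ and the other the half-open $iS[0,\pi-\Theta_\Omega)$. Care is also needed to justify that the Littlewood--Paley/Hardy--Stein equivalence holds with uniform constants for all $p\ge1$, and to pass rigorously from the exact sector asymptotics to the comparison bounds for a general Koenigs domain $\Omega$.
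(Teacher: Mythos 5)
Your proposal follows essentially the same route as the paper's: the characterization $\lambda\in\sigma_{p,\alpha}\setminus\{0\}\iff e^{\lambda h}\in A_\alpha^p$, the Hardy--Stein/Littlewood--Paley reduction to the finiteness of $\int_\Omega e^{p\mathrm{Re}(\lambda w)}g_\Omega(w,h(0))^{\alpha+2}\,dA(w)$, the explicit sector model via the sector Green's function, transfer to general $\Omega$ by domain monotonicity, the observation that $|e^{ith}|=e^{-t\,\mathrm{Im}h}\le1$ places the ray $\{it:t\ge0\}$ in the spectrum, and the treatment of $H^p$ as the case $\alpha=-1$. Two cautions: in the monotonicity transfer the two Green's functions being compared have poles at different points ($h(0)$ versus the natural base point of the comparison sector), so besides domain monotonicity you need a maximum-principle comparison of $g_{\widetilde{\Omega}}(h(0),\cdot)$ against $g_{\widetilde{\Omega}}(\widetilde{h}(0),\cdot)$ away from the poles (this is exactly what the paper's monotonicity lemma supplies); and your remark that the two bounds ``collapse to an equality precisely when $\theta_\Omega=\Theta_\Omega$'' overstates the conclusion, since even then the lower bound $iS[0,\pi-\theta_\Omega)\cup\{0\}$ and the upper bound $i\overline{S(0,\pi-\theta_\Omega)}$ still differ on the boundary ray $\arg\lambda=3\pi/2-\theta_\Omega$, which is precisely why the theorem asserts only a sandwich in case (b).
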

	
	\begin{theorem}\label{thm:intro bergman 0hs}
		Let $(\phi_t)$ be a parabolic semigroup of zero hyperbolic step in $\D$ which induces the semigroup of composition operators $(T_t)$. Suppose that $\Omega$ is the Koenigs domain of $(\phi_t)$ and that $\sigma_{p,\alpha}$ is the point spectrum of the infinitesimal generator of $(T_t)$ when acting on the Bergman space $A_\alpha^p$, $p\ge1$, $\alpha>-1$. Then:
		\begin{enumerate}
			\item[\textup{(a)}] If $\theta_\Omega=\Theta_\Omega=0$, then $iS(0,\pi)\cup\{0\}\subseteq\sigma_{p,\alpha}\subseteq i\overline{S(0,\pi)}$.
			\item[\textup{(b)}] If $\theta_\Omega=\Theta_\Omega\in(0,\pi)$ (and hence $\theta_\Omega^-=\Theta_\Omega^-, \theta_\Omega^+=\Theta_\Omega^+)$, then
			$$iS(\theta_\Omega^-,\pi-\theta_\Omega^+)\cup\{0\}\subseteq \sigma_{p,\alpha} \subseteq i\overline{S(\theta_\Omega^-,\pi-\theta_\Omega^+)}.$$
			\item[\textup{(c)}] If $\theta_\Omega=\pi$, then $\sigma_{p,\alpha}\subseteq \{ie^{i\theta_\Omega^-}t:t\ge0\}$.
			\item[\textup{(d)}] If $\theta_\Omega\in(\pi,2\pi]$, then $\sigma_{p,\alpha}=\{0\}$.
			\item[\textup{(e)}] If $\theta_\Omega<\min\{\pi,\Theta_\Omega\}$, then $iS(\Theta_\Omega^-,\pi-\Theta_\Omega^+)\cup\{0\}\subseteq\sigma_{p,\alpha}\subseteq i\overline{S(\theta_\Omega^-,\pi-\theta_\Omega^+)}$. In case $\Theta_\Omega\ge\pi$, the set on the left becomes $\{0\}$, while if $\theta_\Omega=0$, the set on the right becomes $i\overline{S(0,\pi)}$.
		\end{enumerate} 
		The same results hold for the point spectrum $\sigma_p$ when $(T_t)$ acts on the Hardy space $H^p$, $p\ge1$.
	\end{theorem}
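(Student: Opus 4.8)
The plan is to reduce the eigenvalue problem to a membership question and then to an explicit integral governed by the Euclidean geometry of $\Omega$. Writing $h$ for the Koenigs function and recalling that the infinitesimal generator acts by $\Gamma_X f=f'/h'$, the equation $\Gamma_X f=\lambda f$ is the linear ODE $f'=\lambda h' f$, whose only solutions are the constant multiples of $f_\lambda:=e^{\lambda h}$. Equivalently, from $h(\phi_t)=h+t$ one gets $T_t f_\lambda=f_\lambda\circ\phi_t=e^{\lambda t}f_\lambda$, so $f_\lambda$ is a nonzero eigenvector of every $T_t$, and hence of $\Gamma_X$, precisely when $f_\lambda\in X$. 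Thus $\lambda\in\sigma_{p,\alpha}$ (resp. $\sigma_p$) if and only if $e^{\lambda h}\in A_\alpha^p$ (resp. $H^p$); in particular $0\in\sigma$ always, via the constants. This is the same reduction used for the positive hyperbolic step case in Theorem \ref{thm:intro bergman phs}, so I would set it up identically.

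Next I would convert membership into an integral over $\Omega$. By the Littlewood--Paley identity for $A_\alpha^p$ (resp. the Hardy--Stein identity for $H^p$) applied to $f_\lambda$, and the conformal change of variables $w=h(z)$ (under which the Jacobian $|h'|^2$ cancels), one obtains
\begin{equation*}
\|e^{\lambda h}\|_{X}^p\asymp C_\lambda+|\lambda|^2\int_\Omega e^{p\,\mathrm{Re}(\lambda w)}\,\omega_\Omega(w)\,dA(w),
\end{equation*}
where the weight is the Green's function $\omega_\Omega=g_\Omega(\cdot,w_0)$ in the Hardy case and $\omega_\Omega\asymp e^{-2(\alpha+2)d_\Omega(w_0,\cdot)}$, with $d_\Omega$ the hyperbolic distance (via Koebe's estimate $1-|h^{-1}(w)|^2\asymp e^{-2d_\Omega(w_0,w)}$), in the Bergman case; this is the precise sense in which $H^p$ is the extremal Bergman space $A_{-1}^p$. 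In both cases $\omega_\Omega$ is nonnegative, bounded away from $w_0$, and integrable near $w_0$. Hence $\lambda\in\sigma$ iff this integral converges, and since $\mathrm{Re}(\lambda w)=|\lambda|\,|w|\cos(\arg w+\arg\lambda)$, convergence is dictated by the sign of $\cos(\arg w+\arg\lambda)$ over the directions $\arg w$ that $\Omega$ occupies at infinity.

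The geometry now enters through the inner and outer arguments. For the sufficient direction I would bound the integral above: given $\varepsilon>0$, the outer arguments provide $R$ with $\{w\in\Omega:|w|>R\}\subseteq\{-\Theta_\Omega^--\varepsilon<\arg w<\Theta_\Omega^++\varepsilon\}$; if $\arg\lambda\in(\tfrac\pi2+\Theta_\Omega^-,\tfrac{3\pi}2-\Theta_\Omega^+)$ then $\cos(\arg w+\arg\lambda)\le-\delta<0$ uniformly there, so $e^{p\,\mathrm{Re}(\lambda w)}$ decays exponentially in $|w|$ while $\omega_\Omega$ is bounded, and the integral converges. Rotating the condition on $\arg\lambda$ by $i$ gives $iS(\Theta_\Omega^-,\pi-\Theta_\Omega^+)\subseteq\sigma$. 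For the necessary direction I would bound the integral below on inscribed sectors: by definition of the inner arguments there are one-sided sectors $q_++S(0,\theta_\Omega^+)\subseteq\Omega^+$ and $q_-+S(-\theta_\Omega^-,0)\subseteq\Omega^-$; on a sub-cone strictly interior to either, the Euclidean distance to $\partial\Omega$ is comparable to $|w|$, so $\omega_\Omega$ is bounded below by a negative power of $|w|$, and if some admissible $\varphi$ has $\cos(\varphi+\arg\lambda)\ge\delta>0$ then $e^{p\,\mathrm{Re}(\lambda w)}$ grows exponentially and the integral diverges. Running this on the upper and lower inscribed sectors separately and intersecting the two necessary conditions yields $\sigma\subseteq i\overline{S(\theta_\Omega^-,\pi-\theta_\Omega^+)}$. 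The sandwich $iS(\Theta_\Omega^-,\pi-\Theta_\Omega^+)\subseteq\sigma\subseteq i\overline{S(\theta_\Omega^-,\pi-\theta_\Omega^+)}$ is case (e), and specializes to (a) and (b) when $\theta_\Omega=\Theta_\Omega$ (using the ambient growth bound $\sigma\subseteq\{\mathrm{Re}\lambda\le0\}=i\overline{S(0,\pi)}$ to close case (a), where the inscribed sectors degenerate). The remaining cases follow from the same angular bookkeeping: if $\theta_\Omega=\theta_\Omega^-+\theta_\Omega^+=\pi$ the inscribed angular range has length $\pi$, forcing $\arg\lambda=\tfrac\pi2+\theta_\Omega^-$, i.e. $\sigma\subseteq\{ite^{i\theta_\Omega^-}:t\ge0\}$ (case (c)); and if $\theta_\Omega\in(\pi,2\pi]$ the range exceeds length $\pi$, so every $\lambda\neq0$ meets a direction with $\cos(\varphi+\arg\lambda)>0$ and the integral diverges, giving $\sigma=\{0\}$ (case (d)). A Hölder inequality shows that $\{\lambda:\int_\Omega e^{p\,\mathrm{Re}(\lambda w)}\omega_\Omega<\infty\}$ is convex, confirming that $\sigma\setminus\{0\}$ is exactly the cone cut out by the sandwich.

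The main obstacle I anticipate is the pair of weight estimates underlying the two comparisons: establishing that $\omega_\Omega$ is bounded above globally (so exponential decay alone forces convergence) and, more delicately, that $\omega_\Omega$ admits a polynomial-in-$|w|$ lower bound along a cone interior to an inscribed sector (so that exponential growth genuinely wins). The latter is where the sector asymptotics $d_\Omega\sim\frac{\pi}{2\beta}\log|w|$, hence $\omega_\Omega\asymp|w|^{-\pi(\alpha+2)/\beta}$, must be made precise and transferred from the model sector to $\Omega$ via Koebe distortion together with monotonicity of the Green's function and the hyperbolic metric under domain inclusion. A secondary subtlety, already reflected in the open-versus-closed form of the inclusions, is the boundary-ray case $\cos(\varphi+\arg\lambda)=0$, where the exponential degenerates and deciding membership would require the exact decay rate of $\omega_\Omega$; fortunately the stated inclusions leave these rays undetermined, so the crude ``exponential beats polynomial'' dichotomy suffices.
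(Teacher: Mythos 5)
Your proposal is correct and follows essentially the same route as the paper: the reduction to $e^{\lambda h}\in A_\alpha^p$ (resp.\ $H^p$), the Hardy--Stein/Littlewood--Paley conversion to an integral of $e^{p\,\mathrm{Re}(\lambda w)}$ against a power of the Green's function over $\Omega$, and the control of that integral by inscribed and circumscribing angular sectors via the asymptotics $g_{S}\asymp r^{-\pi/\beta}\cos\psi_\theta$. The only organizational difference is that the paper packages these estimates as two reusable lemmas --- domain monotonicity of the point spectrum (Lemma \ref{lem:bergman spectrum monotonicity}) and the exact spectrum for sector Koenigs domains (Lemma \ref{lm:bergman sector}) --- and then deduces the theorem by approximating $\Omega$ with sectors, which in particular takes care of the change of pole from $h(0)$ to the sector's base point that your direct polynomial lower bound on $g_\Omega$ along an inscribed sub-cone quietly requires.
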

	
	In all of the aforementioned spaces, we first explicitly compute the point spectrum whenever the Koenigs domain is exactly an angular sector. Using this as a stepping stone, we then prove our results using monotonicity properties of the point spectrum with regard to the Koenigs domain. Moreover, for the Bergman and Hardy spaces, we use careful estimates of the Green's function on $\Omega$.

	The article is structured as follows: Section \ref{sec:prereq} constitutes a collection of all useful tools that are required for the proofs. Next, in Section \ref{sec:dirichlet} we examine the semigroups of composition operators in the Dirichlet space and prove Theorems \ref{thm:intro defining function}, \ref{thm:intro hyperbolic dirichlet}, \ref{thm:intro phs dirichlet} and \ref{thm:intro 0hs dirichlet}. Then, we take a brief detour and study the growth bounds and norms of the operators in Section \ref{sec:growth bounds}, using hyperbolic geometry. Finally, Section \ref{sec:bergman and hardy} is the part of the manuscript that contains the proofs of Theorems \ref{thm:intro bergman phs} and \ref{thm:intro bergman 0hs}, along with other results describing the nature and form of the point spectrum in Bergman and Hardy spaces. During the course of the article, we prove certain corollaries about the point spectra in conjunction with the dynamical behavior of $(\phi_t)$. For instance, in all the spaces, we characterize completely the point spectrum when the inducing semigroup is of \textit{finite shift}. To better demonstrate the utility of our results, we offer various non-trivial examples where the study of the point spectrum may be facilitated through our theorems and computed through the Euclidean geometry of the Koenigs domain.

	\section{\quad Prerequisites for the Proofs}\label{sec:prereq}
	\hiddennumberedsubsection{Holomorphic Semigroup Theory}\label{sub:semigroups}
	We start with some background on continuous semigroups of holomorphic functions. For this subsection, we almost exclusively follow \cite{Booksem}. Suppose that $(\phi_t)$ is a non-elliptic semigroup in $\D$ with Denjoy--Wolff point $\tau\in\partial\D$, Koenigs function $h$ and Koenigs domain $\Omega$. Firstly, given $z\in\D$, the set $\{\phi_t(z):t\ge0\}$ is known as the \textit{orbit} of $z$ and $z$ is its \textit{starting point}. Furthermore, there exists a unique $\mu\ge0$ such that $ \angle\lim_{z\to\tau}\phi_t'(z)=e^{-\mu t}$, for all $t\ge0$. In both cases, the unique number $\mu$ is called the \textit{spectral value} of $(\phi_t)$. As mentioned in the Introduction, we know that if $\mu>0$, then $(\phi_t)$ is hyperbolic, whereas if $\mu=0$, then it is parabolic.
	
	\textup{From now on, we write $d_{\D}(z,w)$ for the hyperbolic distance in the unit disk of two points $z,w\in\D$. Detailed information on hyperbolic geometry follows in Subsection \ref{sub:conformal invariants}.} A useful result with regard to the spectral value of non-elliptic semigroups is the following:
	\begin{theorem}[{\cite[Theorem 9.1.9]{Booksem}}]\label{thm:total speed}
		Let $(\phi_t)$ be a non-elliptic semigroup in $\D$ with spectral value $\mu\ge0$. Then, for all $z\in\D$ we have
		$$\lim\limits_{t\to+\infty}\dfrac{d_{\D}(z,\phi_t(z))}{t}=\dfrac{\mu}{2}.$$
	\end{theorem}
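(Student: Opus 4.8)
\section*{Proof proposal}

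The plan is to exploit the conformal invariance of the hyperbolic distance together with the Koenigs linearization. Since $h$ maps $\D$ conformally onto $\Omega$ and conjugates $\phi_t$ to the horizontal translation $w\mapsto w+t$ by \eqref{eq:koenigs}, and since conformal maps are hyperbolic isometries, we have
\[
d_{\D}(z,\phi_t(z))=d_{\Omega}\bigl(h(z),h(z)+t\bigr).
\]
Writing $w_0:=h(z)$, the problem reduces to the asymptotics of $\ell(t):=d_\Omega(w_0,w_0+t)$. First I would record that the limit exists and is independent of $z$: the Schwarz--Pick inequality gives $d_\D(\phi_t a,\phi_t b)\le d_\D(a,b)$, whence the semigroup property yields the subadditivity $d_\D(z,\phi_{t+s}(z))\le d_\D(z,\phi_t(z))+d_\D(z,\phi_s(z))$. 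By Fekete's lemma the quotient $d_\D(z,\phi_t(z))/t$ converges to $\inf_{t>0}d_\D(z,\phi_t(z))/t$, and a triangle-inequality estimate $d_\D(z',\phi_t(z'))\le d_\D(z,\phi_t(z))+2d_\D(z,z')$ shows the value is the same for every starting point.

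For the lower bound I would use that $\Omega$ is contained in its minimal horizontal strip $\Sigma$ (cf. \cite[Theorem 9.3.5]{Booksem}), which has width $\pi/\mu$ when $\mu>0$ and degenerates to a half-plane or to $\C$ when $\mu=0$. Monotonicity of the hyperbolic metric under inclusion gives $\ell(t)\ge d_\Sigma(w_0,w_0+t)$. Normalizing $\Sigma=\{0<\IM w<\pi/\mu\}$, the map $w\mapsto e^{\mu w}$ sends $\Sigma$ onto $\mathbb{H}$ and the translation $w\mapsto w+t$ onto the dilation $\zeta\mapsto e^{\mu t}\zeta$; a direct computation then gives $d_\Sigma(w_0,w_0+t)=\tfrac{\mu}{2}t+O(1)$. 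Hence $\liminf_{t\to+\infty}\ell(t)/t\ge \mu/2$ (and trivially $\ge 0$ in the parabolic case).

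For the upper bound I would control the unit hyperbolic steps $s(t):=d_\D(\phi_t(z),\phi_{t+1}(z))=d_\Omega(w_0+t,w_0+t+1)$. The triangle inequality gives $d_\D(z,\phi_n(z))\le\sum_{k=0}^{n-1}s(k)$. The crucial claim is that $s(t)\to \mu/2$ as $t\to+\infty$: since $\phi_t(z)\to\tau$, this is the assertion that the hyperbolic step of the time-one map $\phi_1$ near its Denjoy--Wolff point equals $\tfrac12\log\bigl(1/\phi_1'(\tau)\bigr)=\mu/2$, which I would derive from the Julia--Wolff--Carath\'eodory theorem and Julia's lemma. A Ces\`aro average then yields $\limsup_{n}d_\D(z,\phi_n(z))/n\le \mu/2$, and interpolating to real $t$ by bounding $d_\D(\phi_{\lfloor t\rfloor}(z),\phi_t(z))\le\sup_{0\le s\le1}d_\D(z,\phi_s(z))<+\infty$ gives $\limsup_{t\to+\infty}\ell(t)/t\le\mu/2$.

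Combining the two bounds gives $\lim_{t\to+\infty}d_\D(z,\phi_t(z))/t=\mu/2$ for every $z$. The main obstacle is precisely the step-size limit $s(t)\to\mu/2$: it is the quantitative bridge between the angular derivative $e^{-\mu}$ and the hyperbolic metric, and in the Koenigs picture it amounts to showing $d_\Omega(w_0+t,w_0+t+1)\to\mu/2$, i.e. that $\Omega$ asymptotically fills its minimal strip as $\RE w\to+\infty$. Care is needed in the parabolic case, where orbits may approach $\tau$ tangentially, so one must establish $s(t)\to 0$ without appealing to non-tangential convergence.
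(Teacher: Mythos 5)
The paper offers no proof of this statement: it is quoted directly from \cite[Theorem 9.1.9]{Booksem}, so there is no internal argument to compare yours against, and your proposal must stand on its own. The reduction to $d_\Omega(h(z),h(z)+t)$, the subadditivity/Fekete argument for existence of the limit and its independence of $z$, and the lower bound via monotonicity into the minimal strip are all sound. The upper bound, however, hinges on a claim that is false: the unit step $s(t)=d_\D(\phi_t(z),\phi_{t+1}(z))=d_\Omega(w_0+t,w_0+t+1)$ does \emph{not} converge to $\mu/2$ in general. Its limit is precisely the classical hyperbolic step of the semigroup, and for a parabolic semigroup of positive hyperbolic step that limit is strictly positive while $\mu=0$. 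Concretely, for the parabolic group with Koenigs domain $\Omega=\{\IM w>0\}$ and $w_0=i$, one computes $\cosh\bigl(2s(t)\bigr)=1+\tfrac{1}{2\,\IM(w_0+t)\,\IM(w_0+t+1)}=\tfrac32$ for every $t$, so $s(t)\equiv\tfrac12\operatorname{arccosh}(3/2)>0=\mu/2$. The claim also fails for hyperbolic semigroups: if $\Omega=\Sigma$ is the minimal strip itself and $w_0$ lies off its center line, then $s(t)=d_\Sigma(w_0,w_0+1)>\mu/2$ for all $t$. In all such cases your Ces\`aro bound only yields $\limsup_t d_\D(z,\phi_t(z))/t\le\lim_t s(t)$, which is strictly larger than $\mu/2$. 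The underlying problem is that concatenating unit orbit steps is not asymptotically length-minimizing: the geodesic from $w_0$ to $w_0+t$ drifts toward the ``center'' of $\Omega$ rather than tracking the orbit, so the triangle inequality over unit steps is genuinely lossy and cannot be repaired by a finer analysis of $s(t)$.

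A workable replacement for the upper bound stays entirely inside the Koenigs picture. Since $\Omega$ is convex in the positive direction and $\Sigma$ is the \emph{minimal} horizontal strip containing it (of width $\pi/\mu$, read as $+\infty$ when $\mu=0$), for every $\epsilon>0$ one can find points of $\Omega$ within $\epsilon$ of each boundary line of $\Sigma$; joining them by a path in $\Omega$ and translating that path to the right shows that $\Omega$ contains a horizontal half-strip $H_\epsilon$ of width at least $\pi/\mu-2\epsilon$ (of arbitrarily large width in the parabolic case). Taking $p$ on the center line of $H_\epsilon$, the estimate $d_\Omega(w_0,w_0+t)\le 2d_\Omega(w_0,p)+d_{H_\epsilon}(p,p+t)$ holds, where the term $d_\Omega(p+t,w_0+t)\le d_\Omega(p,w_0)$ comes from Schwarz--Pick applied to the self-map $w\mapsto w+t$ of $\Omega$, and $d_{H_\epsilon}(p,p+t)\le\tfrac12\,\pi(\pi/\mu-2\epsilon)^{-1}\,t+O(1)$ by integrating the density of $H_\epsilon$ along its center line. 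Letting $\epsilon\to0$ gives $\limsup_t d_\D(z,\phi_t(z))/t\le\mu/2$ and, combined with your lower bound, closes the proof in all non-elliptic cases at once.
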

	Besides, there exists a unique holomorphic function $G:\D\to\C$ such that
	\begin{equation*}
		\dfrac{\partial\phi_t(z)}{\partial t}=G(\phi_t(z)), \quad\textup{for all }t\ge0 \textup{ and all }z\in\D.	
	\end{equation*}
	This function is called the \textit{infinitesimal generator} of $(\phi_t)$ and is the counterpart of $\Gamma_X$ for the induced semigroup $(T_t)$. In particular, there is a useful equality relating the infinitesimal generator $G$ with the Koenigs function $h$. Indeed, whenever $(\phi_t)$ is non-elliptic, we have
	\begin{equation}\label{eq:generator non-elliptic}
		G(z)=\dfrac{1}{h'(z)}, \quad\textup{for all }z\in\D.
	\end{equation}
	
	Concerning the Koenigs domain, the type and spectral value of a non-elliptic semigroup dictate (up to a certain point) the geometry of $\Omega$. As a matter of fact, $\Omega$ is \textit{convex in the positive direction}, i.e. $\Omega+t\subseteq\Omega$, for all $t\ge0$. In particular, if $(\phi_t)$ is a hyperbolic with spectral value $\mu>0$, then there exists a minimal strip of width $\pi/\mu$ containing $\Omega$. If $(\phi_t)$ is parabolic of positive hyperbolic step, then $\Omega$ is contained in some horizontal half-plane, but not inside a horizontal strip. Finally, if $(\phi_t)$ is parabolic of zero hyperbolic step, then $\Omega$ is not contained in any horizontal half-plane. Combining with the fact that $\Omega$ is convex in the positive direction and not the whole complex plane, we get that $\Omega$ is contained in some slit plane. 
	
	The quantity that downright describes the geometry of $\Omega$ is the so-called \textit{defining function} of $(\phi_t)$; see \cite[Definition 2.3]{BGGY}. As we mentioned before, given a non-elliptic semigroup $(\phi_t)$ in $\D$, there exists a unique upper semi-continuous function $\psi_\Omega:I\to[-\infty,+\infty)$, where $I$ is an open subinterval of $\mathbb{R}$ such that the Koenigs domain $\Omega$ may be parametrized as $\Omega=\{x+iy\in\C:y\in I,\; x>\psi_\Omega(y)\}$. Note that we allow the function $\psi_\Omega$ to take the value $-\infty$. Recall that if $I$ is an open subinterval of $\mathbb{R}$, a function $u:I\to[-\infty,+\infty)$ is called \textit{upper semi-continuous} provided that for every $x_0\in I$
	\begin{equation*}
		\limsup_{x\to x_0}u(x)\le u(x_0).
	\end{equation*}
	Other than our first classification of semigroups, we may proceed to an entirely different one. Let $(\phi_t)$ be a non-elliptic semigroup in $\D$ with Denjoy--Wolff point $\tau\in\partial\D$. The new classification relies on the notion of horodisks. A \textit{horodisk} centered at $\tau$ is a disk contained inside $\D$ which is internally tangent to $\partial\D$ at $\tau$. Then, $(\phi_t)$ is said to be of \textit{finite shift} if there exists some $z\in\D$ such that its orbit converges to $\tau$ without ever intersecting some horodisk of $\D$ centered at $\tau$. It is known by \cite[Lemma 17.7.4]{Booksem} that if this behavior is demonstrated for some $z\in\D$, then it is shared by all points in the unit disk. On the other hand, we say that $(\phi_t)$ is of \textit{infinite shift} if every orbit intersects all horodisks centered at $\tau$.
	
	We will only need certain basic facts about semigroups of finite shift. First of all, such semigroups are necessarily parabolic of positive hyperbolic step. In particular, in \cite[Proposition 3.2]{our_finiteshift} it is shown that they always have inner (and by extension outer) argument equal to $\pi$. Finally, the following result about the asymptotic behavior of the hyperbolic distance along an orbit holds:
	\begin{lemma}[{\cite[Corollary 4.1]{our_finiteshift}}]\label{lm:total speed finite shift}
		Let $(\phi_t)$ be a semigroup of finite shift in $\D$. Then, for all $z\in\D$ we have
		$$\lim\limits_{t\to+\infty}\dfrac{d_{\D}(z,\phi_t(z))}{\log t}=1.$$
	\end{lemma}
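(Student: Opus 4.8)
The plan is to transport the whole problem to the Koenigs domain and there estimate a hyperbolic distance between two points lying on a horizontal orbit. Since the Koenigs function $h$ is a conformal isomorphism of $\D$ onto $\Omega$, the hyperbolic distance is a conformal invariant, and the functional equation \eqref{eq:koenigs} gives, for a fixed $z\in\D$ and $w_0:=h(z)$,
$$d_{\D}(z,\phi_t(z))=d_\Omega\big(h(z),h(\phi_t(z))\big)=d_\Omega(w_0,w_0+t).$$
Thus it suffices to prove $d_\Omega(w_0,w_0+t)/\log t\to 1$. Being of finite shift, $(\phi_t)$ is parabolic of positive hyperbolic step, so after a translation I may assume $\Omega\subseteq\mathbb{H}:=\{\IM w>0\}$ with $y_0:=\IM w_0>0$; moreover $\Omega$ is convex in the positive direction ($\Omega+s\subseteq\Omega$ for all $s\ge0$) and, by \cite[Proposition 3.2]{our_finiteshift}, its inner argument equals $\pi$, whence also its outer argument equals $\pi$ (it is at most $\pi$ since $\Omega\subseteq\mathbb{H}=S(0,\pi)$). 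That $\Omega$ is squeezed between half-planes in the sense of the inner and outer arguments is exactly what will pin the limit to $1$.

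For the lower bound I would use monotonicity of the hyperbolic metric under inclusion. From $\Omega\subseteq\mathbb{H}$ we get $d_\Omega(w_0,w_0+t)\ge d_{\mathbb{H}}(w_0,w_0+t)$, and since both points sit at the common height $y_0$, the explicit half-plane formula gives $\cosh\big(2\,d_{\mathbb{H}}(w_0,w_0+t)\big)=1+t^2/(2y_0^2)$, so that $d_{\mathbb{H}}(w_0,w_0+t)=\log t+O(1)$. Hence $\liminf_{t\to+\infty}d_\Omega(w_0,w_0+t)/\log t\ge 1$.

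For the upper bound, fix $\theta\in(0,\pi)$. Since the inner argument is $\pi$, there is a vertex $q$ with $U:=q+S(0,\theta)\subseteq\Omega$. Choosing any $a\in U$, the point $a+t$ again lies in $U$ (a sector $S(0,\theta)$ with $\theta\le\pi$ is stable under translation to the right), so $d_\Omega(a,a+t)\le d_U(a,a+t)$. Conformally mapping $S(0,\theta)$ onto $\mathbb{H}$ by $w\mapsto w^{\pi/\theta}$ and applying the half-plane formula yields the sharp asymptotic
$$d_U(a,a+t)=d_{S(0,\theta)}(a-q,a-q+t)=\tfrac12\Big(\tfrac{\pi}{\theta}+1\Big)\log t+O(1),\qquad t\to+\infty.$$
To pass from the pair $(a,a+t)$ back to $(w_0,w_0+t)$, I would combine the triangle inequality with the Schwarz--Pick contraction for the self-map $w\mapsto w+t$ of $\Omega$ (available because $\Omega+t\subseteq\Omega$):
$$d_\Omega(w_0,w_0+t)\le d_\Omega(w_0,a)+d_\Omega(a,a+t)+d_\Omega(a+t,w_0+t)\le d_\Omega(w_0,a)+d_U(a,a+t)+d_\Omega(a,w_0).$$
The two outer terms are constants independent of $t$, so $\limsup_{t\to+\infty}d_\Omega(w_0,w_0+t)/\log t\le\tfrac12(\pi/\theta+1)$. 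Letting $\theta\uparrow\pi$ gives $\limsup\le 1$, and together with the lower bound this proves the claim.

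The main obstacle is the upper bound, and within it the sharp leading constant $\tfrac12(\pi/\theta+1)$ in the sector estimate: one must verify that the geodesic-type behaviour of $d_{S(0,\theta)}(p,p+t)$ produces exactly this coefficient (so that it tends to $1$ as $\theta\uparrow\pi$) rather than a larger multiple of $\log t$, and that the finite-shift hypothesis enters only through the equality of the inner and outer arguments with $\pi$. The Schwarz--Pick step is what makes the argument robust against the contained sector $U$ having its vertex above the orbit; without it one would be forced to fit the orbit $\{w_0+t\}$ itself inside $U$, which need not be possible.
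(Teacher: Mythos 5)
Your proposal is correct. Note that the paper does not prove this lemma at all: it is imported verbatim from \cite[Corollary 4.1]{our_finiteshift}, so there is no in-paper argument to compare against. What you give is a legitimate self-contained derivation from exactly the two facts about finite-shift semigroups that the paper itself records (they are parabolic of positive hyperbolic step, and their inner argument equals $\pi$ by \cite[Proposition 3.2]{our_finiteshift}). Both halves check out with the paper's normalization $\lambda_{\D}(z)=1/(1-|z|^2)$: the half-plane comparison gives $d_{\mathbb{H}}(w_0,w_0+t)=\log t+O(1)$ and hence $\liminf\ge1$, and the sector computation via $w\mapsto w^{\pi/\theta}$ indeed produces $d_{S(0,\theta)}(p,p+t)=\tfrac12(\pi/\theta+1)\log t+O(1)$, since $\bigl|p^{\beta}-(p+t)^{\beta}\bigr|^2\asymp t^{2\beta}$ while $\IM(p^{\beta})\,\IM\bigl((p+t)^{\beta}\bigr)\asymp t^{\beta-1}$ with $\beta=\pi/\theta$. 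The triangle-inequality-plus-Schwarz--Pick step correctly reduces the orbit $\{w_0+t\}$ to the inscribed-sector orbit $\{a+t\}$ at a cost of $2d_\Omega(w_0,a)=O(1)$. Two remarks: the constant $\tfrac12(\pi/\theta+1)$ only tends to $1$ as $\theta\uparrow\pi$, so the argument genuinely requires the full strength of ``inner argument $=\pi$'' (an inner argument $\theta_\Omega<\pi$ would only yield $\limsup\le\tfrac12(\pi/\theta_\Omega+1)>1$), and this is precisely what finite shift supplies; and your proof in fact establishes the slightly more general statement that the limit is $1$ for every parabolic semigroup of positive hyperbolic step whose Koenigs domain has inner argument $\pi$, of which the finite-shift case is an instance.
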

	Note that Lemma \ref{lm:total speed finite shift} improves Theorem \ref{thm:total speed} for semigroups of finite shift.
	
	\hiddennumberedsubsection{Certain Banach Spaces}\label{sub:banachspaces}
	In this subsection, we will review some fundamental facts about the spaces of analytic functions that we are going to use in the present work. As we said during the Introduction, we will work with the Hardy spaces, the Bergman spaces and the Dirichlet space. 
	
	We start with the classical Hardy spaces $H^p$, $p\ge1$, of the unit disk. For $p\in[1,+\infty)$, the \textit{Hardy space} $H^p$ is the space of all analytic functions $f:\D\to\C$ satisfying
	\begin{equation}\label{eq:hardy definition}
		||f||_{H^p}:=\sup\limits_{r\in[0,1)}\left(\int\limits_{0}^{2\pi}|f(re^{i\theta})|^pd\theta\right)^{1/p}<+\infty.
	\end{equation}
	For $p=+\infty$, the Hardy space $H^\infty$ is the space of all bounded analytic functions in the unit disk. A very useful characterization about the membership of a holomorphic function in a Hardy space is the well-known \textit{Littlewood-Paley Formula}. Note that by $dA$ we mean the normalized Lebesgue area measure.
	\begin{theorem}[{cf. \cite[Theorem 1]{Yamashita}}]\label{thm:hardy hardy-stein}
		Let $f:\D\to\C$ be holomorphic and $p\in[1,+\infty)$. Then $f\in H^p$ if and only if
		\begin{equation}\label{eq:hardy hardy-stein}
			\int\limits_{\D}|f(z)|^{p-2}|f'(z)|^2\log\frac{1}{|z|}dA(z)<+\infty.
		\end{equation}
	\end{theorem}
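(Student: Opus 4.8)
The plan is to establish the exact Littlewood--Paley identity and then read off the claimed equivalence from it, rather than proving the two implications separately. The starting point is the pointwise computation that for holomorphic $f$, away from its zeros,
$$\Delta|f(z)|^p=p^2|f(z)|^{p-2}|f'(z)|^2,$$
which I would verify directly by applying $\Delta=4\partial_z\partial_{\bar z}$ to $(f\bar f)^{p/2}$ and using $\partial_{\bar z}f=0$. Since $\log|f|$ is subharmonic and $t\mapsto e^{pt}$ is increasing and convex, $|f|^p$ is subharmonic on $\D$, so its distributional Laplacian is a nonnegative Borel measure $\mu$.

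The first genuine point is to show that $\mu$ has no singular part, i.e. that $d\mu=p^2|f|^{p-2}|f'|^2\,dA_{\mathrm{Leb}}$ on all of $\D$. Off the (discrete) zero set the classical formula above holds, and the candidate density is locally integrable near a zero $z_0$ of order $m$, where it behaves like $|z-z_0|^{mp-2}$ with $mp-2>-2$. Hence $\mu$ and the density differ only by a measure supported on the zero set, which must be a countable sum of atoms; but a positive atom $c\,\delta_{z_0}$ would force $|f|^p$ to have a local summand $c\log|z-z_0|\to-\infty$, contradicting that $|f|^p$ is continuous and bounded near $z_0$. This rules out atoms and pins down $\mu$.

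Next I would invoke the Riesz representation on the disk $\{|z|<r\}$ (Green's identity with the kernel $\log(r/|z|)$, handling the singularity at the origin by excising a disk of radius $\varepsilon$ and letting $\varepsilon\to0$) to obtain, for every $r\in(0,1)$,
$$\frac{1}{2\pi}\int_0^{2\pi}|f(re^{i\theta})|^p\,d\theta=|f(0)|^p+\frac{p^2}{2\pi}\int_{|z|<r}|f(z)|^{p-2}|f'(z)|^2\log\frac{r}{|z|}\,dA_{\mathrm{Leb}}(z).$$
By subharmonicity the left-hand side is nondecreasing in $r$, so by \eqref{eq:hardy definition} the condition $f\in H^p$ is equivalent to its boundedness as $r\to1^-$.

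Finally, letting $r\to1^-$, for each fixed $z\in\D$ the integrand $|f|^{p-2}|f'|^2\log(r/|z|)\,\mathbf{1}_{\{|z|<r\}}$ increases to $|f|^{p-2}|f'|^2\log(1/|z|)$, so monotone convergence gives that the right-hand side increases to $|f(0)|^p+\frac{p^2}{2\pi}\int_\D|f|^{p-2}|f'|^2\log(1/|z|)\,dA_{\mathrm{Leb}}$. Comparing the two sides, the left is bounded in $r$ if and only if the right is finite, which is exactly the stated equivalence (the normalization of $dA$ only absorbs into the harmless constant $p^2/(2\pi)$). The main obstacle is the second paragraph, namely controlling the Riesz measure of $|f|^p$ at the zeros of $f$ so that no spurious point masses enter the identity; the rest is a monotone passage to the limit inside an exact equality.
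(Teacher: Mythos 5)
Your argument is correct, but it is worth pointing out that the paper does not prove this statement at all: it imports it from Yamashita with the weight $1-|z|$ in place of $\log\frac{1}{|z|}$ and merely remarks that the two conditions are equivalent (which they are, since $\log\frac{1}{|z|}\asymp 1-|z|$ as $|z|\to1^-$ and the local integrability of $|f|^{p-2}|f'|^2\log\frac{1}{|z|}$ near the origin is automatic). Your self-contained derivation of the exact Littlewood--Paley identity is therefore a genuinely different route, and it buys more than the stated equivalence, namely the identity itself. The one step that truly needs care is the one you single out: for $p<2$ the function $|f|^p$ is not $C^2$ at the zeros of $f$, so the Riesz measure must be identified there, and your argument does this correctly --- the candidate density behaves like $|z-z_0|^{mp-2}$ with $mp-2>-2$, hence is locally integrable, and a point mass in the Riesz measure would force $|f|^p$ to behave like $c\log|z-z_0|\to-\infty$ near $z_0$, contradicting its continuity. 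Two cosmetic remarks: the Green's-identity computation as literally described presupposes smoothness, so at the zeros of $f$ you should either excise small disks around them as well (their boundary contributions vanish by the same local estimates) or simply invoke the general Riesz--Jensen representation for subharmonic functions with the Riesz measure you have already identified; and the degenerate case $f\equiv\text{const}$ should be set aside at the outset. Neither affects the validity of the proof.
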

	Note that in \cite[Theorem 1]{Yamashita} the factor $\log(1/|z|)$ in the integral is not present and in its place there is the factor $1-|z|$. It is easily verified that the two results are equivalent.
	
	Moving on, let $p\in[1,+\infty)$ and $\alpha\in(-1,+\infty)$. Then, the \textit{(weighted) Bergman space} $A_\alpha^p$ consists of all functions $f$ holomorphic in the unit disk that satisfy
	\begin{equation}\label{eq:bergman norm}
		||f||_{A_\alpha^p}:=\left(\int\limits_{\D}|f(z)|^p(1-|z|^2)^\alpha dA(z)\right)^{1/p}<+\infty.
	\end{equation}
	Similarly to the Hardy spaces, there exists a handy integrability condition about the membership of a holomorphic function in a Bergman space. This is known as the \textit{Hardy-Stein Formula}.
	\begin{theorem}[{\cite[Lemma 2.3]{Smith}}]\label{thm:bergman hardy-stein}
		Let $f:\D\to\C$ be holomorphic, $p\in[1,+\infty)$ and $\alpha\in(-1,+\infty)$. Then $f\in A_\alpha^p$ if and only if
		\begin{equation}\label{eq:bergman hardy-stein}
			\int\limits_{\D}|f(z)|^{p-2}|f'(z)|^2\left(\log\frac{1}{|z|}\right)^{\alpha+2}dA(z)<+\infty.
		\end{equation}
	\end{theorem}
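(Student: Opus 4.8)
The plan is to reduce membership in $A_\alpha^p$ to the stated integral via a Littlewood--Paley/Green-type identity, exactly paralleling (and in fact generalizing) the Hardy-space statement in Theorem \ref{thm:hardy hardy-stein}. First I would record the pointwise identity
\begin{equation*}
	\Delta|f(z)|^p=p^2|f(z)|^{p-2}|f'(z)|^2,
\end{equation*}
valid wherever $f\neq0$. This is obtained by writing $|f|^p=e^{p\log|f|}$ with $\log|f|$ harmonic off the zero set and computing $|\nabla\log|f||^2=|f'|^2/|f|^2$, so that $\Delta e^{p\log|f|}=p^2e^{p\log|f|}|\nabla\log|f||^2$. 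At the isolated zeros of $f$ one checks that the right-hand side is locally integrable (near a zero of order $m$ it behaves like $|z-z_0|^{mp-2}$, integrable for $p>0$) and carries no singular mass, so the identity persists as an equality of measures.

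Next I would pass to the circular means $M_p^p(r,f):=\frac{1}{2\pi}\int_0^{2\pi}|f(re^{i\theta})|^p\,d\theta$. The divergence theorem gives $2\pi r\,\frac{d}{dr}M_p^p(r,f)=\int_{|z|<r}\Delta|f|^p\,dx\,dy$; integrating once more in $r$ and applying Tonelli yields the Littlewood--Paley identity
\begin{equation*}
	M_p^p(r,f)=|f(0)|^p+\frac{p^2}{2\pi}\int\limits_{|z|<r}|f(z)|^{p-2}|f'(z)|^2\log\frac{r}{|z|}\,dx\,dy .
\end{equation*}
Letting $r\to1^-$ and invoking monotone convergence recovers Theorem \ref{thm:hardy hardy-stein}; here instead I integrate this identity against the Bergman weight. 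Writing the norm in polar coordinates gives $\|f\|_{A_\alpha^p}^p=2\int_0^1 M_p^p(r,f)(1-r^2)^\alpha r\,dr$. Substituting the identity and using Tonelli once more (all integrands are nonnegative, so no integrability hypothesis is needed) shows, after discarding the term $2|f(0)|^p\int_0^1(1-r^2)^\alpha r\,dr$, which is finite precisely because $\alpha>-1$, that $\|f\|_{A_\alpha^p}^p<+\infty$ if and only if
\begin{equation*}
	\int\limits_{\D}|f(z)|^{p-2}|f'(z)|^2\,K(|z|)\,dx\,dy<+\infty,\qquad K(\rho):=\int_\rho^1\log\frac{r}{\rho}(1-r^2)^\alpha r\,dr .
\end{equation*}

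It then remains to compare the weight $K$ with the target weight $(\log\tfrac{1}{|z|})^{\alpha+2}$, and I expect this to be the main obstacle. Since $f$ is holomorphic, the integrand is locally integrable (with integrable singularities at the zeros of $f$ when $p<2$), and both $K(\rho)$ and $(-\log\rho)^{\alpha+2}$ are integrable against area measure near the origin; hence on $\{|z|\le 1/2\}$ both weighted integrals are automatically finite and convergence is decided only near the boundary. There, setting $\sigma=1-\rho$ and using $\log\frac{r}{\rho}\approx\sigma-(1-r)$ together with $(1-r^2)^\alpha\approx(2(1-r))^\alpha$, a direct computation gives
\begin{equation*}
	K(\rho)\sim\frac{2^\alpha}{(\alpha+1)(\alpha+2)}\,\sigma^{\alpha+2}\qquad(\rho\to1^-),
\end{equation*}
which is comparable to $(-\log\rho)^{\alpha+2}\sim\sigma^{\alpha+2}$. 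The delicate point is that near $\rho=0$ the two weights are genuinely \emph{not} comparable (one grows like $|\log\rho|$, the other like $|\log\rho|^{\alpha+2}$); the argument succeeds only because this discrepancy is confined to the compact region where both integrals converge for free, so that $K(|z|)\asymp(\log\tfrac{1}{|z|})^{\alpha+2}$ in the effective range. Combining these observations yields the claimed equivalence, and absorbing the inessential constants $p^2/2\pi$ and the normalization of $dA$ completes the proof.
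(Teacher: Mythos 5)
The paper offers no proof of this statement: it is imported verbatim as \cite[Lemma 2.3]{Smith}, so there is nothing internal to compare against. Your argument is the standard Littlewood--Paley/Hardy--Stein derivation and it is correct: the identity $\Delta|f|^p=p^2|f|^{p-2}|f'|^2$ (with no singular mass at the zeros when $p\ge1$), the radial integration giving $M_p^p(r,f)=|f(0)|^p+\frac{p^2}{2\pi}\int_{|z|<r}|f|^{p-2}|f'|^2\log\frac{r}{|z|}\,dx\,dy$, the Tonelli exchange producing the weight $K(\rho)=\int_\rho^1\log\frac{r}{\rho}(1-r^2)^\alpha r\,dr$, and the observation that the two weights need only be compared near $\partial\D$ because the integrand is automatically area-integrable against either weight on $\{|z|\le1/2\}$ are all sound, and your asymptotic constant $2^\alpha/((\alpha+1)(\alpha+2))$ checks out. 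Two points deserve one more line each in a polished write-up: first, you need two-sided comparability $K(\rho)\asymp(1-\rho)^{\alpha+2}$ \emph{uniformly} on $[1/2,1)$, not merely an asymptotic equivalence as $\rho\to1^-$; this follows from $\log(r/\rho)\asymp r-\rho$ for $\tfrac12\le\rho\le r\le1$ and the substitution $r=\rho+u(1-\rho)$, which gives $K(\rho)\asymp(1-\rho)^{\alpha+2}B(2,\alpha+1)$ with absolute constants. Second, the divergence-theorem step should formally excise small disks around the (finitely many, per compact set) zeros of $f$ and note that the boundary contributions are $O(\epsilon^{mp})\to0$ for $p\ge1$; you gesture at this but it is where the restriction $p\ge1$ actually enters. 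With those remarks your proof is complete and is, as far as one can tell, essentially the argument behind Smith's lemma.
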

	
	Taking into account both Theorems \ref{thm:hardy hardy-stein} and \ref{thm:bergman hardy-stein}, we could say that each Hardy space $H^p$, $p\in[1,+\infty)$, can be thought of as the space $A_{-1}^p$. This observation will play a key role into accelerating the proofs when working in the setting of the Hardy spaces. Of course, the Hardy and Bergman spaces may be also defined in the same way for $p\in(0,1)$ and Theorems \ref{thm:hardy hardy-stein} and \ref{thm:bergman hardy-stein} still hold. However, for this choice of $p$, they cease to be Banach spaces.
	
	Finally, the \textit{Dirichlet space} $\mathcal{D}$ is the space of all holomorphic functions $f$ in the unit disk such that
	\begin{equation}
		\mathcal{D}(f):=\int\limits_{\D}|f'(z)|^2dA(z)<+\infty.
	\end{equation}
	Note that $\mathcal{D}(f)$ is a semi-norm. Through the change of variables formula, we understand that a holomorphic $f:\D\to\C$ belongs to the Dirichlet space if and only if the image $f(\D)$ has bounded area counting multiplicity.

	\hiddennumberedsubsection{Operator Theory}\label{sub:operator theory}
	We start with certain basic notions with regard to semigroups of composition operators. For an in-depth presentation of their rich theory, we refer to the survey \cite{siskakis_review} and the books \cite{Cowen-Maccluer, zhu}. Let $X$ be a Banach space of analytic functions in the unit disk such that $f\circ \phi\in X$, for any choice of $f\in X$ and injective self-map $\phi$ of $\D$. Suppose that $(\phi_t)$ is a semigroup of holomorphic functions in $\D$. For each $t\ge0$, consider the composition operator $T_t$ acting on $X$, where $T_t(f)=f\circ\phi_t\in X$, $f\in X$. Suppose that this semigroup of composition operators is strongly continuous.
	
	Given this strongly continuous semigroup $(T_t)$, its \textit{infinitesimal generator} $\Gamma_X$ is defined through
	\begin{equation}\label{eq:infinitesimal generator}
		\Gamma_X(f):=\lim\limits_{t\to0}\dfrac{T_t(f)-f}{t}, \quad f\in D_X,
	\end{equation}
	where the limit is evaluated with respect to the norm of $X$. It was proved in \cite[Theorem 3.7]{berksonporta}, \cite[Theorem 1]{Siskakis-Bergman} and \cite[Theorem 1]{Siskakis-Dirichlet} that if $X$ is a Hardy space $H^p$, $p\ge1$, or a Bergman space $A_\alpha^p$, $p\ge1$, $\alpha>-1$, or the Dirichlet space $\mathcal{D}$, respectively, then the domain of $\Gamma_X$ may be written as
	\begin{equation}\label{eq:domain of generator}
		D_X=\left\{f\in X:Gf'\in X\right\},
	\end{equation}
	where $G$ is the infinitesimal generator of the inducing semigroup $(\phi_t)$. This greatly facilitates research and allows us to interpret geometrically the point spectrum of $\Gamma_X$ in subsequent sections.
	
	Furthermore, given a strongly continuous semigroup $(T_t)$, its \textit{type} (also known as \textit{growth bound}) is defined as
	\begin{equation}\label{eq:type}
		\omega_X:=\lim\limits_{t\to+\infty}\dfrac{\log||T_t||_X}{t},
	\end{equation}
	where now $||\cdot||_X$ denotes the norm of the operator with respect to the Banach space $X$. We prefer the notation $\omega_X$ instead of $\omega$ in order to avoid confusion, since the same semigroup may be viewed under the scope of different Banach spaces. In addition, through the type of the semigroup, we can define the \textit{spectral radius} $r_X(T_t)$ of each operator $T_t$, $t\ge0$. Indeed, $r_X(T_t):=e^{\omega_X t}$.
	
	Through the type of a semigroup, we are also able to extract a useful inclusion about the full spectrum of its infinitesimal generator. In general we will use the notation $\sigma_X:=\sigma_X(\Gamma_X)$ for the \textit{point spectrum} of $\Gamma_X$ whenever the semigroup acts on the space $X$, and the notation $\sigma_X^f:=\sigma_X^f(\Gamma_X)$ for the respective \textit{full spectrum}. We know that
	\begin{equation}\label{eq:full spectrum type}
		\sigma_X^f\subseteq\{\lambda\in\C:\mathrm{Re}\lambda\le\omega_X\}.
	\end{equation}
	Of course, since $\sigma_X\subseteq \sigma_X^f$, the same inclusion is also trivially valid for the point spectrum.

	\hiddennumberedsubsection{Conformal Invariants} \label{sub:conformal invariants}
	At this point, we take a moment to talk about two conformally invariant quantities that we need during the course of the article. We start with certain notions of hyperbolic geometry. For a concise presentation of the hyperbolic quantities we utilize, we refer to \cite[Chapter 5]{Booksem}. The \textit{hyperbolic metric} in the unit disk is defined through the formula
	\begin{equation}\label{eq:hyperbolic metric D}
		\lambda_{\D}(z)|dz|=\dfrac{|dz|}{1-|z|^2},
	\end{equation}
	where the function $\lambda_{\D}$ is called the \textit{hyperbolic density} of $\D$. Then, given $z_1,z_2\in\D$, their \textit{hyperbolic distance} $d_{\D}(z_1,z_2)$ in the unit disk is 
	\begin{equation}\label{eq:hyperbolic distance D}
		d_{\D}(z_1,z_2)=\inf\limits_{\gamma}\int\limits_{\gamma}\lambda_{\D}(\zeta)|d\zeta|,
	\end{equation}
	where the infimum is taken over all piecewise $C^1$-smooth curves $\gamma:[0,1]\to\D$ satisfying $\gamma(0)=z_1$ and $\gamma(1)=z_2$. For the purposes of the present work, we are primarily concerned with one instance of the hyperbolic distance. More specifically, it can be proved that
	\begin{equation}\label{eq:hyperbolic distance 0}
		d_{\D}(0,z)=\dfrac{1}{2}\log\dfrac{1+|z|}{1-|z|}, \quad z\in\D.
	\end{equation}
	
	We move on to a second conformal invariant, \textit{Green's function}. Let $\Omega\subsetneq\C\cup\{\infty\}$ be a domain with non-polar boundary (i.e. of positive logarithmic capacity). Then, for each $w\in\Omega$ there exists a unique function $g_\Omega(\cdot,w):\Omega\to(-\infty,+\infty]$ such that
	\begin{enumerate}
		\item[(i)] $g_\Omega(\cdot,w)$ is harmonic on $\Omega\setminus\{w\}$ and bounded outside each neighborhood of $\{w\}$;
		\item[(ii)] $g_\Omega(w,w)=\infty$ and as $z\to w$
		$$g_\Omega(z,w)= \begin{cases}
			\log|z|+O(1),\quad w=\infty\\
			\log|z-w|+O(1), \quad w\ne\infty;
		\end{cases}$$
		\item[(iii)] $g_\Omega(z,w)\to 0$, as $z\to\zeta$, for nearly every $\zeta\in\partial\Omega$, where ``nearly every'' means for all $\zeta\in\partial\Omega\setminus E$ for some $E\subset\partial\Omega$ that has zero logarithmic capacity.
	\end{enumerate}
	This function is called the \textit{Green's function of} $\Omega$ \textit{with pole at} $w$. For background on Green's function and the proofs of results not proven below, we refer to \cite[Section 4.4]{Ransford}. Every Green's function is symmetric in the sense that $g_\Omega(z,w)=g_\Omega(w,z)$, for all $z,w\in\Omega$. Especially in the unit disk, we know the formula
	\begin{equation}\label{eq:green unit disk}
		g_{\D}(0,z)=\log\dfrac{1}{|z|}, \quad \textup{for all }z\in\D.
	\end{equation}
	In addition, Green's function is conformally invariant. In other words, given two domains $\Omega_1,\Omega_2\subsetneq\C\cup\{\infty\}$ with non-polar boundaries and a conformal mapping $f:\Omega_1\to\Omega_2$, we have
	\begin{equation}\label{eq:green invariance}
		g_{\Omega_1}(z,w)=g_{\Omega_2}(f(z),f(w)), \quad\mathrm{for \; all\;}z,w\in\Omega_1.
	\end{equation}
	Combining this property with the known formula for the unit disk, we are able to explicitly compute the Green's function for certain standard domains. We provide an example that will be needed later on.
	\begin{example}[{\cite[p.109]{Ransford}}]\label{ex:green sector}
		Consider the symmetric angular sector $S:=\{w\in\C:|\arg w|<\pi/(2\alpha)\}$, for $\alpha\in[1/2,+\infty)$. Then, for any pair of points $z_1,z_2\in S$, we have
		\begin{equation*}
			g_S(z_1,z_2)=\log\left|\dfrac{z_1^\alpha+\overline{z_2}^\alpha}{z_1^\alpha-z_2^\alpha}\right|.
		\end{equation*}
	\end{example}
	Finally, Green's function satisfies a domain monotonicity property. Given $\Omega_1,\Omega_2$ as above with $\Omega_1\subseteq\Omega_2$, we know that
	\begin{equation}\label{eq:green monotonicity}
		g_{\Omega_1}(z,w)\le g_{\Omega_2}(z,w), \quad\mathrm{for \; all\;}z,w\in\Omega_1.
	\end{equation}

	\hiddennumberedsubsection{Notation}
	To avoid confusion later on, we exhibit here all the pieces of notation that deem necessary in the subsequent sections. Some of them are already present in the Introduction, but we repeat them to aid the interested reader. Firstly, given $\alpha,\beta\in[-\pi,\pi]$ with $\alpha<\beta$, we write $S(\alpha,\beta)=\{w\in\C:\arg w\in(\alpha,\beta)\}$. By $\overline{S(\alpha,\beta)}$ we denote the usual topological closure of the above angular sector. In addition, we will write $S[\alpha,\beta)=\{w\in\C:\arg w\in[\alpha,\beta)\}$ and $S(\alpha,\beta]=\{w\in\C:\arg w\in(\alpha,\beta]\}$. Moving on from angular sectors, we turn our attention towards vertical strips. Given $x>0$, we set $\Sigma_x=\{w\in\C:\mathrm{Re}w\in(-x,0)\}$. Again, by $\overline{\Sigma_x}$ we mean the usual topological closure. In similar fashion as previously, we will need the notation $\widetilde{\Sigma}_x=\{w\in\C:-x\le \mathrm{Re} w<0\}$. Other than that, we use the classical notations $[a,b], (a,b), (a,b]$ and $[a,b)$ for the intervals joining two points $a,b\in\C\cup\{\infty\}$. Finally, by $D(w,r)$ we indicate the Euclidean disk of center $w\in\C$ and radius $r>0$.
	
	\section{\quad The Dirichlet Space}\label{sec:dirichlet}
	We initially study semigroups of composition operators acting on the classical Dirichlet space. We will start with certain fundamental results about such semigroups and the point spectra of their infinitesimal generators, and then move on to a more involved study depending on the type of the holomorphic semigroup inducing the semigroup of composition operators and the Euclidean geometry of the respective Koenigs domain.
	
	\subsection{Basic Properties}
	
	Let $(\phi_t)$ be a non-elliptic semigroup in $\D$, $G$ its infinitesimal generator, $h$ its Koenigs function and $\Omega$ its Koenigs domain. Suppose that $(\phi_t)$ induces $(T_t)$. Recall that by \cite[Theorem 1]{Siskakis-Dirichlet}, its infinitesimal generator $\Gamma_\mathcal{D}$ is defined on the domain $D_\mathcal{D}=\{f\in\mathcal{D}: Gf'\in\mathcal{D}\}$. Our first objective is to extract some general auxiliary results connecting the Koenigs domain $\Omega$ of $(\phi_t)$ with the point spectrum $\sigma_{\mathcal{D}}$ of $\Gamma_\mathcal{D}$. The first result deals with the eigenvalues of the infinitesimal generator and their connection to the Koenigs function of the initial holomorphic semigroup. Its counterpart is known to be true in the Hardy and Bergman spaces; see \cite[p.9]{siskakis_review} and \cite[Theorem 2]{Siskakis-Bergman}, respectively. The proof for the Dirichlet space follows exactly the same steps and relies on solving the differential equation relating the infinitesimal generator $G$ of a holomorphic semigroup with its Koenigs function $h$; see relation \eqref{eq:generator non-elliptic}. For this reason, we opt not to include the proof for the sake of not being redundant.
	
	\begin{proposition}\label{prop:dirichlet spectrum koenigs}
		Let $(\phi_t)$ be a non-elliptic semigroup in $\D$ which induces the semigroup of composition operators $(T_t)$. If $h$ is the Koenigs function of $(\phi_t)$ and $\Gamma_\mathcal{D}$ the infinitesimal generator of $(T_t)$, then $\sigma_{\mathcal{D}}(\Gamma_\mathcal{D})=\{\lambda\in\C:e^{\lambda h}\in\mathcal{D}\}$.
		
	\end{proposition}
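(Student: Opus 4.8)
The plan is to recast the eigenvalue problem as an elementary first-order linear ODE and solve it explicitly, then show that the domain condition comes for free. By the description of the domain in \eqref{eq:domain of generator}, a function $f\in\mathcal{D}$ belongs to $D_\mathcal{D}$ and satisfies $\Gamma_\mathcal{D}(f)=\lambda f$ exactly when $Gf'\in\mathcal{D}$ and $Gf'=\lambda f$ holds pointwise on $\D$, where $G$ is the infinitesimal generator of $(\phi_t)$. First I would invoke the identity $G=1/h'$ from \eqref{eq:generator non-elliptic}, which turns the eigenvalue equation $Gf'=\lambda f$ into the separable equation
\[
f'(z)=\lambda\, h'(z)\, f(z), \qquad z\in\D.
\]
Since $h$ is holomorphic on the simply connected domain $\D$, the function $e^{\lambda h}$ is single-valued and holomorphic; differentiating $f e^{-\lambda h}$ shows that $(f e^{-\lambda h})'\equiv 0$ whenever $f$ solves this ODE, so on the connected domain $\D$ every holomorphic solution has the form $f=C e^{\lambda h}$ for a constant $C\in\C$. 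Hence a nontrivial eigenfunction exists if and only if $e^{\lambda h}$ is itself an admissible eigenfunction.

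The key step is then to verify that the domain requirement is automatic. Here I would compute
\[
G\,(e^{\lambda h})' = G\cdot \lambda h'\, e^{\lambda h} = \lambda\,(G h')\, e^{\lambda h} = \lambda\, e^{\lambda h},
\]
using $G h'=1$. Consequently, if $e^{\lambda h}\in\mathcal{D}$, then $G(e^{\lambda h})'=\lambda e^{\lambda h}$ is a scalar multiple of an element of $\mathcal{D}$ and therefore also lies in $\mathcal{D}$; by \eqref{eq:domain of generator} this places $e^{\lambda h}$ in $D_\mathcal{D}$. Thus the a priori two conditions ``$e^{\lambda h}\in\mathcal{D}$'' and ``$e^{\lambda h}\in D_\mathcal{D}$'' collapse into the single requirement $e^{\lambda h}\in\mathcal{D}$.

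Combining the two steps yields the characterization. If $e^{\lambda h}\in\mathcal{D}$, then $f=e^{\lambda h}$ is a nonzero element of $D_\mathcal{D}$ with $\Gamma_\mathcal{D}(f)=\lambda f$, so $\lambda\in\sigma_\mathcal{D}$; conversely, if $\lambda\in\sigma_\mathcal{D}$, every eigenfunction is a nonzero constant multiple of $e^{\lambda h}$, which forces $e^{\lambda h}\in\mathcal{D}$. The case $\lambda=0$ is consistent, since $e^{0\cdot h}\equiv 1$ is constant and trivially in $\mathcal{D}$, matching the constant eigenfunctions. There is no real obstacle here: the only substantive point is the observation that membership in $\mathcal{D}$ already guarantees membership in the domain $D_\mathcal{D}$, a phenomenon special to this eigenvalue equation and precisely what makes the clean statement $\sigma_\mathcal{D}(\Gamma_\mathcal{D})=\{\lambda\in\C:e^{\lambda h}\in\mathcal{D}\}$ possible. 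This mirrors the corresponding arguments for the Hardy and Bergman spaces referenced in the statement.
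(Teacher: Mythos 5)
Your argument is correct and follows exactly the route the paper indicates (and then omits as redundant): using $\Gamma_\mathcal{D}(f)=Gf'$ with domain \eqref{eq:domain of generator} and $G=1/h'$ from \eqref{eq:generator non-elliptic}, solving the resulting ODE to get $f=Ce^{\lambda h}$, and observing that $G(e^{\lambda h})'=\lambda e^{\lambda h}$ makes the domain condition automatic. This matches the counterpart proofs for the Hardy and Bergman spaces that the authors cite, so no further comment is needed.
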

	
	\begin{remark}\label{rem:dirichlet zero}
		Since the constant functions belong to the Dirichlet space, we understand that $0$ always belongs to the point spectrum of $\Gamma_\mathcal{D}$, regardless of the initial semigroup $(\phi_t)$.
	\end{remark}
	
	We move on to a lemma that provides a sense of monotonicity for the point spectra.
	
	\begin{lemma}\label{lem:dirichlet spectrum monotonicity}
		Let $(\phi_t)$, $(\widetilde{\phi_t})$ be two non-elliptic semigroups in $\D$ with Koenigs functions $h$, $\widetilde{h}$, and Koenigs domains $\Omega$, $\widetilde{\Omega}$, respectively. Assume that $\Gamma_\mathcal{D}$, $\widetilde{\Gamma_\mathcal{D}}$ are the infinitesimal generators of the respective induced semigroups of composition operators. If $\Omega\subseteq\widetilde{\Omega}$, then $\sigma_{\mathcal{D}}(\Gamma_\mathcal{D})\supseteq\sigma_{\mathcal{D}}(\widetilde{\Gamma_\mathcal{D}})$.
	\end{lemma}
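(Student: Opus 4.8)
The plan is to invoke Proposition \ref{prop:dirichlet spectrum koenigs}, which reduces the problem to a comparison of two integrability conditions. By that proposition,
$$\sigma_{\mathcal{D}}(\Gamma_\mathcal{D})=\{\lambda\in\C:e^{\lambda h}\in\mathcal{D}\} \quad\text{and}\quad \sigma_{\mathcal{D}}(\widetilde{\Gamma_\mathcal{D}})=\{\lambda\in\C:e^{\lambda \widetilde{h}}\in\mathcal{D}\},$$
so it suffices to show that $e^{\lambda\widetilde{h}}\in\mathcal{D}$ implies $e^{\lambda h}\in\mathcal{D}$. The case $\lambda=0$ is immediate, since constants lie in $\mathcal{D}$ (cf. Remark \ref{rem:dirichlet zero}), so I may assume $\lambda\ne0$.

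First I would rewrite the Dirichlet semi-norm of $e^{\lambda h}$ as an integral over the Koenigs domain. Since $(e^{\lambda h})'=\lambda h' e^{\lambda h}$, we have $\mathcal{D}(e^{\lambda h})=|\lambda|^2\int_{\D}|h'(z)|^2|e^{\lambda h(z)}|^2\,dA(z)$. The map $h$ is univalent, so performing the change of variables $w=h(z)$ — equivalently, using the fact recorded after the definition of $\mathcal{D}$ that $\int_{\D}|f'|^2\,dA$ equals the area of $f(\D)$ counted with multiplicity — yields
$$\mathcal{D}(e^{\lambda h})=|\lambda|^2\int_{\Omega}e^{2\mathrm{Re}(\lambda w)}\,dA(w).$$
The same computation applied to $\widetilde{h}$ gives the analogous identity with $\Omega$ replaced by $\widetilde{\Omega}$. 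Thus $e^{\lambda h}\in\mathcal{D}$ if and only if $\int_{\Omega}e^{2\mathrm{Re}(\lambda w)}\,dA(w)<+\infty$, and likewise for $\widetilde{h}$ and $\widetilde{\Omega}$.

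The conclusion then follows from monotonicity of the integral of a non-negative function over nested domains. Since the integrand $e^{2\mathrm{Re}(\lambda w)}$ is positive and $\Omega\subseteq\widetilde{\Omega}$, we have $\int_{\Omega}e^{2\mathrm{Re}(\lambda w)}\,dA(w)\le\int_{\widetilde{\Omega}}e^{2\mathrm{Re}(\lambda w)}\,dA(w)$. Hence finiteness of the right-hand integral forces finiteness of the left-hand one, that is, $e^{\lambda\widetilde{h}}\in\mathcal{D}$ implies $e^{\lambda h}\in\mathcal{D}$, which is exactly $\sigma_{\mathcal{D}}(\Gamma_\mathcal{D})\supseteq\sigma_{\mathcal{D}}(\widetilde{\Gamma_\mathcal{D}})$.

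As for obstacles, there is essentially no hard step: the argument is a change of variables followed by monotonicity of integration. The only points that warrant care are the univalence of the Koenigs functions, which guarantees that the change of variables is a genuine bijection onto $\Omega$ with no multiplicity issues, and the observation that the translation ambiguity in the definition of $h$ is harmless, since replacing $h$ by $h+c$ multiplies $e^{\lambda h}$ by the nonzero constant $e^{\lambda c}$ and hence does not affect membership in $\mathcal{D}$.
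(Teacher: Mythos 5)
Your proposal is correct and follows essentially the same route as the paper's proof: both reduce to Proposition \ref{prop:dirichlet spectrum koenigs}, rewrite the Dirichlet semi-norm of $e^{\lambda h}$ as $|\lambda|^2\int_{\Omega}e^{2\mathrm{Re}(\lambda w)}\,dA(w)$ via the change of variables $w=h(z)$, and conclude by monotonicity of the integral of a positive function over nested domains.
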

	\begin{proof}
		Clearly $0\in\sigma_{\mathcal{D}}(\Gamma_\mathcal{D})\cap\sigma_{\mathcal{D}}(\widetilde{\Gamma_\mathcal{D}})$. Next, let $\lambda\in\sigma(\widetilde{\Gamma_\mathcal{D}})\setminus\{0\}$. By the previous proposition, $e^{\lambda \widetilde{h}}$ belongs to the Dirichlet space. By the definition of the semi-norm of the Dirichlet space, this translates to 
		\begin{equation}\label{eq:dirichlet spectrum monotonicity 1}
			+\infty>\mathcal{D}\left(e^{\lambda\widetilde{h}}\right)=\int\limits_{\D}\left|(e^{\lambda \widetilde{h}})'(z)\right|^2dA(z)=|\lambda|^2\int\limits_{\D}\left|e^{2\lambda \widetilde{h}(z)}\right|\left|\widetilde{h}'(z)\right|^2dA(z)=\int\limits_{\widetilde{\Omega}}\left|e^{2\lambda w}\right|dA(w),
		\end{equation}
		where in the last equality we executed the change of variables $w=\widetilde{h}(z)$. But by our hypothesis, $\widetilde{\Omega}\supseteq\Omega$, and thus due to the positivity of the integrand, we have
		\begin{equation}\label{eq:dirichlet spectrum monotonicity 2}
			\int\limits_{\widetilde{\Omega}}\left|e^{2\lambda w}\right|dA(w)\ge \int\limits_\Omega\left|e^{2\lambda w}\right|dA(w)=\int\limits_{\D}\left|e^{2\lambda h(z)}\right|\left|h'(z)\right|^2dA(z)=\dfrac{1}{|\lambda|^2}\int\limits_{\D}\left|(e^{\lambda h})'(z)\right|^2dA(z),
		\end{equation}
		where once again we used the change of variables $z=h^{-1}(w)$. Note that both usages of the change of variables are valid since $h,\widetilde{h}$ are univalent as Koenigs functions. Combining relations \eqref{eq:dirichlet spectrum monotonicity 1} and \eqref{eq:dirichlet spectrum monotonicity 2}, we see that $e^{\lambda h}\in\mathcal{D}$ which via Proposition \ref{prop:dirichlet spectrum koenigs} leads to $\lambda\in\sigma_{\mathcal{D}}(\Gamma_\mathcal{D})$. Consequently, $\sigma_{\mathcal{D}}(\widetilde{\Gamma_\mathcal{D}})\subseteq\sigma_{\mathcal{D}}(\Gamma_\mathcal{D})$.
	\end{proof}
	
	Finally, we explicitly compute the point spectrum of $\Gamma_\mathcal{D}$ whenever the Koenigs domain $\Omega$ is a standard domain: either a half-strip, or a strip, or an angular sector $S(a,b)$, $-\pi\le a<b \le\pi$. Note that the convexity of $\Omega$ in the positive direction requires that $a\le 0\le b$ for the sector $S(a,b)$ to be a Koenigs domain.
	
	\begin{lemma}\label{lm:dirichlet standard domain}
		Let $(\phi_t)$ be a non-elliptic semigroup in $\D$ which induces the semigroup of composition operators $(T_t)$. Suppose that $\Omega$ is the Koenigs domain of $(\phi_t)$ and that $\sigma_\mathcal{D}$ is the point spectrum of the infinitesimal generator of $(T_t)$.
		\begin{enumerate}
			\item[\textup{(a)}] If $\Omega$ is a horizontal half-strip, then $\sigma_\mathcal{D}=iS(0,\pi)\cup\{0\}$.
			\item[\textup{(b)}] If $\Omega$ is a horizontal strip, then $\sigma_\mathcal{D}=\{0\}$.
			\item[\textup{(c)}] If $\Omega=S(a,b)$ and $b-a\in(0,\pi)$, then $\sigma_\mathcal{D}=iS(-a,\pi-b)\cup\{0\}$.
			\item[\textup{(d)}] If $\Omega=S(a,b)$ and $b-a\in[\pi,2\pi]$, then $\sigma_\mathcal{D}=\{0\}$.
		\end{enumerate}
		\begin{proof}
			By Remark \ref{rem:dirichlet zero}, $0$ is an eigenvalue of the infinitesimal generator in any case. Let $\lambda\in\sigma_\mathcal{D}\setminus\{0\}$ and set $h$ the Koenigs function of $(\phi_t)$. By Proposition \ref{prop:dirichlet spectrum koenigs}, the inclusion of $\lambda$ in the point spectrum is equivalent to the inclusion of $e^{\lambda h}$ in the Dirichlet space $\mathcal{D}$. We may now distinguish cases.
			
			(a) Since $\Omega$ is convex in the positive direction, the horizontal half-strip necessarily stretches to the right. Then, there exist $x_0,y_1,y_2\in\mathbb{R}$ with $y_1<y_2$ such that $\Omega=\{x+iy\in\C:x>x_0,y_1<y<y_2\}$. Clearly, by definition, $e^{\lambda h}\in\mathcal{D}$ if and only if
			\begin{equation}\label{eq:dirichlet general 1}
				\int\limits_{\D}\left|\left(e^{\lambda h}\right)'(z)\right|^2dA(z)=|\lambda|^2\int\limits_{\D}\left|e^{2\lambda h(z)}\right||h'(z)|^2dA(z)<+\infty.
			\end{equation}
			Given that $h$ is univalent, the change of variables $w=h(z)$ and use of Cartesian coordinates in \eqref{eq:dirichlet general 1} yields
			\begin{equation}\label{eq:dirichlet general 2}
				+\infty>\int\limits_{\Omega}|e^{2\lambda w}|dA(w)=\int\limits_{x_0}^{+\infty}\int\limits_{y_1}^{y_2}e^{2\mathrm{Re}\lambda x}e^{-2\mathrm{Im}\lambda y}dydx.
			\end{equation}
			Evidently, the integral with respect to $y$ is bounded regardless of $\lambda$ and hence combining everything and returning to \eqref{eq:dirichlet general 2}, we get that $\lambda\in\sigma_\mathcal{D}\setminus\{0\}$ if and only if $\int_{x_0}^{+\infty}e^{2\mathrm{Re}\lambda x}dx<+\infty$. But trivially, the last integral is finite if and only if $\mathrm{Re}\lambda<0$, and we have the desired outcome.
			
			(b) Following exactly the same procedure with a change of variables and Cartesian coordinates, we find that $\sigma_\mathcal{D}\setminus\{0\}=\emptyset$.
			
			(c) Executing the same initial steps with the change of variables $w=h(z)$, we understand that $\lambda\in\sigma_\mathcal{D}\setminus\{0\}$ if and only if
			\begin{equation}\label{eq:dirichlet general 3}
				\int\limits_{\Omega} e^{2\mathrm{Re}(\lambda w)} dA(w)<+\infty.
			\end{equation}
			Recall that due to our hypothesis, $\Omega$ may be written as $\{re^{i\theta}:r>0,\theta\in(a,b)\}$. Thus, we will employ polar coordinates. Set $\lambda=|\lambda|e^{i\psi}$. Then, going back to \eqref{eq:dirichlet general 3}, $\lambda\in\sigma_\mathcal{D}\setminus\{0\}$ if and only if
			\begin{equation}\label{eq:dirichlet general 4}
				\int\limits_{a}^{b}\int\limits_{0}^{+\infty}e^{2|\lambda|\cos(\theta+\psi)r}rdrd\theta<+\infty.
			\end{equation}
			The angular sector $S(a,b)=\Omega$ is invariant with respect to scalings, that is $|c|\Omega=\Omega$, for all $c\ne0$. As a result, the finiteness of the latter integral does not depend on the modulus of $\lambda$. On the other hand, since the integral in \eqref{eq:dirichlet general 4} is easily calculated, we see that it is finite if and only if $\cos(\theta+\psi)<0$, for all $\theta\in(a,b)$. This leads to $\theta+\psi\in(\pi/2,3\pi/2)$, for all $\theta\in(a,b)$, which in general means that $\psi\in[\pi/2-a,3\pi/2-b]$. Next, assuming the negativity of $\cos(\theta+\psi)$, we may continue the computations in \eqref{eq:dirichlet general 4} to find that $\lambda\in\sigma_\mathcal{D}\setminus\{0\}$ if and only if
			\begin{equation*}
				+\infty>\dfrac{1}{4|\lambda|^2}\int\limits_{a}^{b}\dfrac{d\theta}{\cos^2(\theta+\psi)}=\dfrac{1}{4|\lambda|^2}\left(\lim\limits_{\theta\to b}\tan(\theta+\psi)-\lim\limits_{\theta\to a}\tan(\theta+\psi)\right).
			\end{equation*}
			Hence, the finiteness of the integral does not allow $\psi=\pi/2-a$ and $\psi=3\pi/2-b$. As a result, we see that $\lambda\in\sigma_\mathcal{D}\setminus\{0\}$ if and only if $\arg\lambda\in(\pi/2-a,3\pi/2-b)$ which implies the desired outcome.
			
			(d) Working as previously, we see that $\lambda\in\sigma_\mathcal{D}\setminus\{0\}$ if and only if $\cos(\theta+\arg\lambda)<0$, for all $\theta\in(a,b)$ which signifies that $\arg\lambda\in[\pi/2-a,3\pi/2-b]$. But now $b-a\in[\pi,2\pi)$ which renders the interval where $\arg\lambda$ belongs empty whenever $b-a\in(\pi,2\pi)$. On the other hand, if $b-a=\pi$, we get $\arg\lambda=\pi/2-a$. But computing the integral again, the tangent in the results tends to $+\infty$ in case $\arg\lambda=\pi/2-a$. As a result, in this case $\sigma_\mathcal{D}\setminus\{0\}=\emptyset$.
		\end{proof}
	\end{lemma}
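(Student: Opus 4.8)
The plan is to collapse everything to a single integrability condition through Proposition~\ref{prop:dirichlet spectrum koenigs} and then evaluate that integral over each of the four standard shapes. Since $0$ always lies in $\sigma_\mathcal{D}$ by Remark~\ref{rem:dirichlet zero}, I fix $\lambda\neq 0$ and use the characterization $\lambda\in\sigma_\mathcal{D}$ if and only if $e^{\lambda h}\in\mathcal{D}$, where $h$ is the Koenigs function. Writing out the Dirichlet semi-norm, differentiating $(e^{\lambda h})'=\lambda e^{\lambda h}h'$, and using $|e^{\lambda h}|^2=e^{2\mathrm{Re}(\lambda h)}$, the membership condition reads $|\lambda|^2\int_\D e^{2\mathrm{Re}(\lambda h(z))}|h'(z)|^2\,dA(z)<+\infty$. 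Because $h$ is univalent (it is a Koenigs function), the holomorphic change of variables $w=h(z)$ has Jacobian $|h'|^2$ and carries $\D$ onto $\Omega$, so the entire problem reduces to deciding for which $\lambda$ one has
\[
\int_\Omega e^{2\mathrm{Re}(\lambda w)}\,dA(w)<+\infty .
\]
This single criterion drives all four cases.

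For parts (a) and (b) I would use Cartesian coordinates. Convexity of $\Omega$ in the positive direction forces a half-strip to open to the right, so $\Omega=\{x+iy: x>x_0,\ y_1<y<y_2\}$; writing $\lambda=\mathrm{Re}\lambda+i\,\mathrm{Im}\lambda$ gives $\mathrm{Re}(\lambda w)=x\,\mathrm{Re}\lambda-y\,\mathrm{Im}\lambda$, and the integral factors as a bounded integral in $y$ over $[y_1,y_2]$ times $\int_{x_0}^{+\infty}e^{2\mathrm{Re}(\lambda)\,x}\,dx$, which is finite exactly when $\mathrm{Re}\lambda<0$; recognizing $\{\mathrm{Re}\lambda<0\}=iS(0,\pi)$ finishes (a). For the full strip the $x$-integral runs over all of $\mathbb{R}$ and diverges for every real value of $\mathrm{Re}\lambda$, so only $\lambda=0$ survives, giving (b).

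For parts (c) and (d) I would pass to polar coordinates $w=re^{i\theta}$ and set $\lambda=|\lambda|e^{i\psi}$, so that $\mathrm{Re}(\lambda w)=|\lambda|\,r\cos(\theta+\psi)$ and the integral becomes $\int_a^b\!\int_0^{+\infty}e^{2|\lambda| r\cos(\theta+\psi)}\,r\,dr\,d\theta$. The inner radial integral converges if and only if $\cos(\theta+\psi)<0$ (and then equals a multiple of $\cos^{-2}(\theta+\psi)$), so a first necessary condition is $\cos(\theta+\psi)<0$ throughout $(a,b)$, i.e.\ $\theta+\psi\in(\pi/2,3\pi/2)$ for all such $\theta$; this pins $\psi$ to the closed band $[\pi/2-a,\,3\pi/2-b]$. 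The delicate step, and the one I expect to be the crux, is the behavior at the two endpoints of this band: there the remaining angular integral $\int_a^b \cos^{-2}(\theta+\psi)\,d\theta=\tan(b+\psi)-\tan(a+\psi)$ blows up as $\theta+\psi\to\pi/2^+$ or $\theta+\psi\to 3\pi/2^-$, so the values $\psi=\pi/2-a$ and $\psi=3\pi/2-b$ must be discarded. This leaves the open band $\arg\lambda\in(\pi/2-a,\,3\pi/2-b)$, which is exactly $iS(-a,\pi-b)$, proving (c). Part (d) then follows by a length count: when $b-a>\pi$ the interval $(\pi/2,3\pi/2)$ of length $\pi$ cannot contain $\{\theta+\psi:\theta\in(a,b)\}$, and when $b-a=\pi$ the only candidate $\psi=\pi/2-a$ is exactly one of the excluded endpoints, so in both situations $\sigma_\mathcal{D}\setminus\{0\}=\emptyset$.
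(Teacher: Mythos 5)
Your proposal is correct and follows essentially the same route as the paper: reduce membership of $\lambda$ in $\sigma_\mathcal{D}\setminus\{0\}$ to the finiteness of $\int_\Omega e^{2\mathrm{Re}(\lambda w)}\,dA(w)$ via Proposition \ref{prop:dirichlet spectrum koenigs} and the change of variables $w=h(z)$, then use Cartesian coordinates for the (half-)strips and polar coordinates for the sectors, with the exclusion of the boundary rays $\arg\lambda=\pi/2-a$ and $\arg\lambda=3\pi/2-b$ coming from the divergence of $\int_a^b\cos^{-2}(\theta+\psi)\,d\theta$ exactly as in the paper. No gaps.
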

	
	\begin{remark}\label{rem:dirichlet sectors}
		Since multiplications with a complex non-zero constant preserve the finiteness of the norm in the Dirichlet space, we understand that the preceding lemma remains true in case the Koenigs domain is $w+S(a,b)$, $-\pi\le a<b<\pi$, for some $w\ne0$.
	\end{remark}

	\subsection{Hyperbolic Semigroups}
	
	We are now ready to proceed to the examination of the point spectrum of $\Gamma_\mathcal{D}$ with respect to the Euclidean geometry of $\Omega$. The main tool in our study will be the three handy results we proved above. We start with hyperbolic semigroups. In his articles \cite{betsakos_eig} and \cite{Betsakos_Bergman}, Betsakos set the framework for computing the point spectrum with respect to the inner geometry of the Koenigs domain, for the Hardy and the Bergman spaces, respectively. Our aim is to act similarly for the Dirichlet space. We will realize, later on, that the Dirichlet space presents vast differences compared to the $H^p$ and the $A_\alpha^p$ spaces.
	
	Given a hyperbolic semigroup $(\phi_t)$ with Koenigs function $h$ and Koenigs domain $\Omega$, we know that $\Omega$ is necessarily contained in a horizontal strip. Let $\Sigma=\{w\in\C:a<\mathrm{Im}w<b\}$, where $a,b\in\mathbb{R}$, $a<b$, be the smallest strip containing $\Omega$. It is known that such a strip exists due to the convexity of $\Omega$ in the positive direction and its width depends on the so-called \textit{spectral value} of $(\phi_t)$ (cf. \cite[Theorem 9.4.10]{Booksem}). Then, clearly, the defining function $\psi_\Omega$ of $(\phi_t)$ has domain $(a,b)$.
	
	We start with a first fundamental lemma providing the smallest and largest possible point spectra.
	
	\begin{lemma}\label{lm:hyperbolic basic condition}
		Let $(\phi_t)$ be a hyperbolic semigroup in $\D$ which induces the semigroup of composition operators $(T_t)$. Suppose that $\Omega$ is the Koenigs domain of $(\phi_t)$ and that $\sigma_\mathcal{D}$ is the point spectrum of the infinitesimal generator of $(T_t)$. Then $\{0\}\subseteq\sigma_{\mathcal{D}}\subseteq iS(0,\pi)\cup\{0\}$. In particular, both inclusions are sharp.
	\end{lemma}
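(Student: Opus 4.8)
The plan is to sandwich $\Omega$ between a horizontal half-strip and a horizontal strip, and then transfer the point spectra of these two standard domains, already computed in Lemma \ref{lm:dirichlet standard domain}, via the monotonicity principle of Lemma \ref{lem:dirichlet spectrum monotonicity}. The inclusion $\{0\}\subseteq\sigma_\mathcal{D}$ is immediate from Remark \ref{rem:dirichlet zero}, so it remains only to control $\sigma_\mathcal{D}\setminus\{0\}$ from above and to verify sharpness.

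For the upper bound, I would first show that $\Omega$ contains a horizontal half-strip. Since $\Omega$ is open, fix $w_0\in\Omega$ and choose $\varepsilon>0$ with $D(w_0,\varepsilon)\subseteq\Omega$. Because $\Omega$ is convex in the positive direction (that is, $\Omega+t\subseteq\Omega$ for all $t\ge0$), each translated disk $D(w_0,\varepsilon)+t$ lies in $\Omega$, and their union over $t\ge0$ is precisely the horizontal half-strip $H:=\{x+iy:x>\mathrm{Re}\,w_0,\ |y-\mathrm{Im}\,w_0|<\varepsilon\}$. Thus $H\subseteq\Omega$. As $H$ is itself convex in the positive direction and a proper subdomain of $\C$, it is the Koenigs domain of a non-elliptic semigroup, so Lemma \ref{lem:dirichlet spectrum monotonicity} applies and yields that the point spectrum of $\Omega$ is contained in that of $H$. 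By part (a) of Lemma \ref{lm:dirichlet standard domain}, the latter equals $iS(0,\pi)\cup\{0\}$, whence $\sigma_\mathcal{D}\subseteq iS(0,\pi)\cup\{0\}$.

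For the reverse direction I would invoke that $\Omega\subseteq\Sigma$, where $\Sigma$ is the minimal horizontal strip containing $\Omega$, which exists by the hyperbolicity of $(\phi_t)$. Since $\Sigma$ is again an admissible Koenigs domain, Lemma \ref{lem:dirichlet spectrum monotonicity} gives that the point spectrum of $\Omega$ contains that of $\Sigma$, which by part (b) of Lemma \ref{lm:dirichlet standard domain} equals $\{0\}$; this reconfirms $0\in\sigma_\mathcal{D}$ and, together with the upper bound, establishes the two-sided inclusion $\{0\}\subseteq\sigma_\mathcal{D}\subseteq iS(0,\pi)\cup\{0\}$.

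Sharpness is witnessed by the comparison domains themselves: a half-strip $\Omega=H$ realizes $\sigma_\mathcal{D}=iS(0,\pi)\cup\{0\}$, while a strip $\Omega=\Sigma$ realizes $\sigma_\mathcal{D}=\{0\}$, and both are Koenigs domains of genuine hyperbolic semigroups since each has a minimal containing strip of finite width, forcing a positive spectral value. The only steps requiring care are verifying that $\Omega$ truly contains a half-strip (handled by the openness plus positive-direction convexity argument above) and confirming that both comparison domains are admissible Koenigs domains so that the monotonicity lemma is applicable; beyond these points the argument is a direct squeeze requiring no further computation.
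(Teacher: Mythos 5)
Your proof is correct and follows essentially the same route as the paper's: the paper likewise observes that $\Omega$ contains a horizontal half-strip and is contained in a horizontal strip, and transfers the spectra computed in Lemma \ref{lm:dirichlet standard domain}(a),(b) to $\Omega$ (via Proposition \ref{prop:dirichlet spectrum koenigs}, which is the substance of your monotonicity step), with the two standard domains witnessing sharpness. The only negligible imprecision is that the union of the translated disks $D(w_0,\varepsilon)+t$, $t\ge0$, is slightly larger than the half-strip $H$ rather than ``precisely'' equal to it, but it contains $H$, which is all you need.
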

	\begin{proof}
		As we mentioned already, $\Omega$ is contained in a horizontal strip. On the other side, by its convexity in the positive direction, $\Omega$ certainly contains a horizontal half-strip stretching to the right. Therefore, a combination of Proposition \ref{prop:dirichlet spectrum koenigs} and Lemma \ref{lm:dirichlet standard domain}(a),(b) yields the desired inclusion and its sharpness. 
	\end{proof}
	
	So if $\lambda\in\sigma_\mathcal{D}\setminus\{0\}$, it is necessary that $\mathrm{Re}\lambda<0$. With this important piece of information in mind, we may prove our first main result, Theorem \ref{thm:intro defining function}.
	
	\begin{proof}[Proof of Theorem \ref{thm:intro defining function}]
		(a) Let $h$ be the Koenigs function of $(\phi_t)$ and fix $\lambda\ne0$. By Proposition \ref{prop:dirichlet spectrum koenigs}, $\lambda\in\sigma_\mathcal{D}\setminus\{0\}$ if and only if $e^{\lambda h}\in\mathcal{D}$ which is equivalent to
		\begin{equation}\label{eq:th1 1}
			\int\limits_{\D}\left|\left(e^{\lambda h}\right)'(z)\right|^2dA(z)=|\lambda|^2\int\limits_{\Omega}|e^{2\lambda w}|dA(w)<+\infty,
		\end{equation}
		where as usual we carry out the change of variables $w=h(z)$. Recall that with the help of the defining function, the Koenigs domain may be parametrized as $\Omega=\{x+iy\in\C:a<y<b, \; x>\psi_\Omega(y)\}$. Therefore, using Cartesian coordinates in \eqref{eq:th1 1}, we see that $\lambda\in\sigma_\mathcal{D}\setminus\{0\}$ if and only if
		\begin{equation}\label{eq:th1 2}
			+\infty>\int\limits_{a}^{b}\int\limits_{\psi_\Omega(y)}^{+\infty}e^{2\mathrm{Re}(\lambda(x+iy))}dxdy=\int\limits_{a}^{b}e^{-2\mathrm{Im}\lambda y}\int\limits_{\psi_\Omega(y)}^{+\infty}e^{2\mathrm{Re}\lambda x}dxdy.
		\end{equation}
		Since $\Omega$ is contained in a horizontal strip, the quantity $e^{-2\mathrm{Im}\lambda y}$ is bounded above and below, and thus does not factor in the finiteness of the latter integral. As a consequence, continuing the calculations in \eqref{eq:th1 2}, we deduce that $\lambda\in\sigma_\mathcal{D}\setminus\{0\}$ if and only if
		\begin{equation}
			+\infty>\int\limits_{a}^{b}\int\limits_{\psi_\Omega(y)}^{+\infty}e^{2\mathrm{Re}\lambda x}dxdy=-\frac{1}{2\mathrm{Re}\lambda}\int\limits_{a}^{b}e^{2\mathrm{Re}\lambda\psi_\Omega(y)}dy,
		\end{equation}
		where the last equality is taking into account the fact that $\mathrm{Re}\lambda<0$ by Lemma \ref{lm:hyperbolic basic condition}. Hence, the desired result follows.
		
		The equivalent condition for the inclusion of $\lambda\ne0$ in the point spectrum, clearly implies that $\sigma_\mathcal{D}\setminus\{0\}$ (recall that $0$ is always an eigenvalue) is invariant with respect to vertical translations. This is because only the real part of $\lambda$ factors into the integral, while the condition is independent of the imaginary part. Therefore, $\sigma_\mathcal{D}\setminus\{0\}$ is either empty, or a vertical strip, or a vertical half-plane. Set
		$$c_\Omega=\inf\left\{c\in(-\infty,0):\int\limits_{a}^{b}e^{2c\psi_\Omega(y)}dy<+\infty\right\}.$$
		
		(b) Suppose, first, that $c_\Omega=0$. Let $\lambda\in\sigma_{\mathcal{D}}\setminus\{0\}$. Then, $\mathrm{Re}\lambda<0$ and due to (a), $\int_{a}^{b}e^{2\mathrm{Re}\lambda \psi_\Omega(y)}dy<+\infty$. Therefore, $c_\Omega\le\mathrm{Re}\lambda<0$. Contradiction! Consequently, $\sigma_\mathcal{D}\setminus\{0\}=\emptyset$.
		
		(c) Suppose $c_\Omega\in(-\infty,0)$. Then, if $\lambda\ne0$ satisfies $\mathrm{Re}\lambda\in(c_\Omega,0)$, the definition of $c_\Omega$ implies $\int_{a}^{b}e^{2\mathrm{Re}\lambda\psi_\Omega(y)}dy<+\infty$, which through (a) yields $\lambda\in\sigma_\mathcal{D}\setminus\{0\}$. On the other hand, if $\mathrm{Re}\lambda<c_\Omega$, the corresponding integral is infinite and hence $\lambda\notin\sigma_\mathcal{D}\setminus\{0\}$ due to (a). Finally, the added condition of the hypothesis shows that if $\mathrm{Re}\lambda=c_\Omega$, $\int_{a}^{b}e^{2\mathrm{Re}\lambda\psi_\Omega(y)}dy=+\infty$ and hence $\lambda\ne\sigma_\mathcal{D}\setminus\{0\}$. The invariance of the point spectrum minus $0$ with respect to vertical translations implies the desired outcome.
		
		(d) Following exactly the same steps as in the previous case, the result follows.
		
		(e) Suppose $c_\Omega=-\infty$. Then due to the infimum, it is evident that $\int_{a}^{b}e^{2\mathrm{Re}\lambda\psi_\Omega(y)}dy<+\infty$, for all $\lambda\ne0$ with $\mathrm{Re}\lambda<0$. Therefore, by (a), the point spectrum $\sigma_\mathcal{D}$ is the whole left half-plane plus the eigenvalue $0$.
	\end{proof}
	
	From our theorem, we understand that the whole shape of the boundary of $\Omega$ plays a role in determining the point spectrum $\sigma_\mathcal{D}$. On the other side, the width $b-a$ is not present in the description of $\sigma_\mathcal{D}$ and neither is the width of any strip contained inside $\Omega$. In particular, a combination of Proposition \ref{prop:dirichlet spectrum koenigs} and Lemma \ref{lm:dirichlet standard domain}(a) shows that if $\Omega$ contains any horizontal strip, then the point spectrum is trivial. As a result, comparing with the results of Betsakos, composition operators induced by hyperbolic semigroups and acting on the Dirichlet space, produce vast dissimilarities to those acting on Bergman and Hardy spaces. Finally, a direct corollary of Theorem \ref{thm:intro defining function} is that $\sigma_\mathcal{D}$ is starlike with respect to $0$ and hence connected. By starlike with respect to $0$, we mean that for any $\lambda\in\sigma_\mathcal{D}\setminus\{0\}$, the whole line segment $[0,\lambda]$ is contained in the point spectrum. We will see that this connectedness fails, in general, in the case of composition operators induced by parabolic semigroups.
	
	Nevertheless, as mentioned in the Introduction, the defining function of a semigroup can be significantly difficult to manipulate. For this reason, we aim to discover a more transparent connection of the point spectrum with the geometry of the Koenigs domain, even if it leads to a partial result. For each $x\in\mathbb{R}$, consider $\Omega_x$ to be the set of the imaginary parts of all points in $\Omega$ with real part $x$, i.e.
	$$\Omega_x:=\{y\in\mathbb{R}:x+iy\in\Omega\}, \quad x\in\mathbb{R}.$$
	Since $\Omega$ is simply connected and convex in the positive direction, each set $\Omega_x$ consists of countably many intervals, it is always a subset of $(a,b)$ due to the strip $\Sigma$, and the family of sets $\{\Omega_x\}$ is increasing (not necessarily strictly). Therefore, given $x\in\mathbb{R}$, we may write 
	$$\Omega_x=\bigcup\limits_{j\in J_x}I_j,$$
	where $J_x$ is a set of countably many indices depending on $x$ and $\{I_j\}$ is a family of pairwise disjoint bounded open intervals in $\mathbb{R}$. Set $\ell_\Omega(x):=\textup{length}(\Omega_x)=\sum_{j\in J_x}\textup{length}(I_j)$. If for some $x_0\in\mathbb{R}$ (and thus for all $x\le x_0$) we have $\Omega_{x_0}=\emptyset$, we just define $\ell_\Omega(x_0)=0$. Through this well-defined process, we see that $\ell_\Omega(x)$ is basically the length of the intersection of $\Omega$ with the vertical line $\{\mathrm{Re}z=x\}$. Evidently, $\ell_\Omega(x)$ is a (not necessarily strictly) increasing and upper semi-continuous function of $x$ and the minimality of $\Sigma$ means that $\lim_{x\to+\infty}\ell_\Omega(x)=b-a$. On top of that, the limit $\lim_{x\to-\infty}\ell_\Omega(x)$ always exists in $[0,b-a]$.
	
	Having mentioned a part of the required exposition, we may now proceed to a fundamental lemma which will be the main tool in better understanding point spectra in this setting. Even though its proof follows easy basic steps, this result will be crucial for our examination.
	
	\begin{lemma}\label{lm:hyperbolic x parametrization}
		Let $(\phi_t)$ be a hyperbolic semigroup in $\D$ which induces the semigroup of composition operators $(T_t)$. Suppose that $\Omega$ is the Koenigs domain of $(\phi_t)$ and that $\sigma_\mathcal{D}$ is the point spectrum of the infinitesimal generator of $(T_t)$. Then $\lambda\in\sigma_{\mathcal{D}}\setminus\{0\}$ if and only if
		\begin{equation}
			\int\limits_{-\infty}^{0}e^{2\mathrm{Re}\lambda x}\ell_\Omega(x)dx<+\infty.
		\end{equation}
	\end{lemma}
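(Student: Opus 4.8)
The plan is to run exactly the same opening moves as in the proof of Theorem \ref{thm:intro defining function}. By Proposition \ref{prop:dirichlet spectrum koenigs}, for $\lambda\ne0$ the membership $\lambda\in\sigma_\mathcal{D}\setminus\{0\}$ is equivalent to $e^{\lambda h}\in\mathcal{D}$, and applying the change of variables $w=h(z)$ (legitimate since the Koenigs function $h$ is univalent) turns the Dirichlet semi-norm into an area integral over $\Omega$:
\[
\mathcal{D}\!\left(e^{\lambda h}\right)=|\lambda|^2\int_{\Omega}\bigl|e^{2\lambda w}\bigr|\,dA(w)=|\lambda|^2\int_{\Omega}e^{2\mathrm{Re}(\lambda w)}\,dA(w).
\]
So the whole question reduces to deciding when $\int_{\Omega}e^{2\mathrm{Re}(\lambda w)}\,dA(w)<+\infty$.

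Next I would slice $\Omega$ by vertical lines rather than by the defining function. Writing $w=x+iy$ and $\lambda=\mathrm{Re}\lambda+i\,\mathrm{Im}\lambda$, we have $\mathrm{Re}(\lambda w)=\mathrm{Re}\lambda\cdot x-\mathrm{Im}\lambda\cdot y$. Because the integrand is nonnegative, Tonelli's theorem applies unconditionally and lets me integrate first in $y$ over the slice $\Omega_x=\{y:x+iy\in\Omega\}$:
\[
\int_{\Omega}e^{2\mathrm{Re}(\lambda w)}\,dA(w)=\int_{-\infty}^{+\infty}e^{2\mathrm{Re}\lambda\cdot x}\left(\int_{\Omega_x}e^{-2\mathrm{Im}\lambda\cdot y}\,dy\right)dx.
\]
The key simplification is that $\Omega\subseteq\Sigma=\{a<\mathrm{Im}w<b\}$, so every slice $\Omega_x$ lives in the bounded interval $(a,b)$; hence the factor $e^{-2\mathrm{Im}\lambda\cdot y}$ is bounded above and below by positive constants $C_1,C_2$ depending only on $\lambda,a,b$. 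This gives $C_1\,\ell_\Omega(x)\le\int_{\Omega_x}e^{-2\mathrm{Im}\lambda\cdot y}\,dy\le C_2\,\ell_\Omega(x)$, using $\ell_\Omega(x)=\mathrm{length}(\Omega_x)$, so the $y$-integral is comparable to $\ell_\Omega(x)$ and drops out of any finiteness question. Thus finiteness of the area integral is equivalent to $\int_{-\infty}^{+\infty}e^{2\mathrm{Re}\lambda\cdot x}\ell_\Omega(x)\,dx<+\infty$.

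It remains to replace the full real line by $(-\infty,0)$, and this is where Lemma \ref{lm:hyperbolic basic condition} does the work: any candidate eigenvalue $\lambda\ne0$ satisfies $\mathrm{Re}\lambda<0$, so the tail over $[0,+\infty)$ is automatically finite because $\ell_\Omega$ is uniformly bounded by $b-a$ there, giving $\int_0^{+\infty}e^{2\mathrm{Re}\lambda\cdot x}\ell_\Omega(x)\,dx\le(b-a)\int_0^{+\infty}e^{2\mathrm{Re}\lambda\cdot x}\,dx<+\infty$. Consequently the convergence of the full integral is governed solely by its behavior as $x\to-\infty$, yielding the stated criterion. I do not expect a serious obstacle here; the only points that need a careful word are the justification of the interchange of integrals (handled by nonnegativity/Tonelli) and the observation that the imaginary-part factor is harmless precisely because $\Omega$ sits inside the horizontal strip $\Sigma$, which is exactly the hyperbolic hypothesis in force.
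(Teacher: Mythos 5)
Your proposal is correct and follows essentially the same route as the paper's own proof: reduce to the area integral over $\Omega$ via Proposition \ref{prop:dirichlet spectrum koenigs}, slice by vertical lines, use the horizontal-strip containment to show the $e^{-2\mathrm{Im}\lambda y}$ factor is comparable to a constant, and invoke Lemma \ref{lm:hyperbolic basic condition} together with the bound $\ell_\Omega\le b-a$ to discard the integral over $[0,+\infty)$. Your explicit two-sided bound on the $y$-factor and the Tonelli justification are slightly more careful than the paper's wording, but the argument is the same.
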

	\begin{proof}
		Working exactly as in previous proofs, we have that $\lambda\in\sigma_\mathcal{D}\setminus\{0\}$ if and only if
		\begin{equation}\label{eq:basic lemma dirichlet 1}
			\int\limits_{\Omega}e^{2\mathrm{Re}\lambda\mathrm{Re}w}e^{-2\mathrm{Im}\lambda\mathrm{Im}w}dA(w)<+\infty.
		\end{equation}
		But due to our previous discussion, $\Omega$ may be parametrized as $\Omega=\{x+iy\in\C:x\in\mathbb{R}, \;y\in\Omega_x\}$. Thus, working on \eqref{eq:basic lemma dirichlet 1}, $\lambda\in\sigma_\mathcal{D}\setminus\{0\}$ if and only if
		\begin{equation}\label{eq:basic lemma dirichlet 2}
			\int\limits_{-\infty}^{+\infty}e^{2\mathrm{Re}\lambda x}\int\limits_{\Omega_x}e^{-2\mathrm{Im}\lambda y}dydx<+\infty.
		\end{equation}
		However, since $(\phi_t)$ is hyperbolic, $\Omega$ is contained inside a horizontal strip. Therefore, for the fixed $\lambda$, the quantity $e^{-2\mathrm{Im}\lambda y}$ is uniformly bounded in $\Omega$. Returning to \eqref{eq:basic lemma dirichlet 2}, this signifies that $\lambda$ is in the point spectrum if and only if
		\begin{equation}\label{eq:basic lemma dirichlet 3}
			\int\limits_{-\infty}^{+\infty}e^{2\mathrm{Re}\lambda x}\int\limits_{\Omega_x}dydx=\int\limits_{-\infty}^{+\infty}e^{2\mathrm{Re}\lambda x}\ell_\Omega(x)dx<+\infty.
		\end{equation}
		Recall that since there exists a minimal strip containing the convex in the positive direction simply connected domain $\Omega$, there exists $L\in(0,+\infty)$ such that $\ell_\Omega(x)\to L$, as $x\to+\infty$. In addition, $\ell_\Omega(x)$ is increasing, so $\ell_\Omega(x)\le L$, for all $x\in\mathbb{R}$. Therefore,
		\begin{equation}\label{eq:basic lemma dirichlet 4}
			\int\limits_{0}^{+\infty}e^{2\mathrm{Re}\lambda x}\ell_\Omega(x)dx \le L\int\limits_{0}^{+\infty}e^{2\mathrm{Re}\lambda x}dx.
		\end{equation}
		But by Lemma \ref{lm:hyperbolic basic condition}, $\mathrm{Re}\lambda<0$ and hence \eqref{eq:basic lemma dirichlet 4} implies that $\int_{0}^{+\infty}e^{2\mathrm{Re}\lambda x}\ell_\Omega(x)dx<+\infty$ in any case.  Combining \eqref{eq:basic lemma dirichlet 3} with \eqref{eq:basic lemma dirichlet 4}, we understand that $\lambda\ne0$ belongs to the point spectrum of the infinitesimal generator if and only if $\int_{-\infty}^{0}e^{2\mathrm{Re}\lambda x}\ell_\Omega(x)dx<+\infty$, and we have the desired result.

	\end{proof}
	
	We continue with some more notation before proving the main result of the section. Let $(\phi_t)$ be a hyperbolic semigroup in $\D$ and maintain the usual notation $h, \Omega, (T_t), \Gamma_\mathcal{D}, \ell_\Omega(x)$. Write
	$$\delta_\Omega:=\lim\limits_{x\to-\infty}\dfrac{\log\ell_\Omega(x)}{2x},$$
	a priori assuming that the limit exists, where we allow $\log\ell_\Omega(x)=-\infty$ in case $\ell_\Omega(x)=0$ for some $x\in\mathbb{R}$. By construction, it is straightforward that $0\le\delta_\Omega\le+\infty$, for any Koenigs domain $\Omega$ of a hyperbolic semigroup. In simple terms, the number $\delta_\Omega$ is a measure of how fast the boundary $\partial\Omega$ diverges from the boundary of the smallest horizontal strip containing $\Omega$, as $x\to-\infty$.
	
	To demonstrate the necessity of the assumption that the limit exists, we now provide an example where the corresponding limit inferior and limit superior are actually different.
	
	\begin{example}\label{ex:limit does not exist}
		Consider the sequences $\{a_n\}, \{b_n\}\subset(0,1)$ with $a_n=e^{-2^n}$ and $b_n=e^{-2^{n+1}}+e^{-3^n}$. For each $n\in\mathbb{N}$ set $x_n=-2^n+ia_n$ and $y_n=-2^n-\frac{1}{n^2}+ib_n$. By construction, we have $\mathrm{Re}x_n>\mathrm{Re}y_n>\mathrm{Re}x_{n+1}$ and $\mathrm{Im}x_n>\mathrm{Im}y_n>\mathrm{Im}x_{n+1}$, for every $n\in\mathbb{N}$. Consider the union of rectilinear segments
		$$\gamma=\bigcup\limits_{n=1}^{+\infty}\left([x_n,y_n]\cup[y_n,x_{n+1}]\right)$$
		and let $\Omega_\gamma$ denote the simply connected domain bounded by $\gamma$, the real axis and the vertical segment $[-2,-2+ie^{-2}]$. Finally, let $\Omega:=\Omega_\gamma\cup\{x+iy\in\C:x\ge-2, \; 0<y<e^{-2}\}$. By construction, $\Omega$ is a simply connected domain that is convex in the positive direction and contained in a horizontal strip. Therefore, it is the Koenigs domain of some hyperbolic semigroup. Then
		$$\liminf\limits_{x\to-\infty}\dfrac{\log\ell_\Omega(x)}{2x}\le\lim\limits_{n\to+\infty}\dfrac{\log\ell_\Omega(-2^n)}{-2\cdot2^n}=\lim\limits_{n\to+\infty}\dfrac{\log a_n}{-2^{n+1}}=\lim\limits_{n\to+\infty}\dfrac{\log e^{-2^n}}{-2^{n+1}}=\dfrac{1}{2}.$$
		On the other hand,
		$$\limsup\limits_{x\to-\infty}\dfrac{\log\ell_\Omega(x)}{2x}\ge\lim\limits_{n\to+\infty}\dfrac{\log\ell_\Omega\left(-2^n-\frac{1}{n^2}\right)}{-2\cdot 2^n-\frac{2}{n^2}}=\lim\limits_{n\to+\infty}\dfrac{\log b_n}{-2^{n+1}-\frac{2}{n^2}}=\lim\limits_{n\to+\infty}\dfrac{\log(e^{-2^{n+1}}+e^{-3^n})}{-2^{n+1}-\frac{2}{n^2}}=1.$$
		As a result, clearly $\delta_\Omega$ does not exist.
	\end{example}
	
	Other than that, we are going to need one more number depending on the Euclidean geometry of $\Omega$. It is clear that $\Omega$ itself has infinite area since it always contains a horizontal half-strip. We may separate $\Omega$ into the two sets $\Omega^+$ and $\Omega^-$, where $\Omega^+:=\{w\in\Omega:\mathrm{Re}w\ge0\}$ and $\Omega^-:=\{w\in\Omega:\mathrm{Re}w< 0\}$. Again, the area of $\Omega^+$ is infinite. On the other hand, the area of $\Omega^-$ might be finite. We write $W(\Omega):=\mathrm{area}(\Omega^-)\in[0,+\infty]$ and we may see that
	$$W(\Omega)=\int\limits_{-\infty}^{0}\ell_\Omega(x)dx.$$
	Again, the number $W(\Omega)$ is an indicator of how fast $\partial\Omega$ moves away from the boundary of the smallest horizontal strip containing $\Omega$, as $x\to-\infty$, and depends solely on the Euclidean geometry of $\Omega$. The values of the two numbers $\delta_\Omega$ and $W(\Omega)$ will play a principal role in determining the point spectrum of $\Gamma_\mathcal{D}$, as demonstrated below. 
	
	\begin{proof}[Proof of Theorem \ref{thm:intro hyperbolic dirichlet}]
		(a) Certainly $0\in\sigma_{\mathcal{D}}$. Suppose that $\lambda\in\sigma_{\mathcal{D}}\setminus\{0\}$. By Lemma \ref{lm:hyperbolic basic condition}, we have $\mathrm{Re}\lambda<0$ and hence $e^{2\mathrm{Re}\lambda x}>1$, for all $x\in(-\infty,0)$. Therefore
		$$\int\limits_{-\infty}^{0}e^{2\mathrm{Re}\lambda x}\ell_\Omega(x)dx\ge\int\limits_{-\infty}^{0}\ell_\Omega(x)=W(\Omega)=+\infty.$$
		However, due to Lemma \ref{lm:hyperbolic x parametrization}, we obtain $\lambda\notin\sigma_{\mathcal{D}}$. Contradiction! This leads to $\sigma_{\mathcal{D}}\setminus\{0\}=\emptyset$ and the desired outcome.
		
		(b) Because of Lemma \ref{lm:hyperbolic basic condition}, it suffices to prove that $iS(0,\pi)\subseteq\sigma_{\mathcal{D}}$. Towards this goal, let $\lambda\in\C$ with $\mathrm{Re}\lambda<0$. Pick $M>0$ such that $-M<\mathrm{Re}\lambda$ or equivalently $M+\mathrm{Re}\lambda>0$. Due to our hypothesis, for this $M$ we may find some $x_0\in(-\infty,0)$ such that $\log\ell_\Omega(x)/(2x)>M$, for all $x\in(-\infty,x_0)$. Elementary rearrangements show that $\ell_\Omega(x)<e^{2Mx}$, for all $x\in(-\infty,x_0)$. Then
		\begin{eqnarray*}
			\int\limits_{-\infty}^{0}e^{2\mathrm{Re}\lambda x}\ell_\Omega(x)dx &=& \int\limits_{-\infty}^{x_0}e^{2\mathrm{Re}\lambda x}\ell_\Omega(x)dx+\int\limits_{x_0}^{0}e^{2\mathrm{Re}\lambda x}\ell_\Omega(x)dx \\
			&\le&\int\limits_{-\infty}^{x_0}e^{2(\mathrm{Re}\lambda+M)x}dx+\int\limits_{x_0}^{0}e^{2\mathrm{Re}\lambda x}\ell_\Omega(x)dx\\
			&<&+\infty,
		\end{eqnarray*}
		since on one hand, $2(\mathrm{Re}\lambda+M)>0$ and on the other hand, the integral from $x_0$ to $0$ is clearly finite. By Lemma \ref{lm:hyperbolic x parametrization} we conclude that $\lambda\in\sigma_{\mathcal{D}}$. The arbitrariness in the choice of $\lambda\in iS(0,\pi)$ implies the desired inclusion.
		
		(c) Due to our assumptions and combining Lemma \ref{lm:hyperbolic x parametrization} with the vertical invariance of $\sigma_\mathcal{D}$, we deduce that $\{w\in\C:\mathrm{Re}w=-\delta_\Omega\}\subseteq\sigma_{\mathcal{D}}$. The shape of the point spectrum imposed by Theorem \ref{thm:intro defining function} shows that $\widetilde{\Sigma}_{\delta_\Omega}\subseteq\sigma_{\mathcal{D}}$. Clearly $0\in\sigma_{\mathcal{D}}$ as well. For the reverse inclusion, assume that $\lambda\in\C$ with $\mathrm{Re}\lambda<-\delta_\Omega$. We are going to prove that $\lambda\notin\sigma_{\mathcal{D}}$. Fix $\epsilon\in(0,-\mathrm{Re}\lambda-\delta_\Omega)$. Due to the assumption on the limit, for this $\epsilon$ we may find $x_1\in(-\infty,0)$ such that $\log\ell_\Omega(x)/(2x)<\delta_\Omega+\epsilon$, for all $x\in(-\infty,x_1)$. Quick calculations yield $\ell_\Omega(x)>e^{2(\delta_\Omega+\epsilon)x}$, for all $x\in(-\infty,x_1)$. Therefore,
		\begin{equation*}
			\int\limits_{-\infty}^{0}e^{2\mathrm{Re}\lambda x}\ell_\Omega(x)dx\ge\int\limits_{-\infty}^{x_1}e^{2(\mathrm{Re}\lambda+\delta_\Omega+\epsilon)x}dx=+\infty,
		\end{equation*}
		since by construction $2(\mathrm{Re}\lambda+\delta_\Omega+\epsilon)<0$. Returning to Lemma \ref{lm:hyperbolic x parametrization} we understand that $\lambda\notin\sigma_{\mathcal{D}}$. Combining with the fact that $\sigma_{\mathcal{D}}\subseteq iS(0,\pi)\cup\{0\}$, we get the required equality for the point spectrum.
		
		(d) Working identically as in the previous case, we may reach the point where $\sigma_{\mathcal{D}}\subseteq \widetilde{\Sigma}_{\delta_\Omega}\cup\{0\}$. Due to the integral condition in our hypothesis, we further get that $\sigma_{\mathcal{D}}\subseteq \Sigma_{\delta_\Omega}\cup\{0\}$. Reversely, let $\lambda\in\C$ with $\mathrm{Re}\lambda\in(-\delta_\Omega,0)$. Fix $\epsilon\in(0,\mathrm{Re}\lambda+\delta_\Omega)$. For this $\epsilon$ we may find $x_2\in(-\infty,0)$ such that $\log\ell_\Omega(x)/(2x)>\delta_\Omega-\epsilon$, for all $x\in(-\infty,x_2)$, due to the existence of the limit. Again, we can rearrange to obtain $\ell_\Omega(x)<e^{2(\delta_\Omega-\epsilon)x}$, for all $x\in(-\infty,x_2)$. Then
		\begin{equation*}
			\int\limits_{-\infty}^{0}e^{2\mathrm{Re}\lambda x}\ell_\Omega(x)dx\le\int\limits_{x_2}^{0}e^{2\mathrm{Re}\lambda x}\ell_\Omega(x)dx+\int\limits_{-\infty}^{x_2}e^{2(\mathrm{Re}\lambda+\delta_\Omega-\epsilon)x}dx<+\infty,
		\end{equation*}
		since $2(\mathrm{Re}\lambda+\delta_\Omega-\epsilon)>0$, while the integral from $x_2$ to $0$ is evidently finite. Returning to Lemma \ref{lm:hyperbolic x parametrization}, we see that $\lambda\in\sigma_{\mathcal{D}}$. Combining with the fact that $0$ is always an eigenvalue, we deduce that $\Sigma_{\delta_\Omega}\cup\{0\}\subseteq\sigma_{\mathcal{D}}$ which produces the equality.
		
		(e) By Lemma \ref{lm:hyperbolic basic condition} it suffices to show that any $\lambda\in iS(0,\pi)$ is not an eigenvalue. Fix such a $\lambda$ with $\mathrm{Re}\lambda<0$ and find $\epsilon\in(0,-\mathrm{Re}\lambda)$. For this $\epsilon$, our assumption on the limit implies the existence of some $x_3\in(-\infty,0)$ so that $\log\ell_\Omega(x)/(2x)<\epsilon$, for all $x\in(-\infty,x_3)$. Similarly to before, this means that $\ell_\Omega(x)>e^{2\epsilon x}$, for all $x\in(-\infty,x_3)$. Consequently,
		\begin{equation*}
			\int\limits_{-\infty}^{0}e^{2\mathrm{Re}\lambda x}\ell_\Omega(x)dx\ge\int\limits_{-\infty}^{x_3}e^{2(\mathrm{Re}\lambda+\epsilon)x}dx=+\infty,
		\end{equation*}
		because $2(\mathrm{Re}\lambda+\epsilon)<0$. Once again, Lemma \ref{lm:hyperbolic x parametrization} yields the desired outcome.
	\end{proof}
	
	As mentioned in the Introduction, in case the limit $\delta_\Omega$ does not exist, we may obtain helpful inclusions with the use of the corresponding limit inferior and limit superior. One of the reasons Theorem \ref{thm:intro hyperbolic dirichlet} fails to provide a complete description is the fact that $\ell_\Omega(x)$ does not fully characterize the semigroup $(\phi_t)$ or the Koenigs domain $\Omega$. As a matter of fact, there can be a variety of Koenigs domains (for instance horizontal strips with horizontal slits) that have the exact same length $\ell_\Omega(x)$, for all $x\in\mathbb{R}$. On the contrary, as demonstrated by Theorem \ref{thm:intro defining function} we are in need of a firmer grasp on the geometry of $\Omega$.
	
	Our theorems on the point spectrum in the Dirichlet space allow us to state certain easy but useful corollaries with respect to the dynamics of the hyperbolic holomorphic semigroup.
	
	\begin{corollary}\label{cor:hyperbolic dirichlet}
		Let $(\phi_t)$ be a hyperbolic semigroup in $\D$ which induces the semigroup of composition operators $(T_t)$. Suppose that $\sigma_\mathcal{D}$ is the point spectrum of the infinitesimal generator of $(T_t)$. Then:
		\begin{enumerate}
			\item[\textup{(a)}] If $(\phi_t)$ is a group, $\sigma_{\mathcal{D}}=\{0\}$.
			\item[\textup{(b)}] If $(\phi_t)$ has a repelling fixed point, $\sigma_{\mathcal{D}}=\{0\}$.
			\item[\textup{(c)}] If $(\phi_t)$ has no boundary fixed points other than its Denjoy--Wolff point, $\sigma_{\mathcal{D}}=iS(0,\pi)\cup\{0\}$.
		\end{enumerate}
	\end{corollary}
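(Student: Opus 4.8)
The plan is to convert each dynamical hypothesis on $(\phi_t)$ into a statement about the Euclidean shape of the Koenigs domain $\Omega$, and then invoke the machinery already developed for hyperbolic semigroups: Lemma \ref{lm:dirichlet standard domain}, Lemma \ref{lm:hyperbolic basic condition}, Lemma \ref{lm:hyperbolic x parametrization}, and Theorem \ref{thm:intro hyperbolic dirichlet}. In every case $0\in\sigma_\mathcal{D}$ automatically, and Lemma \ref{lm:hyperbolic basic condition} confines the remainder of the spectrum to the open left half-plane, so throughout it suffices to determine $\sigma_\mathcal{D}\setminus\{0\}$.

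For (a) and (b), both of which should yield $\sigma_\mathcal{D}=\{0\}$, I would reduce to a strip-type geometry. If $(\phi_t)$ is a group, each $\phi_t$ is an automorphism, so the Koenigs equation \eqref{eq:koenigs} extends to all $t\in\mathbb{R}$; taking images of $\D$ and using $\phi_t(\D)=\D$ gives $\Omega=\Omega+t$ for every $t\in\mathbb{R}$. A simply connected proper subdomain of $\C$ invariant under all horizontal translations is a horizontal strip, so Lemma \ref{lm:dirichlet standard domain}(b) gives $\sigma_\mathcal{D}=\{0\}$. For (b), the geometric input is the standard correspondence between boundary regular repelling fixed points and half-strips: a finite repelling value at a fixed point $\sigma\ne\tau$ forces $\Omega$ to contain a half-strip $\{w:\mathrm{Re}\,w<x_0,\ c<\mathrm{Im}\,w<d\}$ with $d-c>0$ (see \cite{Booksem}). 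Then $\ell_\Omega(x)\ge d-c>0$ for $x<x_0$, whence $W(\Omega)=+\infty$ and Theorem \ref{thm:intro hyperbolic dirichlet}(a) gives $\sigma_\mathcal{D}=\{0\}$. Since a hyperbolic group always possesses a repelling fixed point, (a) is in fact a special case of (b), but the strip argument is more direct.

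For (c), I would show that the absence of boundary fixed points other than $\tau$ forces $\Omega$ to be bounded to the left, i.e. $\inf_{w\in\Omega}\mathrm{Re}\,w>-\infty$. Arguing contrapositively: if $\mathrm{Re}\,w$ were unbounded below on $\Omega$, then, $\Omega$ being convex in the positive direction and contained in a strip, it would admit an end towards $\mathrm{Re}\,w\to-\infty$ whose prime end is fixed by the translation flow $w\mapsto w+t$, producing a boundary fixed point distinct from $\tau$. Granting this, there is $x_0$ with $\ell_\Omega(x)=0$ for all $x<x_0$; since $\ell_\Omega$ is bounded above by the width of the strip, for every $\lambda$ with $\mathrm{Re}\,\lambda<0$ we get $\int_{-\infty}^{0}e^{2\mathrm{Re}\lambda x}\ell_\Omega(x)\,dx=\int_{x_0}^{0}e^{2\mathrm{Re}\lambda x}\ell_\Omega(x)\,dx<+\infty$. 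Lemma \ref{lm:hyperbolic x parametrization} then places every such $\lambda$ in $\sigma_\mathcal{D}$, and with Lemma \ref{lm:hyperbolic basic condition} this gives $\sigma_\mathcal{D}=iS(0,\pi)\cup\{0\}$; alternatively one checks $W(\Omega)<+\infty$ and $\delta_\Omega=+\infty$ and applies Theorem \ref{thm:intro hyperbolic dirichlet}(b).

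The spectral computations are routine once the geometry is known, so the real work lies in the dictionary between the dynamics of $(\phi_t)$ and the shape of $\Omega$. I expect the main obstacle to be the claim used in (c) — that unboundedness of $\Omega$ to the left necessarily manufactures a boundary fixed point other than $\tau$ — which must be argued through the prime-end description of boundary fixed points rather than through the integral estimates; the half-strip correspondence needed for (b) is of the same nature but somewhat more standard.
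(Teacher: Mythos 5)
Your proposal is correct and follows essentially the same route as the paper: (a) reduces to $\Omega$ being a horizontal strip and Lemma \ref{lm:dirichlet standard domain}(b); (b) uses that a repelling fixed point forces $\Omega$ to contain a (half-)strip so that $W(\Omega)=+\infty$ and Theorem \ref{thm:intro hyperbolic dirichlet}(a) applies; (c) argues contrapositively via prime ends that unboundedness of $\mathrm{Re}\,w$ on $\Omega$ would create a boundary fixed point other than $\tau$, then concludes with Theorem \ref{thm:intro hyperbolic dirichlet}(b). The only difference is that the paper pins the geometric dictionary to specific results in \cite{Booksem} (Propositions 9.3.12 and 13.6.2, Theorem 13.5.5) rather than re-deriving it.
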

	\begin{proof}
		(a) If $(\phi_t)$ is a group with Koenigs domain $\Omega$, we know by \cite[Proposition 9.3.12]{Booksem} that $\Omega$ is exactly a horizontal strip. Therefore, Lemma \ref{lm:dirichlet standard domain}(b) provides the desired point spectrum.
		
		(b) Suppose that $(\phi_t)$ has a repelling fixed point $\sigma$ and that $\Omega$ is its Koenigs domain. By \cite[Theorem 13.5.5]{Booksem} we know that $\Omega$ contains a horizontal strip. Then, $\ell_\Omega(x)$ remains positive and bounded by below for all $x\in\mathbb{R}$. We easily see that $W(\Omega)=+\infty$ and hence by Theorem \ref{thm:intro hyperbolic dirichlet}(a), we calculate the point spectrum.
		
		(c) Again set $\Omega$ the Koenigs domain of $(\phi_t)$ and $h$ its Koenigs function. We claim that our hypothesis signifies that $\inf_{w\in\Omega}\mathrm{Re}w>-\infty$. Suppose, conversely, that $\inf_{w\in\Omega}\mathrm{Re}w=-\infty$ and thus there exists a sequence $\{w_n\}\subset\Omega$ such that the sequence $\{\mathrm{Re}w_n\}$ strictly decreases to $-\infty$, as $n\to+\infty$. As a result, there exists at least one prime end of $\Omega$, different from the one corresponding to the Denjoy--Wolff point of $(\phi_t)$, containing $\infty$ in its impression. But $(\phi_t)$ is hyperbolic and hence $\Omega$ is a convex in the positive direction simply connected domain contained in a horizontal strip. Straightforward geometric considerations show that the existence of such a prime end implies that necessarily there exists at least one prime end $\xi$ of $\Omega$, different from the one corresponding to the Denjoy--Wolff point, such that its impression is exactly $\infty$. By Carath\'{e}odory's Theorem, this prime end $\xi$ corresponds through $h^{-1}$ to a point $\sigma\in\partial\D$. But our construction also dictates that $\lim_{z\to\sigma}\mathrm{Re}h(z)=-\infty$. By \cite[Proposition 13.6.2]{Booksem} we get that $\sigma$ is a boundary fixed point for $(\phi_t)$, different from the Denjoy--Wolff point. Due to our hypothesis, we have reached a contradiction. Therefore, indeed $\inf_{w\in\Omega}\mathrm{Re}w>-\infty$ and $\Omega$ is contained in a horizontal half-strip that stretches to the right. For this reason, we may find some $x_0\in\mathbb{R}$ such that $\ell_\Omega(x)=0$, for all $x<x_0$. Trivially $W(\Omega)<+\infty$ and $\delta_\Omega=+\infty$. By Theorem \ref{thm:intro hyperbolic dirichlet}(b) we get the desired point spectrum for $\Gamma_\mathcal{D}$.
	\end{proof}

	Our last corollary provides several non-trivial examples where the point spectrum of $\Gamma_\mathcal{D}$ may be explicitly computed. Since our results have already provided examples where $\sigma_{\mathcal{D}}$ is either $\{0\}$ or $iS(0,\pi)\cup\{0\}$, we only lack examples where this spectrum is a strip. To end the subsection, we will construct an example of a semigroup (or rather its Koenigs domain) which will give $\sigma_{\mathcal{D}}=\Sigma_{\epsilon}\cup\{0\}$, for any initially prescribed $\epsilon\in(0,+\infty)$. Tweaking slightly this construction, we will also provide an example where the spectrum is equal to $\widetilde{\Sigma}_\epsilon\cup\{0\}$.
	
	\begin{example}
		Fix $\epsilon\in(0,+\infty)$. Consider the convex in the positive direction simply connected domain 
		$$\Omega:=\{x+iy\in\C:x<0,\; |y|<e^{2\epsilon x}\}\cup\{x+iy\in\C:x\ge0,\; |y|<1\}.$$
		Because of its geometry, $\Omega$ is the Koenigs domain of a semigroup $(\phi_t)$. It is evident that $\Omega$ is contained in a horizontal strip and therefore $(\phi_t)$ is hyperbolic. Let $(T_t)$ be the induced semigroup of composition operators and $\Gamma_\mathcal{D}$ its infinitesimal generator. Clearly, $\ell_\Omega(x)=2$ for $x\ge0$, while $\ell_\Omega(x)=2e^{2\epsilon x}$ for $x<0$. It is easy to calculate through the integral in the definition that $W(\Omega)=1/\epsilon<+\infty$. In addition, it is readily verified that $\delta_\Omega=\epsilon$. Finally, 
		$$\int\limits_{-\infty}^{0}e^{-2\epsilon x}\ell_\Omega(x)dx=2\int\limits_{-\infty}^{0}e^{-2\epsilon x}e^{2\epsilon x}dx=+\infty.$$ 
		By Theorem \ref{thm:intro hyperbolic dirichlet}(d) we deduce that $\sigma_{\mathcal{D}}=\Sigma_\epsilon\cup\{0\}$.
	\end{example}
	
	\begin{example}
		For our second example, again fix $\epsilon\in(0,+\infty)$ and set
		$$\Omega:=\left\{x+iy\in\C:x<-1, \; |y|<\dfrac{e^{2\epsilon x}}{x^2}\right\}\cup\left\{x+iy\in\C:x\ge-1, \; |y|<\dfrac{1}{e^{2\epsilon}}\right\}.$$
		As previously, $\Omega$ is a convex in the positive direction simply connected domain contained in a horizontal strip and as such it corresponds to a hyperbolic semigroup $(\phi_t)$. Set $(T_t)$ the induced semigroup of composition operators and $\Gamma_\mathcal{D}$ its respective infinitesimal generator. Obviously, the current Koenigs domain is a subset of the Koenigs domain of the previous example. Therefore, by Lemma \ref{lem:dirichlet spectrum monotonicity} we already expect $\sigma_{\mathcal{D}}\supseteq\Sigma_\epsilon\cup\{0\}$. We also intuitively comprehend that this time $\partial\Omega$ moves away from the boundary of the smallest horizontal strip containing $\Omega$ more rapidly than in the previous example. As a matter of fact, this difference will lead to a larger spectrum. Since we have a smaller Koenigs domain than before, we immediately get $W(\Omega)<+\infty$. Furthermore, one may find $\delta_\Omega=\epsilon$ since $\ell_\Omega(x)=2e^{2\epsilon x}/x^2$, for $x<-1$. Finally, 
		$$\int\limits_{-\infty}^{-1}e^{-2\epsilon x}\ell_\Omega(x)dx=2\int\limits_{-\infty}^{-1}e^{-2\epsilon x}\dfrac{e^{2\epsilon x}}{x^2}dx=2\int\limits_{-\infty}^{-1}\dfrac{dx}{x^2}=2<+\infty,$$
		and the corresponding integral from $-1$ to $0$ is obviously finite as well. By Theorem \ref{thm:intro hyperbolic dirichlet}(c) we obtain $\sigma_{\mathcal{D}}=\widetilde{\Sigma}_\epsilon\cup\{0\}$.
	\end{example}

	\subsection{Parabolic Semigroups}
	Having concluded our study on the point spectrum of the infinitesimal generators induced by hyperbolic semigroups, we proceed to parabolic semigroups. We commence with certain general results concerning all parabolic semigroups. Then, we will proceed to a separation between those of positive hyperbolic step and those of zero hyperbolic step. Recall that the Koenigs domain of a parabolic semigroup is not contained in any horizontal strip. 
	We start with a basic lemma that gives a first idea of the smallest possible and the largest possible point spectra that can arise from parabolic semigroups. Its proof is easy and can be derived from arguments which are related to hyperbolic semigroups. Later on we will see that the following result is indeed sharp.
	
	\begin{lemma}\label{lm:phs general}
		Let $(\phi_t)$ be a parabolic semigroup in $\D$ which induces the semigroup of composition operators $(T_t)$. Suppose that $\sigma_\mathcal{D}$ is the point spectrum of the infinitesimal generator of $(T_t)$. Then
		$$\{0\}\subseteq\sigma_\mathcal{D}\subseteq iS(0,\pi)\cup\{0\}.$$
	\end{lemma}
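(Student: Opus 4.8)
The plan is to establish the two inclusions separately, mirroring the structure of the proof of Lemma \ref{lm:hyperbolic basic condition}. The lower inclusion $\{0\}\subseteq\sigma_\mathcal{D}$ is immediate and independent of the type of the semigroup: constant functions lie in the Dirichlet space, so $0$ is always an eigenvalue of $\Gamma_\mathcal{D}$, as recorded in Remark \ref{rem:dirichlet zero}.

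For the upper inclusion, the crucial point is that although the Koenigs domain $\Omega$ of a parabolic semigroup is no longer trapped inside a horizontal strip (in contrast with the hyperbolic case treated in Lemma \ref{lm:hyperbolic basic condition}), it is still \emph{convex in the positive direction}, that is $\Omega+t\subseteq\Omega$ for all $t\ge0$. First I would show that this forces $\Omega$ to contain a horizontal half-strip stretching to the right. Indeed, fixing any $w_0\in\Omega$ and a radius $r>0$ with $D(w_0,r)\subseteq\Omega$, convexity in the positive direction gives $D(w_0,r)+t\subseteq\Omega$ for every $t\ge0$; taking the union over $t\ge0$ shows that the open half-strip $H:=\{x+iy:x>\mathrm{Re}\,w_0,\ |y-\mathrm{Im}\,w_0|<r\}$ is contained in $\Omega$.

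Once this is in place, the conclusion follows from the monotonicity of the point spectrum with respect to the Koenigs domain. Since $H$ is itself a horizontal half-strip, it is a valid Koenigs domain of some (hyperbolic) semigroup, and from $H\subseteq\Omega$ Lemma \ref{lem:dirichlet spectrum monotonicity} yields $\sigma_\mathcal{D}\subseteq\sigma_\mathcal{D}(H)$, where $\sigma_\mathcal{D}(H)$ denotes the point spectrum associated with $H$. By Lemma \ref{lm:dirichlet standard domain}(a) we have $\sigma_\mathcal{D}(H)=iS(0,\pi)\cup\{0\}$, which gives exactly the desired containment $\sigma_\mathcal{D}\subseteq iS(0,\pi)\cup\{0\}$.

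There is no serious obstacle here; the only step requiring a moment of care is the elementary geometric claim that convexity in the positive direction, combined with openness of $\Omega$, produces a genuine half-strip inside $\Omega$. Alternatively, one could bypass the monotonicity lemma and argue directly: by Proposition \ref{prop:dirichlet spectrum koenigs}, a nonzero $\lambda$ lies in $\sigma_\mathcal{D}$ if and only if $\int_\Omega|e^{2\lambda w}|\,dA(w)<+\infty$, and restricting this integral to the half-strip $H\subseteq\Omega$ shows it diverges whenever $\mathrm{Re}\,\lambda\ge0$ (on $H$ the factor coming from the imaginary part is bounded, while $\int_{\mathrm{Re}\,w_0}^{+\infty}e^{2\mathrm{Re}\,\lambda\,x}\,dx=+\infty$). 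Either way, every nonzero eigenvalue must satisfy $\mathrm{Re}\,\lambda<0$, i.e.\ lie in $iS(0,\pi)$.
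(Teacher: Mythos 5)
Your proposal is correct and follows essentially the same route as the paper: $0$ is an eigenvalue via the constant functions, and the upper bound comes from the fact that convexity in the positive direction forces $\Omega$ to contain a horizontal half-strip, after which Proposition \ref{prop:dirichlet spectrum koenigs}, the monotonicity of the point spectrum, and Lemma \ref{lm:dirichlet standard domain}(a) give $\sigma_\mathcal{D}\subseteq iS(0,\pi)\cup\{0\}$. The only difference is that you spell out the elementary half-strip construction and offer the direct integral divergence argument as an alternative, both of which the paper leaves implicit.
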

	\begin{proof}
		By Remark \ref{rem:dirichlet zero} it is clear that $0\in\sigma_\mathcal{D}$. On the other side, the Koenigs domain $\Omega$ of $(\phi_t)$ is simply connected and convex in the positive direction. Therefore, it contains a horizontal half-strip stretching to the right. Thus, a combination of Proposition \ref{prop:dirichlet spectrum koenigs} with Lemma \ref{lm:dirichlet standard domain}(a) provides the desired right-hand side inclusion.
	\end{proof}
	
	We now consider semigroups of composition operators induced by parabolic semigroups of positive hyperbolic step. As we already mentioned in the Introduction, the main geometric properties that will be examined with regard to the point spectrum in this case are the inner and outer arguments. These numbers are inextricably linked with the intrinsic Euclidean geometry of the Koenigs domain $\Omega$ of $(\phi_t)$. For this reason and to give emphasis on the relation to the Koenigs domain, we denote them by $\theta_\Omega$ and $\Theta_\Omega$. We proceed to the main theorem of this subsection, Theorem \ref{thm:intro phs dirichlet}. Recall that its statement refers to the case when the Koenigs domain is contained in the usual upper half-plane. If $\Omega$ was contained in any other horizontal half-plane, the same statement holds, but slightly modified in the obvious sense.

	\begin{proof}[Proof of Theorem \ref{thm:intro phs dirichlet}]
		(a) Due to the outer argument, for every $\theta\in(0,\pi)$, we may find some $w_\theta\in\C$ such that $\Omega\subset w_\theta+S(0,\theta)$. Each set $w_\theta+S(0,\theta)$ is simply connected and convex in the positive direction and hence corresponds to some non-elliptic semigroup. A combination of Proposition \ref{prop:dirichlet spectrum koenigs}, Lemma \ref{lm:dirichlet standard domain}(c) and Remark \ref{rem:dirichlet sectors} shows that $\sigma_\mathcal{D}\supseteq iS(0,\pi-\theta)\cup\{0\}$, for all $\theta\in(0,\pi)$. Ergo 
		$$\sigma_\mathcal{D}\supseteq \{0\} \cup \left(i\bigcup_{\theta\in(0,\pi)}S(0,\pi-\theta)\right)$$ which means that $\sigma_\mathcal{D}\supseteq iS(0,\pi)\cup\{0\}$. By Lemma \ref{lm:phs general} we acquire the equality.
		
		(b) Exactly as in the previous case, the outer argument leads to $\sigma_\mathcal{D}\supseteq iS(0,\pi-\theta_\Omega)\cup\{0\}$. On the other side, due to the inner argument, we have that for each $\theta\in(0,\theta_\Omega)$ there exists some $w_\theta\in\C$ such that $w_\theta+S(0,\theta)\subset\Omega$. Again, combining Proposition \ref{prop:dirichlet spectrum koenigs}, Lemma \ref{lm:dirichlet standard domain}(c) and Remark \ref{rem:dirichlet sectors} yields $\sigma_\mathcal{D}\subseteq iS(0,\pi-\theta)\cup\{0\}$, for all $\theta\in(0,\theta_\Omega)$. Therefore, the point spectrum is a subset of the intersection of all these sets which signifies that $\sigma_\mathcal{D}\subseteq i\overline{S(0,\pi-\theta_\Omega)}\cup\{0\}$. Nevertheless, by Lemma \ref{lm:phs general} we know that a non-zero point on the imaginary axis cannot belong to the point spectrum. For this reason, $\sigma_\mathcal{D}\subseteq iS(0,\pi-\theta_\Omega]\cup\{0\}$.
		
		(c) Following identical steps with the inner argument as in the preceding case, we reach the point $\sigma_\mathcal{D}\subseteq iS(0,\pi-\theta)$, for all $\theta\in(0,\pi)$. As a consequence, $\sigma_\mathcal{D}\subseteq\{it:t\ge0\}$. Once more, Lemma \ref{lm:phs general} certifies that $\sigma_\mathcal{D}=\{0\}$.
		
		(d) An execution of arguments similar to those in the previous case with the inner and outer arguments provides the desired inclusion. We omit its proof for the sake of avoiding redundancy.
	\end{proof}
	
	Theorem \ref{thm:intro phs dirichlet} leads to a straightforward corollary relating the point spectrum of the infinitesimal generator to the dynamical properties of the inducing semigroup.
	
	\begin{corollary}\label{cor:phs dirichlet}
		Let $(\phi_t)$ be a parabolic semigroup of positive hyperbolic step in $\D$ which induces the semigroup of composition operators $(T_t)$. Suppose that $\sigma_\mathcal{D}$ is the point spectrum of the infinitesimal generator of $(T_t)$. Then:
		\begin{enumerate}
			\item[\textup{(a)}] If $(\phi_t)$ is a group, $\sigma_\mathcal{D}=\{0\}$.
			\item[\textup{(b)}] If $(\phi_t)$ has a repelling fixed point, $\sigma_\mathcal{D}=\{0\}$.
			\item[\textup{(c)}] If $(\phi_t)$ is of finite shift, $\sigma_\mathcal{D}=\{0\}$. 
		\end{enumerate}
	\end{corollary}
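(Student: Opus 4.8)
My plan is to reduce all three parts to one geometric condition and then quote Theorem \ref{thm:intro phs dirichlet}(c). Normalize so that $\Omega\subseteq S(0,\pi)$; then $\theta_\Omega\le\Theta_\Omega\le\pi$, so as soon as I can verify that the inner argument is maximal, $\theta_\Omega=\pi$, I automatically obtain $\theta_\Omega=\Theta_\Omega=\pi$, and Theorem \ref{thm:intro phs dirichlet}(c) yields $\sigma_\mathcal{D}=\{0\}$. Thus each item becomes the claim that the stated dynamical hypothesis forces $\theta_\Omega=\pi$, equivalently that $\Omega$ contains a translate $q+S(0,\pi)$ of the full half-plane.

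Parts (a) and (c) would then follow at once from cited structure results. For (c), a semigroup of finite shift is parabolic of positive hyperbolic step and has inner argument $\pi$ by \cite[Proposition 3.2]{our_finiteshift}, as recalled in Subsection \ref{sub:semigroups}; Theorem \ref{thm:intro phs dirichlet}(c) closes the case. For (a), if $(\phi_t)$ is a group then \cite[Proposition 9.3.12]{Booksem} identifies its Koenigs domain as exactly a half-plane, so after the normalizing translation $\Omega=S(0,\pi)$ and trivially $\theta_\Omega=\Theta_\Omega=\pi$.

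The substance of the corollary is part (b), which I would prove by adapting the prime-end argument from the proof of Corollary \ref{cor:hyperbolic dirichlet}(c). A repelling boundary fixed point $\sigma\in\partial\mathbb{D}$, distinct from the Denjoy--Wolff point, corresponds through $h^{-1}$ and Carath\'{e}odory's theorem to a prime end of $\Omega$ at which $\mathrm{Re}\,h\to-\infty$; by \cite[Proposition 13.6.2]{Booksem} this is precisely the boundary behavior detecting a fixed point, and the finiteness of the repelling angular derivative pins down the channel leading to this prime end as one of finite imaginary width. Using that $\Omega$ is convex in the positive direction and, being of positive hyperbolic step, is not contained in any horizontal strip, I would argue that the portion of $\Omega$ lying above this finite-width channel must contain a half-plane $q+S(0,\pi)$, whence $\theta_\Omega=\pi$ and Theorem \ref{thm:intro phs dirichlet}(c) again applies.

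The hard part is exactly this geometric implication in (b): converting the analytic datum ``$\sigma$ is a repelling fixed point'' into the containment of a half-plane inside $\Omega$. In the hyperbolic setting the analogous statement (a repelling fixed point yields a strip inside $\Omega$) is \cite[Theorem 13.5.5]{Booksem}; here the target is a half-plane rather than a strip, and the extra room is supplied precisely by the positive-hyperbolic-step hypothesis that prevents $\Omega$ from sitting in a strip. Making the passage from ``a channel of finite width exists'' to ``a half-plane is contained'' rigorous --- that is, controlling how $\partial\Omega$ opens up above the repelling channel --- is where the real care is needed.
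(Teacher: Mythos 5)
Parts (a) and (c) of your proposal are fine and essentially coincide with the paper's proof: for a group the Koenigs domain is a half-plane, and for finite shift the inner argument is $\pi$ by \cite[Proposition 3.2]{our_finiteshift}, so Theorem \ref{thm:intro phs dirichlet}(c) (or, equivalently, Lemma \ref{lm:dirichlet standard domain}(d) with Remark \ref{rem:dirichlet sectors}) applies. The problem is part (b), where your entire strategy rests on the claim that a repelling fixed point forces $\theta_\Omega=\pi$, i.e.\ that $\Omega$ must contain a translated half-plane $q+S(0,\pi)$. This claim is false. The paper itself supplies a counterexample in the subsection on parabolic semigroups of positive hyperbolic step: take $\Omega=S(0,\pi)\setminus\bigcup_{n}L_n$, where $L_n=\{a_n-t:t\ge0\}$ are leftward horizontal slits whose endpoints $a_n$ lie on the ray $\arg w=\alpha\in(0,\pi)$ with $\mathrm{Im}\,a_n\uparrow+\infty$. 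This $\Omega$ contains maximal horizontal strips (e.g.\ $\{0<\mathrm{Im}\,w<\mathrm{Im}\,a_1\}$), so by \cite[Theorem 13.5.5]{Booksem} the semigroup has repelling fixed points; yet the slits accumulate at every height, so no half-plane fits inside $\Omega$ and $\theta_\Omega=\alpha<\pi$. Hence the ``channel of finite width opens up into a half-plane'' step that you yourself flag as the delicate one cannot be repaired: it is simply not a consequence of the hypotheses.

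The fix is to abandon the reduction to $\theta_\Omega=\pi$ for part (b) and argue as the paper does, which is both correct and much shorter: a repelling fixed point gives, by \cite[Theorem 13.5.5]{Booksem}, a horizontal strip $\Sigma'\subseteq\Omega$; by the monotonicity Lemma \ref{lem:dirichlet spectrum monotonicity} applied to $\Sigma'\subseteq\Omega$ together with Lemma \ref{lm:dirichlet standard domain}(b) (a horizontal strip has trivial point spectrum in $\mathcal{D}$), one gets $\sigma_\mathcal{D}\subseteq\{0\}$, and Remark \ref{rem:dirichlet zero} gives equality. The moral is that in the Dirichlet space the presence of a strip inside $\Omega$ already annihilates the point spectrum, so Theorem \ref{thm:intro phs dirichlet}(c) --- which needs the much stronger input $\theta_\Omega=\Theta_\Omega=\pi$ --- is the wrong tool for (b); the slit example above is precisely a case where $\sigma_\mathcal{D}=\{0\}$ although $\theta_\Omega<\pi$.
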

	\begin{proof}
		For (a), our configuration dictates that the Koenigs domain of $(\phi_t)$ is necessarily an upper half-plane; cf. \cite[Proposition 9.3.12]{Booksem}. Lemma \ref{lm:dirichlet standard domain}(d) and Remark \ref{rem:dirichlet sectors} provide the desired spectrum. Moreover, (b) may be deduced exactly as Corollary \ref{cor:hyperbolic dirichlet}(b). Finally, suppose that $(\phi_t)$ is of finite shift. By \cite[Proposition 3.2]{our_finiteshift}, we know that the inner argument of $(\phi_t)$ is $\pi$. Clearly, this implies that the outer argument is $\pi$, as well. As a result, Theorem \ref{thm:intro phs dirichlet}(c) yields (c).
	\end{proof}
	
	Before we end the subsection, we lay out several examples illustrating the different cases that can occur in Theorem \ref{thm:intro phs dirichlet}. Lemma \ref{lm:dirichlet standard domain} already provides examples where the point spectrum is exactly an open angular sector plus the eigenvalue $0$. Below we mention some more involved examples which will help illustrate the utility of the ``blueprint'' provided by Theorem \ref{thm:intro phs dirichlet}.
	
	\begin{example}
		Set $\Omega=\{x+iy\in\C:x>0, \;0<y<\sqrt{x}\}$. This domain is simply connected, convex in the positive direction, is contained in the upper half-plane, and is not contained in any horizontal strip. Thus it corresponds to a parabolic semigroup $(\phi_t)$ of positive hyperbolic step. It can be seen that $\theta_\Omega=\Theta_\Omega=0$. So, by Theorem \ref{thm:intro phs dirichlet}(a), $\sigma_\mathcal{D}=iS(0,\pi)\cup\{0\}$.
	\end{example}
	
	In the case of hyperbolic semigroups, the induced point spectrum is either trivial, or a vertical strip bounded by the imaginary axis plus the eigenvalue $0$, or the left half-plane plus the eigenvalue $0$. So a natural conjecture for parabolic semigroups of positive hyperbolic step would be that the point spectrum is either trivial or an angular sector with the upper imaginary semi-axis as one of its sides, plus the eigenvalue $0$. Via the next example, we see that this is not the case.
	
	\begin{example}\label{ex:any strip}
		We will construct a semigroup where the induced point spectrum excluding the eigenvalue $0$ is non-empty and is neither a sector $iS(0,\Theta)$, nor a sector $iS(0,\Theta]$. Consider the domain 
		$$\Omega=\{x+iy\in\C:x>0,\; 0<y<e^{2\alpha x}\}, \quad \alpha>0.$$
		As usual, $\Omega$ corresponds to a triplet $(\phi_t)$, $(T_t)$, $\Gamma_\mathcal{D}$. It can be checked that $\theta_\Omega=\Theta_\Omega=\pi/2$. Hence, by Theorem \ref{thm:intro phs dirichlet}(b), we already know that $iS(0,\pi/2)\cup\{0\}\subseteq \sigma_\mathcal{D} \subseteq iS(0,\pi/2]\cup\{0\}$. So, we only need to verify exactly which $\lambda\in\C$ satisfying $\mathrm{Re}\lambda<0$ and $\mathrm{Im}\lambda=0$ belong to the point spectrum. Fix such a $\lambda$ and set $h$ the Koenigs function of $(\phi_t)$. Then
		\begin{equation*}
			\int\limits_{\D}\left|\left(e^{\lambda h}\right)'(z)\right|^2dA(z)=\int\limits_{\Omega}e^{2\mathrm{Re}(\lambda w)}dA(w)
			=\int\limits_{0}^{+\infty}\int\limits_{0}^{e^{2\alpha x}}e^{2\mathrm{Re}\lambda x}dydx
			=\int\limits_{0}^{+\infty}e^{2(\mathrm{Re}\lambda+\alpha)x}dx
		\end{equation*}
		and the latter integral is finite if and only if $\mathrm{Re}\lambda<-\alpha$. As a result, for all these $\lambda\in\C$, $e^{\lambda h}\in\mathcal{D}$ if and only if $\mathrm{Re}\lambda<-\alpha$. In conclusion,
		$$\sigma_\mathcal{D}=iS(0,\pi/2)\cup(-\infty,-\alpha)\cup\{0\}.$$
	\end{example}
	
	\begin{remark}
		An identical procedure, but this time considering $\Omega=\{x+iy\in\C:x>0,\; 0<y<e^{\sqrt{x}}\}$ provides a semigroup which induces a point spectrum equal to $iS(0,\pi/2]\cup\{0\}$.
	\end{remark}
	
	For our last example concerning parabolic semigroups of positive hyperbolic step, we will examine a semigroup where the inner and outer arguments are different.

	\begin{example}
		Fix $\alpha\in(0,\pi)$ and consider a sequence $\{a_n\}\subset\C$ with strictly increasing to $+\infty$ imaginary parts such that $\arg a_n=\alpha$, for all $n\in\mathbb{N}$. For each $n\in\mathbb{N}$ set $L_n=\{a_n-t:t\ge0\}$ and construct the domain $\Omega=S(0,\pi)\setminus\cup_{n\in\mathbb{N}}L_n$. Evidently $\Omega$ corresponds to a parabolic semigroup $(\phi_t)$ of positive hyperbolic step which induces $(T_t)$ with infinitesimal generator $\Gamma_\mathcal{D}$. By construction, $\theta_\Omega=\alpha$, while $\Theta_\Omega=\pi$. Since $\Omega$ contains a horizontal strip, standard arguments lead to $\sigma_\mathcal{D}=\{0\}$ and thus the outer argument ``prevails'' in this case.
	\end{example}
	
	We finish our study on the Dirichlet space with parabolic semigroups of zero hyperbolic step. Let $(\phi_t)$ be such a semigroup and denote by $h$ its Koenigs function and by $\Omega$ its Koenigs domain. Since $h$ is a univalent mapping of the unit disk, clearly $\Omega\subsetneq\C$ and hence $\partial\Omega$ has at least one point. In addition, $\Omega$ is convex in the positive direction, and hence $\partial\Omega$ contains at least a horizontal half-line stretching to infinity in the negative direction (i.e. with constant imaginary part and decreasing real part). Since $(\phi_t)$ is of zero hyperbolic step, $\Omega$ is not contained in any horizontal half-plane. Thus, we will always assume that $\Omega$ is contained in the slit plane $\C\setminus(-\infty,0]=S(-\pi,\pi)$. Since the geometry of a Koenigs domain is invariant with respect to translations, this normalization does not harm generality.
	
	Recall that for parabolic semigroups of zero hyperbolic step, we need all the notions of upper and lower, inner and outer arguments and that $\theta_\Omega=\theta_\Omega^-+\theta_\Omega^+$ and $\Theta_\Omega=\Theta_\Omega^-+\Theta_\Omega^+$. With the help of Proposition \ref{prop:dirichlet spectrum koenigs} and Lemma \ref{lm:dirichlet standard domain}, and using identical techniques as in the proof of Theorem \ref{thm:intro phs dirichlet}, one may prove Theorem \ref{thm:intro 0hs dirichlet}. For this reason, we opt not to include the proof. 
	
	As was the case with the other types of non-elliptic semigroups, Theorem \ref{thm:intro 0hs dirichlet} provides direct corollaries with respect to the dynamical properties of the initial semigroups. We refrain from explicitly providing them since they are in a very similar vein to those in Corollary \ref{cor:hyperbolic dirichlet} and Corollary \ref{cor:phs dirichlet}. Many examples akin to those in the previous two subsections may be constructed. Before ending the section, we will give one interesting example corresponding to Theorem \ref{thm:intro 0hs dirichlet}(c), as it yields a point spectrum that is not connected.
	
	\begin{example}\label{ex:not connected}
		Let $\Omega=\{x+iy\in\C:x>0,\;-e^x<y<e^x\}$. Obviously, $\Omega$ is a simply connected, convex in the positive direction domain and corresponds to a triplet $(\phi_t), (T_t), \Gamma_\mathcal{D}$. Clearly $\Omega$ is not contained in any horizontal half-plane, and hence $(\phi_t)$ is parabolic of zero hyperbolic step. Easy geometric considerations show that $\theta_\Omega=\Theta_\Omega=\pi$. By Theorem \ref{thm:intro 0hs dirichlet} we know that $\sigma_\mathcal{D}$ is contained in the closed negative semi-axis. Let $h$ denote the Koenigs function of $(\phi_t)$. Then, $\lambda\in\sigma_\mathcal{D}$ if and only if $e^{\lambda h}\in\mathcal{D}$. Certainly, from our discussion above, $\mathrm{Im}\lambda=0$. Moreover, since $0$ is necessarily an eigenvalue, we only care about the case $\mathrm{Re}\lambda<0$. Following the same steps as in preceding proofs and using Cartesian coordinates, $\lambda\in\sigma_\mathcal{D}\setminus\{0\}$ if and only if
		\begin{equation*}
			+\infty>\int\limits_{\Omega}e^{2\mathrm{Re}(\lambda w)}dA(w)=\int\limits_{0}^{+\infty}\int\limits_{-e^x}^{e^x}e^{2\mathrm{Re}\lambda x}dydx
			=2\int\limits_{0}^{+\infty}e^{(2\mathrm{Re}\lambda+1)x}dx.
		\end{equation*}
		But the latter integral is finite if and only if $2\mathrm{Re}\lambda+1<0$ which leads to $\sigma_\mathcal{D}=(-\infty,-1/2)\cup\{0\}$.
	\end{example}
	
	Of course, with a suitable slight modification (see e.g. Example \ref{ex:any strip}), we could have constructed the Koenigs domain in such a way that the point spectrum would be equal to $(-\infty,\alpha)\cup\{0\}$, for any $\alpha<0$, or even the whole interval $(-\infty,0]$. As a last outtake from this section, we understand that, in general, the point spectrum in the Dirichlet space is not connected. We will see in Section \ref{sec:bergman and hardy}, that this is not true in the setting of the Hardy and Bergman spaces, where the point spectrum is necessarily connected.

	\section{\quad Growth Estimates}\label{sec:growth bounds}
	In this brief section, we are going to leave point spectra aside and work with other notions in the study of operator theory. Given a semigroup $(T_t)$ of composition operators acting on a Banach space $X$, our first objective will be to estimate the norm $||T_t||_X$ of the operators $T_t:X\to X$, $t\ge0$. In place of $X$ we will put the Bergman spaces $A^p_\alpha$, $p\ge1$, $\alpha>-1$, and the Hardy spaces $H^p$, $p\ge1$. In both spaces, the semigroup $(T_t)$ is strongly continuous. So, it also makes sense to compute its type with respect to each space and the corresponding spectral radii. We disregard the Dirichlet space $\mathcal{D}$, since by \cite[Theorem 2]{Siskakis-Dirichlet}, given any semigroup $(\phi_t)$ in $\D$ which induces the semigroup $(T_t)$, the type $\omega_\mathcal{D}$ is always equal to $0$. As a consequence, the spectral radius $r_\mathcal{D}(T_t)$ of each composition operator is equal to $1$.
	
	\begin{remark}
		Let $(T_t)$ be a semigroup of composition operators acting on the Dirichlet space with infinitesimal generator $\Gamma_\mathcal{D}$. Denote by $\sigma_\mathcal{D}^f$ its full spectrum. Since the growth bound of $(T_t)$ is $0$, we know that $\sigma_\mathcal{D}^f$ lies in the closure of the left half-plane. In addition, $\sigma_\mathcal{D}^f$ is a closed set containing the corresponding point spectrum $\sigma_\mathcal{D}$. As a result, Theorem \ref{thm:intro hyperbolic dirichlet}(b), Theorem \ref{thm:intro phs dirichlet}(a) and Theorem \ref{thm:intro 0hs dirichlet}(a) provide a wide range of semigroups where the full spectrum is equal to $i\overline{S(0,\pi)}$. Note that the process of explicitly finding the full spectrum of an operator is at times extremely demanding.
	\end{remark}
	
	\hiddennumberedsubsection{Bergman Spaces}
	
	We start with the Bergman spaces $A^p_\alpha$, $p\ge1$, $\alpha>-1$. To simplify the notation, the norm with respect to the Bergman space $A^p_\alpha$ will be written as $||\cdot||_{p,\alpha}:=||\cdot||_{A^p_\alpha}$. Likewise, we set $\omega_{p,\alpha}:=\omega_{A^p_\alpha}$ and $r_{p,\alpha}:=r_{A^p_\alpha}$ the growth bound and the spectral radius, respectively.
	
	In order to prove our result, we will need the following fundamental inequality about the norm of a composition operator in a Bergman space.
	
	\begin{lemma}[{\cite[Section 11.3]{zhu}}]\label{lm:bergman estimate}
		Let $\phi:\D\to\D$ be holomorphic and consider the composition operator $T_\phi:A^p_\alpha\to A^p_\alpha$, $p\ge1$, $\alpha>-1$, with $T_\phi(f)=f\circ\phi$. Then
		$$\left(\dfrac{1}{1-|\phi(0)|^2}\right)^{(2+\alpha)/p} \le ||T_\phi||_{p,\alpha} \le \left(\dfrac{1+|\phi(0)|}{1-|\phi(0)|}\right)^{(2+\alpha)/p}.$$
	\end{lemma}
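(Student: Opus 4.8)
The plan is to prove the two inequalities separately, in each case reducing to the reference point $a:=\phi(0)$ by means of the disk automorphism $\sigma_a(z)=\frac{a-z}{1-\bar a z}$, which satisfies the standard identities $\sigma_a(0)=a$, $\sigma_a\circ\sigma_a=\mathrm{id}$,
\[
1-|\sigma_a(w)|^2=\frac{(1-|a|^2)(1-|w|^2)}{|1-\bar a w|^2},\qquad |\sigma_a'(w)|^2=\frac{(1-|a|^2)^2}{|1-\bar a w|^4}.
\]
The upper bound will rest on Littlewood's subordination principle, and the lower bound on a sharp pointwise bound for the evaluation functionals of $A^p_\alpha$.

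For the upper bound, first I would factor $\phi=\sigma_a\circ\psi$ with $\psi:=\sigma_a\circ\phi$, so that $\psi(0)=\sigma_a(a)=0$ and $T_\phi=T_\psi\circ T_{\sigma_a}$, whence $\|T_\phi\|_{p,\alpha}\le\|T_\psi\|_{p,\alpha}\,\|T_{\sigma_a}\|_{p,\alpha}$. Littlewood's subordination theorem gives $M_p(r,f\circ\psi)\le M_p(r,f)$ for every $r$ whenever $\psi(0)=0$, where $M_p(r,g)^p=\frac{1}{2\pi}\int_0^{2\pi}|g(re^{i\theta})|^p\,d\theta$; integrating these integral means against the radial weight $(1-r^2)^\alpha r\,dr$ collapses to $\|T_\psi\|_{p,\alpha}\le 1$. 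For the automorphism factor, the change of variables $z=\sigma_a(w)$ together with the two identities above produces the exact formula
\[
\|f\circ\sigma_a\|_{p,\alpha}^p=(1-|a|^2)^{\alpha+2}\int_{\D}|f(w)|^p\frac{(1-|w|^2)^\alpha}{|1-\bar a w|^{2\alpha+4}}\,dA(w),
\]
and the estimate $|1-\bar a w|\ge 1-|a|$ (legitimate since $2\alpha+4>0$) reduces this to $\|T_{\sigma_a}\|_{p,\alpha}\le\left(\frac{1+|a|}{1-|a|}\right)^{(\alpha+2)/p}$, which is exactly the asserted upper bound.

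For the lower bound I would first establish the sharp evaluation estimate $|f(a)|\le(\alpha+1)^{1/p}(1-|a|^2)^{-(2+\alpha)/p}\|f\|_{p,\alpha}$. At the origin this comes from the sub-mean value property of the subharmonic function $|f|^p$: integrating $|f(0)|^p\le\frac{1}{2\pi}\int_0^{2\pi}|f(re^{i\theta})|^p\,d\theta$ against $(1-r^2)^\alpha r\,dr$ yields $|f(0)|^p\le(\alpha+1)\|f\|_{p,\alpha}^p$. To move the pole to $a$, I would apply the same automorphism change of variables to the analytic function $F(w):=f(\sigma_a(w))(1-\bar a w)^{-(2\alpha+4)/p}$ (well defined since $1-\bar a w$ omits $0$ on $\D$), for which $|F|^p(1-|w|^2)^\alpha$ is precisely the integrand above and $F(0)=f(a)$; this gives $\|f\|_{p,\alpha}^p=(1-|a|^2)^{\alpha+2}\|F\|_{p,\alpha}^p$ and hence the claimed evaluation bound. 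Finally, applying the origin estimate to $g:=T_\phi f=f\circ\phi$, whose value at $0$ is $f(a)$, gives $\|T_\phi f\|_{p,\alpha}^p\ge\frac{|f(a)|^p}{\alpha+1}$; testing against the extremal kernel $f(z)=(1-\bar a z)^{-(2\alpha+4)/p}$ (the case $F\equiv\text{const}$, where the evaluation inequality is an equality) then forces the ratio $\|T_\phi f\|_{p,\alpha}/\|f\|_{p,\alpha}$ up to $(1-|a|^2)^{-(2+\alpha)/p}$.

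The routine half is the upper bound, once Littlewood subordination is granted. The delicate point, and the one I would watch most carefully, is the sharp constant in the lower bound: both the identification of the extremal test function as a power of the Bergman kernel and the verification that the point-evaluation inequality is \emph{attained} there (so that no constant is lost) hinge on the exact automorphism Jacobian computation. For that reason I would carry out the change of variables precisely rather than lean on order-of-magnitude Forelli--Rudin estimates, which would recover only the correct power of $1-|a|^2$ but not the sharp constant demanded by the statement.
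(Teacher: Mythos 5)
Your proof is correct and is essentially the classical argument from the cited source (Zhu, Section 11.3), which the paper itself does not reproduce: Littlewood subordination plus the automorphism change of variables for the upper bound, and the sharp point-evaluation estimate attained at the power of the Bergman kernel for the lower bound. All the key computations (the Jacobian identity, the constancy of $F$ for the extremal $f$, and the cancellation of the $(\alpha+1)$ normalization in the operator-norm ratio) check out.
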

	
	Combining the inequalities of Lemma \ref{lm:bergman estimate} with the hyperbolic distance formula in \eqref{eq:hyperbolic distance 0}, we infer that given a holomorphic semigroup $(\phi_t)$ inducing $(T_t)$,
	\begin{equation}\label{eq:bergman hyperbolic estimate}
		\log\dfrac{1}{4}+\dfrac{2(2+\alpha)}{p}d_{\D}(0,\phi_t(0)) \le \log||T_t||_{p,\alpha} \le \dfrac{2(2+\alpha)}{p}d_{\D}(0,\phi_t(0)), \quad t\ge0.
	\end{equation}

	Theorem \ref{thm:total speed} and Lemma \ref{lm:total speed finite shift} allow us to extract the following result.

	\begin{corollary}\label{cor:type bergman}
		Let $(\phi_t)$ be a non-elliptic semigroup in $\D$ which induces the semigroup of composition operators $(T_t)$.
		\begin{enumerate}
			\item[\textup{(a)}] If $(\phi_t)$ is hyperbolic with spectral value $\mu>0$, then $\omega_{p,\alpha}=(2+\alpha)\mu/p$, for all $p\ge1$, $\alpha>-1$, and $r_{p,\alpha}(T_t)=e^{(2+\alpha)\mu t/p}$, for all $p\ge1$, $\alpha>-1$ and all $t\ge0$.
			\item[\textup{(b)}] If $(\phi_t)$ is parabolic, then $\omega_{p,\alpha}=0$, for all $p\ge1$, $\alpha>-1$, and $r_{p,\alpha}(T_t)=1$, for all $p\ge1$, $\alpha>-1$ and all $t\ge0$.
			\item[\textup{(c)}] If $(\phi_t)$ is of finite shift, then
			$$\lim\limits_{t\to+\infty}\dfrac{\log||T_t||_{p,\alpha}}{\log t}=\dfrac{2(2+\alpha)}{p}, \quad\text{for all }p\ge1,\; \alpha>-1.$$
		\end{enumerate}
	\end{corollary}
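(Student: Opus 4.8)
The plan is to derive all three statements from the two-sided estimate \eqref{eq:bergman hyperbolic estimate} by a single squeeze argument, evaluating the speed asymptotics of Theorem \ref{thm:total speed} and Lemma \ref{lm:total speed finite shift} at the base point $z=0$ so that the hyperbolic displacement $d_{\D}(0,\phi_t(0))$ appearing in \eqref{eq:bergman hyperbolic estimate} is exactly the quantity those results govern.

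For part (a), I would divide \eqref{eq:bergman hyperbolic estimate} throughout by $t>0$ and let $t\to+\infty$. The additive constant contributes $\log(1/4)/t\to0$, so the lower and upper bounds share the same limit; Theorem \ref{thm:total speed} with $z=0$ gives $d_{\D}(0,\phi_t(0))/t\to\mu/2$, so both bounds converge to $\frac{2(2+\alpha)}{p}\cdot\frac{\mu}{2}=\frac{(2+\alpha)\mu}{p}$. By the definition of the type \eqref{eq:type} this forces $\omega_{p,\alpha}=(2+\alpha)\mu/p$, and then the spectral radius formula $r_{p,\alpha}(T_t)=e^{\omega_{p,\alpha}t}=e^{(2+\alpha)\mu t/p}$ is immediate from the definition $r_X(T_t):=e^{\omega_X t}$. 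Part (b) is the same computation run with $\mu=0$ (the spectral value of any parabolic semigroup), giving $\omega_{p,\alpha}=0$ and hence $r_{p,\alpha}(T_t)=1$ for all $t\ge0$.

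For part (c) the only change is the normalizing factor: I would divide \eqref{eq:bergman hyperbolic estimate} by $\log t$ rather than by $t$. Again the constant term dies, since $\log(1/4)/\log t\to0$, while Lemma \ref{lm:total speed finite shift} with $z=0$ supplies $d_{\D}(0,\phi_t(0))/\log t\to1$, so squeezing the two bounds yields
\begin{equation*}
    \lim_{t\to+\infty}\frac{\log||T_t||_{p,\alpha}}{\log t}=\frac{2(2+\alpha)}{p}.
\end{equation*}
I do not expect a genuine obstacle: the corollary is essentially a translation of the hyperbolic-speed limits through the sandwich \eqref{eq:bergman hyperbolic estimate}. The only point meriting a moment's attention is that the speed limits must be taken at the specific starting point $z=0$ in order to match the $\phi_t(0)$ occurring in Lemma \ref{lm:bergman estimate}; this is harmless since both Theorem \ref{thm:total speed} and Lemma \ref{lm:total speed finite shift} are stated for every $z\in\D$.
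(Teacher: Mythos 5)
Your proposal is correct and is exactly the argument the paper intends: the authors state the corollary immediately after \eqref{eq:bergman hyperbolic estimate} with the remark that Theorem \ref{thm:total speed} and Lemma \ref{lm:total speed finite shift} ``allow us to extract'' it, and your squeeze argument (dividing by $t$ for (a)--(b), by $\log t$ for (c), with the additive constant vanishing in the limit) is precisely that deduction.
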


	\hiddennumberedsubsection{Hardy Spaces}
	
	We continue with the Hardy spaces $H^p$, $p\ge1$, of the unit disk. Let $(\phi_t)$ be a non-elliptic semigroup in $\D$ which induces the semigroup of composition operators $(T_t)$. Our first aim is to estimate the norm of each $T_t$, $t\ge0$, with respect to the Hardy spaces. In order to ease up the notation, we will just write $||T_t||_p:=||T_t||_{H^p}$, $t\ge0$, $p\ge1$.
	
	Analogously to the Bergman spaces, we are in need of a result estimating the norm of a composition operator in $H^p$.

	\begin{lemma}[{\cite[Corollary 3.7]{Cowen-Maccluer}}]\label{lm:hardy estimate}
		Let $\phi:\D\to\D$ be holomorphic and consider the composition operator $T_\phi:H^p\to H^p$, $p\ge1$, with $T_\phi(f)=f\circ\phi$. Then
		\begin{equation*}
			\left(\dfrac{1}{1-|\phi(0)|^2}\right)^{1/p} \le ||T_\phi||_p \le \left(\dfrac{1+|\phi(0)|}{1-|\phi(0)|}\right)^{1/p}.
		\end{equation*}
	\end{lemma}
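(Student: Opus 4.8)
The estimate is classical and could simply be quoted from \cite{Cowen-Maccluer}, but I outline a self-contained argument. The plan is to establish the two bounds by entirely different mechanisms: the upper bound through Littlewood's subordination principle combined with an automorphism factorization of $\phi$, and the lower bound through a single normalized test function together with the contractivity of point evaluation at the origin.

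For the upper bound, I would set $a=\phi(0)$ and let $\alpha_a(z)=\frac{a-z}{1-\bar a z}$ be the disk automorphism interchanging $0$ and $a$. Since $\alpha_a$ is an involution, the map $\psi:=\alpha_a\circ\phi$ is a holomorphic self-map of $\D$ fixing the origin, and $\phi=\alpha_a\circ\psi$. Using the composition law $T_{\phi_1\circ\phi_2}=T_{\phi_2}T_{\phi_1}$, this factors the operator as $T_\phi=T_\psi T_{\alpha_a}$, whence $||T_\phi||_p\le ||T_\psi||_p\,||T_{\alpha_a}||_p$. First I would invoke Littlewood's subordination theorem, which gives $||T_\psi||_p\le 1$ precisely because $\psi(0)=0$. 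It then remains to bound the norm of the operator with the automorphism symbol. Passing to boundary values and performing the change of variables $e^{i\varphi}=\alpha_a(e^{i\theta})$, whose boundary Jacobian is $|\alpha_a'|$ on the circle, one obtains
\begin{equation*}
	||f\circ\alpha_a||_{H^p}^p=\frac{1}{2\pi}\int_0^{2\pi}|f(e^{i\varphi})|^p\,|\alpha_a'(e^{i\varphi})|\,d\varphi,
\end{equation*}
and since $|\alpha_a'(e^{i\varphi})|=\frac{1-|a|^2}{|1-\bar a e^{i\varphi}|^2}\le\frac{1+|a|}{1-|a|}$, this yields $||T_{\alpha_a}||_p\le\left(\frac{1+|a|}{1-|a|}\right)^{1/p}$. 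Combining the two estimates gives the claimed upper bound.

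For the lower bound, I would test $T_\phi$ on the normalized functions $k_a(z)=\frac{(1-|a|^2)^{1/p}}{(1-\bar a z)^{2/p}}$, which satisfy $||k_a||_{H^p}=1$ by the Poisson-kernel identity $\frac{1}{2\pi}\int_0^{2\pi}|1-\bar a e^{i\theta}|^{-2}\,d\theta=(1-|a|^2)^{-1}$. Since point evaluation at the origin is contractive on $H^p$ (because $|g(0)|=\left|\frac{1}{2\pi}\int_0^{2\pi} g(e^{i\theta})\,d\theta\right|\le ||g||_{H^p}$ by Jensen's inequality, which is exactly where $p\ge1$ enters), one gets $||T_\phi||_p\ge ||T_\phi k_a||_{H^p}\ge |(T_\phi k_a)(0)|=|k_a(\phi(0))|$. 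With $a=\phi(0)$ a direct computation gives $|k_a(\phi(0))|=(1-|a|^2)^{-1/p}$, which is exactly the asserted lower bound.

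The hard part will be the absence of Hilbert-space structure for $p\ne2$: every step must be run directly on the $H^p$ integral means rather than through an inner product. Concretely, the delicate point is justifying the boundary change of variables — one needs the radial boundary values of $f$ and of $f\circ\alpha_a$ to exist almost everywhere and the substitution to be measure-theoretically valid — and confirming that Littlewood's subordination theorem applies throughout the stated range, which indeed it does for all $0<p<\infty$.
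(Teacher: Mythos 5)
The paper gives no proof of this lemma; it is quoted directly from \cite[Corollary 3.7]{Cowen-Maccluer}, and your argument is precisely the standard one behind that result: Littlewood subordination plus the automorphism factorization $\phi=\alpha_a\circ\psi$ and the boundary change of variables $|\alpha_a'(e^{i\varphi})|=\frac{1-|a|^2}{|1-\bar a e^{i\varphi}|^2}\le\frac{1+|a|}{1-|a|}$ for the upper bound, and the normalized kernels $k_a$ together with contractivity of evaluation at $0$ for the lower bound. The argument is correct as written (the paper's unnormalized $H^p$ norm in \eqref{eq:hardy definition} versus your normalized one is immaterial, since operator norms are ratios), so nothing further is needed.
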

	Applying Lemma \ref{lm:hardy estimate} on each $\phi_t$, $t\ge0$, and editing the inequality with the help of \eqref{eq:hyperbolic distance 0}, we see that
	\begin{equation}\label{eq:hardy hyperbolic estimate}
		\log\dfrac{1}{4}+\frac{2}{p}d_{\D}(0,\phi_t(0)) \le \log||T_t||_p \le \frac{2}{p}d_{\D}(0,\phi_t(0)), \quad t\ge0.
	\end{equation}
	
	Through this last inequality, we are able to calculate exactly the growth bound of any strongly continuous semigroup of composition operators acting on a Hardy space $H^p$. By extension, we can find explicitly the spectral radius of each composition operator in the semigroup. For the sake of convenience, we set $\omega_p:=\omega_{H^p}$ and $r_p:=r_{H^p}$. 
	If, in addition, the semigroup is induced by a holomorphic semigroup of finite shift, we proceed to an even better description. The proof of the result below may be directly derived via a combination of relation \eqref{eq:hardy hyperbolic estimate}, Theorem \ref{thm:total speed} and Lemma \ref{lm:total speed finite shift}. Parts (a) and (b) of the corollary below are also found in \cite[Theorem 3.9]{Cowen-Maccluer}. We include them for the sake of completeness.
	
	\begin{corollary}\label{cor:type hardy}
		Let $(\phi_t)$ be a non-elliptic semigroup in $\D$ which induces the semigroup of composition operators $(T_t)$.
		\begin{enumerate}
			\item[\textup{(a)}] If $(\phi_t)$ is hyperbolic with spectral value $\mu>0$, then $\omega_p=\mu/p$, for all $p\ge1$, and $r_p(T_t)=e^{\mu t/p}$, for all $p\ge1$ and all $t\ge0$.
			\item[\textup{(b)}] If $(\phi_t)$ is parabolic, then $\omega_p=0$, for all $p\ge1$, and $r_p(T_t)=1$, for all $p\ge1$ and all $t\ge0$.
			\item[\textup{(c)}] If $(\phi_t)$ is of finite shift, then
			$$\lim\limits_{t\to+\infty}\dfrac{\log||T_t||_p}{\log t}=\dfrac{2}{p}, \quad \text{for all }p\ge1.$$
		\end{enumerate}
	\end{corollary}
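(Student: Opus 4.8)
The plan is to sandwich $\log\|T_t\|_p$ between the two bounds of \eqref{eq:hardy hyperbolic estimate} and then pass to the appropriate limit, letting the orbit asymptotics of $\{\phi_t(0)\}$ supplied by Theorem \ref{thm:total speed} and Lemma \ref{lm:total speed finite shift} carry the argument. Since \eqref{eq:hardy hyperbolic estimate} already pins down $\log\|T_t\|_p$ up to the harmless additive constant $\log(1/4)$ and the multiplicative factor $2/p$, the three parts differ only in the normalization—by $t$ or by $\log t$—used when dividing.

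For parts (a) and (b), I would divide \eqref{eq:hardy hyperbolic estimate} throughout by $t>0$ to obtain
\begin{equation*}
\frac{1}{t}\log\frac{1}{4}+\frac{2}{p}\cdot\frac{d_{\D}(0,\phi_t(0))}{t}\le\frac{\log\|T_t\|_p}{t}\le\frac{2}{p}\cdot\frac{d_{\D}(0,\phi_t(0))}{t}.
\end{equation*}
Letting $t\to+\infty$, the term $t^{-1}\log(1/4)$ vanishes, while Theorem \ref{thm:total speed} applied at $z=0$ gives $d_{\D}(0,\phi_t(0))/t\to\mu/2$. Both outer expressions therefore converge to $(2/p)(\mu/2)=\mu/p$, so by the squeeze theorem $\omega_p=\lim_{t\to+\infty}\log\|T_t\|_p/t=\mu/p$. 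This is precisely the claim of (a) when $\mu>0$ and of (b) when $\mu=0$, and in each case the spectral radius follows at once from its definition $r_p(T_t)=e^{\omega_p t}$, yielding $e^{\mu t/p}$ and $1$ respectively.

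For part (c), the semigroup is of finite shift, hence parabolic of positive hyperbolic step, so part (b) already forces $\omega_p=0$; the refined growth rate is captured by dividing \eqref{eq:hardy hyperbolic estimate} by $\log t$ rather than by $t$. The identical squeezing argument, now invoking Lemma \ref{lm:total speed finite shift} to get $d_{\D}(0,\phi_t(0))/\log t\to 1$ and observing $\log(1/4)/\log t\to 0$, drives both bounds to the common limit $2/p$, which is the assertion.

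The proof is essentially bookkeeping once \eqref{eq:hardy hyperbolic estimate} and the two speed results are in place, so there is no substantive obstacle; the only point meriting a moment of care is verifying that the additive constant $\log(1/4)$ is asymptotically negligible under \emph{both} normalizations, which is immediate since $t^{-1}$ and $(\log t)^{-1}$ each tend to $0$.
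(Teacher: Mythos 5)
Your proposal is correct and follows exactly the route the paper intends: the paper gives no written proof for this corollary, stating only that it is ``directly derived via a combination of relation \eqref{eq:hardy hyperbolic estimate}, Theorem \ref{thm:total speed} and Lemma \ref{lm:total speed finite shift},'' which is precisely your squeeze argument after dividing by $t$ (resp. $\log t$). Your closing observation that the additive constant $\log(1/4)$ vanishes under both normalizations is the only detail worth checking, and you handle it correctly.
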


	\section{\quad Bergman and Hardy Spaces}\label{sec:bergman and hardy}
	
	We now proceed to the second main part of this work where we are going to examine the point spectrum of the infinitesimal generator whenever the semigroup of composition operators acts on a Bergman space $A_\alpha^p$, $p\ge1$, $\alpha>-1$, or on a Hardy space $H^p$, $p\ge1$. First of all, we state and prove two auxiliary results about the monotonicity properties and the shape of the point spectrum in both families of spaces. Then, we deal with the Bergman spaces using Green's function and the Hardy-Stein Formula from Theorem \ref{thm:bergman hardy-stein}. Finally, since Theorem \ref{thm:bergman hardy-stein} is a more general version of the Littlewood-Paley Formula in Theorem \ref{thm:hardy hardy-stein}, the results for the Hardy spaces will be derived as corollaries of our results on the Bergman spaces.
	
	Note that in \cite{Betsakos_Bergman} Betsakos fully characterized the point spectrum of the infinitesimal generator in the case of the Bergman space whenever the semigroup of composition operators is induced by a hyperbolic semigroup. Likewise, in \cite{betsakos_eig}, Betsakos completely describes the respective spectrum in the Hardy spaces whenever the initial semigroup is hyperbolic. Due to these works, we only work with the point spectra induced by parabolic semigroups. The problem of explicitly computing the point spectrum in such a case seems extremely hard. We will provide certain inclusion relations while also finding some sufficient conditions for equality. Following our procedure in the Dirichlet space, the main geometric tool will be the angular sectors containing or contained in the Koenigs domain. However, due to the increased complexity in the nature of the Bergman spaces, we are going to also use techniques of potential theory.
	
	\subsection{Basic Properties}
	Let $(T_t)$ be a semigroup of composition operators induced by a non-elliptic semigroup $(\phi_t)$. For the sake of brevity, if the $(T_t)$ acts on the Hardy space $H^p$, $p\ge1$, we will denote by $\Gamma_p$ its infinitesimal generator and by $\sigma_p:=\sigma_p(\Gamma_p)$ the corresponding point spectrum. If the semigroup acts on the Bergman space $A_\alpha^p$, $p\ge1$, $\alpha>-1$, we use the simplified notations $\Gamma_{p,\alpha}$ and $\sigma_{p,\alpha}:=\sigma_{p,\alpha}(\Gamma_{p,\alpha})$. 
	
	Let $h$ denote the Koenigs function of $(\phi_t)$. We know by \cite[p. 9]{siskakis_review} (see also \cite{siskakis_thesis}) that
	\begin{equation}\label{eq:spectrum hardy}
		\sigma_p=\{\lambda\in\C:e^{\lambda h}\in H^p\}, \quad p\ge1.
	\end{equation}
	On the other hand, by \cite[Theorem 2]{Siskakis-Bergman}, it is true that
	\begin{equation}\label{eq:spectrum bergman}
		\sigma_{p,\alpha}=\{\lambda\in\C:e^{\lambda h}\in A_\alpha^p\}, \quad p\ge1, \alpha>-1.
	\end{equation}
	Since the constant functions belong to all the Hardy and Bergman spaces, the number $0$ is always an eigenvalue and belongs trivially to all point spectra.
	
	We start with an auxiliary lemma correlating the size of the point spectrum $\sigma_p$ or $\sigma_{p,a}$ with the size of the Koenigs domain of $(\phi_t)$. This is the counterpart of Lemma \ref{lem:dirichlet spectrum monotonicity}.
	
	\begin{lemma}\label{lem:bergman spectrum monotonicity}
		Let $(\phi_t)$, $(\widetilde{\phi_t})$ be two non-elliptic semigroups in $\D$ with Koenigs functions $h$, $\widetilde{h}$, and Koenigs domains $\Omega$, $\widetilde{\Omega}$, respectively. Assume that $\Gamma_p$, $\Gamma_{p,\alpha}$ and $\widetilde{\Gamma_p}$, $\widetilde{\Gamma_{p,\alpha}}$ are the infinitesimal generators of the respective induced semigroups of composition operators in the Hardy space $H^p$ and the Bergman space $A_\alpha^p$. Denote by $\sigma_p$, $\sigma_{p,\alpha}$, $\widetilde{\sigma_p}$, and $\widetilde{\sigma_{p,\alpha}}$ the corresponding point spectra. If $\Omega\subseteq\widetilde{\Omega}$, then $\sigma_p\supseteq\widetilde{\sigma_p}$ and $\sigma_{p,\alpha}\supseteq\widetilde{\sigma_{p,\alpha}}$.
	\end{lemma}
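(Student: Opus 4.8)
The plan is to mirror the proof of Lemma~\ref{lem:dirichlet spectrum monotonicity}, but to route the extra weight appearing in the Hardy--Stein formula through Green's function. First I would fix $\lambda\neq 0$ (the case $\lambda=0$ is trivial since constants lie in every space) and write down the integral criterion for membership in the point spectrum. By \eqref{eq:spectrum bergman}, $\lambda\in\widetilde{\sigma_{p,\alpha}}$ is equivalent to $e^{\lambda\widetilde h}\in A_\alpha^p$, and Theorem~\ref{thm:bergman hardy-stein} turns this into the finiteness of $\int_{\D}|e^{\lambda\widetilde h}|^{p-2}|(e^{\lambda\widetilde h})'|^2(\log\frac1{|z|})^{\alpha+2}\,dA$. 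A direct computation gives $|e^{\lambda\widetilde h}|^{p-2}|(e^{\lambda\widetilde h})'|^2=|\lambda|^2|\widetilde h'|^2 e^{p\RE(\lambda\widetilde h)}$, so after the change of variables $w=\widetilde h(z)$ (valid since $\widetilde h$ is univalent, with $dA(w)=|\widetilde h'(z)|^2\,dA(z)$ up to the normalising constant) the criterion becomes
\begin{equation*}
\int_{\widetilde\Omega} e^{p\RE(\lambda w)}\,\bigl(\log\tfrac1{|\widetilde h^{-1}(w)|}\bigr)^{\alpha+2}\,dA(w)<+\infty.
\end{equation*}
Here I would use the conformal invariance \eqref{eq:green invariance} together with \eqref{eq:green unit disk}: since $\widetilde h:\D\to\widetilde\Omega$ is conformal, $\log\frac1{|z|}=g_{\D}(0,z)=g_{\widetilde\Omega}(\widetilde h(0),\widetilde h(z))$, so the weight equals $g_{\widetilde\Omega}(\widetilde h(0),w)^{\alpha+2}$. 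The same manipulation applied to $\Omega$ shows that $\lambda\in\sigma_{p,\alpha}$ iff $\int_{\Omega}e^{p\RE(\lambda w)}g_{\Omega}(h(0),w)^{\alpha+2}\,dA(w)<+\infty$.

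Next, assuming $\lambda\in\widetilde{\sigma_{p,\alpha}}$, I would bound the $\Omega$-integral by the $\widetilde\Omega$-integral in two steps. Since $\Omega\subseteq\widetilde\Omega$ and $h(0)\in\Omega$, domain monotonicity \eqref{eq:green monotonicity} gives $g_{\Omega}(h(0),w)\le g_{\widetilde\Omega}(h(0),w)$ for $w\in\Omega$; combined with the positivity of the integrand and $\Omega\subseteq\widetilde\Omega$ this yields
\begin{equation*}
\int_{\Omega}e^{p\RE(\lambda w)}g_{\Omega}(h(0),w)^{\alpha+2}\,dA(w)\le \int_{\widetilde\Omega}e^{p\RE(\lambda w)}g_{\widetilde\Omega}(h(0),w)^{\alpha+2}\,dA(w).
\end{equation*}
The remaining difficulty is that the hypothesis controls the same integral but with the pole $\widetilde h(0)$ rather than $h(0)$. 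So everything reduces to the claim that, inside the \emph{fixed} domain $\widetilde\Omega$, replacing the pole $\widetilde h(0)$ by $h(0)$ preserves finiteness of the weighted integral. For this pole-swap I would invoke a Harnack-type comparison: there exist a compact set $K\subset\widetilde\Omega$ containing both $h(0)$ and $\widetilde h(0)$ and a constant $C\ge 1$ such that $g_{\widetilde\Omega}(h(0),w)\le C\,g_{\widetilde\Omega}(\widetilde h(0),w)$ for all $w\in\widetilde\Omega\setminus K$. Granting this, I split $\int_{\widetilde\Omega}=\int_K+\int_{\widetilde\Omega\setminus K}$; on $K$ the factor $e^{p\RE(\lambda w)}$ is bounded and $g_{\widetilde\Omega}(h(0),\cdot)^{\alpha+2}$ has only a logarithmic singularity at $h(0)$, which is locally integrable for every $\alpha>-1$, so the compact part is finite, while on $\widetilde\Omega\setminus K$ the comparison bounds the integrand by $C^{\alpha+2}$ times the integrand with pole $\widetilde h(0)$, whose integral is finite by hypothesis. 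This gives $\lambda\in\sigma_{p,\alpha}$ and hence $\sigma_{p,\alpha}\supseteq\widetilde{\sigma_{p,\alpha}}$. The Hardy statement $\sigma_p\supseteq\widetilde{\sigma_p}$ follows identically, reading $\alpha=-1$ and using Theorem~\ref{thm:hardy hardy-stein} and \eqref{eq:spectrum hardy} in place of their Bergman counterparts (the exponent $\alpha+2=1$ keeps the log-singularity integrable).

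The main obstacle is the pole-swap comparison, and in particular its uniformity out to the boundary and to infinity of the unbounded domain $\widetilde\Omega$. I would establish it by the maximum principle applied to the harmonic function $u=g_{\widetilde\Omega}(h(0),\cdot)-C\,g_{\widetilde\Omega}(\widetilde h(0),\cdot)$ on $\widetilde\Omega\setminus K$: choosing $K$ so that both poles lie in its interior and $C$ so that $u\le 0$ on the compact inner boundary $\partial K$, one has $u\le 0$ on $\partial K$ while $u\to 0$ for nearly every point of $\partial\widetilde\Omega$, because Green's functions vanish nearly everywhere on the boundary by property~(iii). Since $u$ is bounded above (each Green's function is bounded away from its pole, property~(i)), the extended maximum principle on the non-polar-boundary domain $\widetilde\Omega\setminus K$ forces $u\le 0$ throughout, which is exactly the claimed inequality; the delicate point to verify is the behaviour as $w\to\infty$ within $\widetilde\Omega$, where one uses that these Koenigs domains (contained in a slit plane or a half-plane) make $\infty$ a regular boundary point.
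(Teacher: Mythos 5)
Your proposal is correct and follows essentially the same route as the paper's proof: both convert the Hardy--Stein/Littlewood--Paley criterion via conformal invariance into a weighted integral against a power of the Green's function, perform the pole-swap $g_{\widetilde\Omega}(\widetilde h(0),\cdot)\to g_{\widetilde\Omega}(h(0),\cdot)$ by a maximum-principle comparison away from a compact exceptional set, invoke domain monotonicity of the Green's function, and dispose of the singular part by local integrability of the logarithmic singularity. The only (cosmetic) differences are that the paper's exceptional neighborhood surrounds $h(0)$ alone and the singular contribution is estimated after pulling back to the disk, whereas you keep both poles inside $K$ and integrate directly in the $w$-plane.
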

	\begin{proof}
		We will only prove the second inclusion with regard to the Bergman space. The first one follows almost identically. Fix $p\ge1$ and $\alpha>-1$. Certainly $0\in\sigma_{p,\alpha}\cap\widetilde{\sigma_{p,\alpha}}$. Let $\lambda\in\widetilde{\sigma_{p,\alpha}}\setminus\{0\}$. Due to \eqref{eq:spectrum bergman}, we deduce that $e^{\lambda\widetilde{h}}\in A_\alpha^p$ and hence by Theorem \ref{thm:bergman hardy-stein}
		\begin{equation}\label{eq:bergman monotonicity 1}
			\int\limits_{\D}\left|e^{\lambda \widetilde{h}(z)}\right|^{p-2}\left|\left(e^{\lambda\widetilde{h}}\right)'(z)\right|^{2}\left(\log\frac{1}{|z|}\right)^{\alpha+2}dA(z)=\int\limits_{\D}\left|e^{p\lambda\widetilde{h}(z)}\right||\lambda \widetilde{h}'(z)|^2\left(\log\frac{1}{|z|}\right)^{\alpha+2}dA(z)<+\infty.
		\end{equation} 
		Notice that $\log(1/|z|)=g_{\D}(0,z)=g_{\widetilde{\Omega}}(\widetilde{h}(0),\widetilde{h}(z))$ due to \eqref{eq:green unit disk} and the conformal invariance of the Green's function. In addition, the mapping $\widetilde{h}$ is univalent and we may proceed to the change of variables $w=\widetilde{h}(z)$ in \eqref{eq:bergman monotonicity 1}. Consequently,
		\begin{equation}\label{eq:bergman monotonicity 2}
			\int\limits_{\widetilde{\Omega}}\left|e^{p\lambda w}\right|(g_{\widetilde{\Omega}}(\widetilde{h}(0),w))^{\alpha+2}dA(w)<+\infty.
		\end{equation}
		By our hypothesis, $\Omega\subseteq\widetilde{\Omega}$ and hence $h(0)\in\widetilde{\Omega}$. Consider a neighborhood $A$ of $\widetilde{\Omega}$ around $h(0)$. Keeping in mind that $g_{\widetilde{\Omega}}(\widetilde{h}(0),\widetilde{h}(0))=g_{\widetilde{\Omega}}(h(0),h(0))=+\infty$ and that the Green's function vanishes when one of the arguments tends to the boundary, the maximum principle for harmonic functions allows us to shrink the neighborhood $A$ sufficiently so that there exists an absolute constant $c>0$ with
		\begin{equation}\label{eq:bergman monotonicity 3}
			g_{\widetilde{\Omega}}(h(0),w) \le c g_{\widetilde{\Omega}}(\widetilde{h}(0),w), \quad \textup{for all }w\in\widetilde{\Omega}\setminus A.
		\end{equation}
		If needed, we shrink $A$ even more so that $A\subset\Omega$ and $h^{-1}(A)\subseteq D(0,1/3)$. The positivity of the integrand in \eqref{eq:bergman monotonicity 2}, relation \eqref{eq:bergman monotonicity 3} and the domain monotonicity property of the Green's function lead to
		\begin{equation}\label{eq:bergman monotonicity 4}
			+\infty>\int\limits_{\Omega\setminus A}\left|e^{p\lambda w}\right|(g_{\widetilde{\Omega}}(\widetilde{h}(0),w))^{\alpha+2}dA(w)\ge \frac{1}{c^{\alpha+2}} \int\limits_{\Omega\setminus A}\left|e^{p\lambda w}\right|(g_{\Omega}(h(0),w))^{\alpha+2}dA(w).
		\end{equation}
		Via the change of variables $z=h^{-1}(w)$, \eqref{eq:bergman monotonicity 4} combined with \eqref{eq:green unit disk} implies that
		\begin{equation}\label{eq:bergman monotonicity 5}
			\int\limits_{\D\setminus h^{-1}(A)}\left|e^{\lambda h(z)}\right|^{p-2}\left|\left(e^{\lambda h}\right)'(z)\right|^{2}\left(\log\frac{1}{|z|}\right)^{\alpha+2}dA(z)<+\infty.
		\end{equation}
		Finally, inside the simply connected domain $h^{-1}(A)$, both functions $e^{\lambda h}$ and $(e^{\lambda h})'$ are obviously bounded. In addition, the function $\log(1/|z|)^{\alpha+2}$ is integrable in the disk $D(0,1/3)$ for all $\alpha\ge-1$. Therefore, we also have that
		\begin{equation}\label{eq:bergman monotonicity 6}
			\int\limits_{h^{-1}(A)}\left|e^{\lambda h(z)}\right|^{p-2}\left|\left(e^{\lambda h}\right)'(z)\right|^{2}\left(\log\frac{1}{|z|}\right)^{\alpha+2}dA(z)<+\infty.
		\end{equation}
		A combination of \eqref{eq:bergman monotonicity 5} with \eqref{eq:bergman monotonicity 6} along with an application of Theorem \ref{thm:bergman hardy-stein} yields $e^{\lambda h}\in A_\alpha^p$ which in view of \eqref{eq:spectrum bergman} provides the inclusion of $\lambda$ in $\sigma_{p,\alpha}$. As a result, $\widetilde{\sigma_{p,\alpha}}\subseteq\sigma_{p,\alpha}$. An identical procedure with $\alpha=-1$ and the utilization of Theorem \ref{thm:hardy hardy-stein} provide the required outcome for the Hardy spaces.
	\end{proof}
	
	\begin{remark}
		The monotonicity property of the point spectrum with respect to the Koenigs domains for the Hardy spaces appears in \cite[Proposition 1]{betsakos_eig} by means of the subordination principle. However, in his result, Betsakos assumes that $h(0)=\widetilde{h}(0)$. So, other than generalizing the result for the Bergman spaces, we also lift this extra assumption.
	\end{remark}
	
	In our second basic result, we study the shape of the point spectrum in conjunction to its connectedness. We will prove that the point spectrum $\sigma_{p,\alpha}$ is \textit{starlike with respect to} $0$ for any valid choice of $p$, $\alpha$. The respective result for the Hardy spaces, whose proof we follow, may be found in \cite[Lemma 4.3]{gonzalezdona}.
	
	\begin{lemma}\label{lm:spectrum starlike}
		Let $(T_t)$ be a semigroup of composition operators induced by a non-elliptic semigroup in $\D$ and acting on the Bergman space $A_\alpha^p$, $p\ge1$, $\alpha>-1$. Then the point spectrum of its infinitesimal generator is starlike with respect to $0$.
	\end{lemma}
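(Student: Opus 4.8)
The plan is to argue directly from the norm description of the Bergman space together with the characterization \eqref{eq:spectrum bergman} of the point spectrum. Recall that $\sigma_{p,\alpha}=\{\lambda\in\C:e^{\lambda h}\in A_\alpha^p\}$, where $h$ is the Koenigs function of the inducing semigroup. Saying that $\sigma_{p,\alpha}$ is starlike with respect to $0$ amounts to showing that for every $\lambda\in\sigma_{p,\alpha}$ the whole segment $[0,\lambda]$ lies in $\sigma_{p,\alpha}$; since $0$ is always an eigenvalue, it suffices to prove that $e^{\lambda h}\in A_\alpha^p$ and $s\in[0,1]$ together imply $e^{s\lambda h}\in A_\alpha^p$, that is, $s\lambda\in\sigma_{p,\alpha}$.

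The key step is a pointwise domination of the integrand. Using the definition \eqref{eq:bergman norm} of the Bergman norm, I would write
\[
\|e^{s\lambda h}\|_{A_\alpha^p}^p=\int_{\D}e^{ps\,\mathrm{Re}(\lambda h(z))}(1-|z|^2)^\alpha\,dA(z),
\]
and then invoke the elementary inequality $e^{su}\le 1+e^{u}$, valid for every real $u$ and every $s\in[0,1]$: for $u\ge 0$ one has $su\le u$, whence $e^{su}\le e^u\le 1+e^u$, while for $u<0$ one has $su\le 0$, whence $e^{su}\le 1\le 1+e^u$. Applying this with $u=p\,\mathrm{Re}(\lambda h(z))$ gives $e^{ps\,\mathrm{Re}(\lambda h(z))}\le 1+e^{p\,\mathrm{Re}(\lambda h(z))}$ at every point of $\D$, and integrating against $(1-|z|^2)^\alpha$ yields
\[
\|e^{s\lambda h}\|_{A_\alpha^p}^p\le \|1\|_{A_\alpha^p}^p+\|e^{\lambda h}\|_{A_\alpha^p}^p.
\]
The first term equals $\int_{\D}(1-|z|^2)^\alpha\,dA(z)$, which is finite for $\alpha>-1$, and the second is finite precisely because $\lambda\in\sigma_{p,\alpha}$. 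Hence $e^{s\lambda h}\in A_\alpha^p$ and $s\lambda\in\sigma_{p,\alpha}$, which is the desired conclusion.

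The point to be careful about—rather than a genuine obstacle—is the choice of integral representation. It is tempting to run the argument through the Hardy--Stein formula of Theorem \ref{thm:bergman hardy-stein}, exactly as in the proof of Lemma \ref{lem:bergman spectrum monotonicity}; however, the analogous ``constant'' term arising there is $\int_{\D}|h'(z)|^2(\log(1/|z|))^{\alpha+2}\,dA(z)$, which is finite only if $h\in A_\alpha^2$, and this typically fails since the Koenigs function of a non-elliptic semigroup is unbounded. Working with the plain norm \eqref{eq:bergman norm} sidesteps the issue entirely, the constant-function term $\|1\|_{A_\alpha^p}^p$ being trivially finite. Finally, the very same computation, with the supremum over the circles $|z|=r$ in place of the area integral, handles the Hardy space setting of \cite[Lemma 4.3]{gonzalezdona}, so both cases are treated uniformly.
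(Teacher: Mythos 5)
Your proof is correct and follows essentially the same route as the paper: the authors split $\D$ into the sets where $\mathrm{Re}(\lambda h)$ is positive and nonpositive and bound $|e^{t\lambda h}|^p$ by $|e^{\lambda h}|^p$ and by a constant respectively, which is exactly the content of your pointwise inequality $e^{su}\le 1+e^{u}$. The resulting bound $\|e^{s\lambda h}\|_{A_\alpha^p}^p\le\|1\|_{A_\alpha^p}^p+\|e^{\lambda h}\|_{A_\alpha^p}^p$ matches theirs, and your observation about avoiding the Hardy--Stein formula is consistent with the paper's choice to work directly with the norm \eqref{eq:bergman norm}.
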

	\begin{proof}
		Let $\sigma_{p,\alpha}$ denote the point spectrum. As we already mentioned, it surely contains $0$. Let $\lambda\in\sigma_{p,\alpha}\setminus\{0\}$. Fix $t\in(0,1)$. We will estimate the Bergman norm of the function $e^{t\lambda h}$ where $h$ is the Koenigs function of the holomorphic semigroup inducing $(T_t)$. Set 
		$$\D^+:=\{z\in\D:\mathrm{Re}(\lambda h(z))>0\} \quad\textup{ and }\quad \D^-:=\{z\in\D:\mathrm{Re}(\lambda h(z))\le 0\}.$$
		Clearly $\D^+\cap\D^-=\emptyset$ whereas $\D^+\cup\D^-=\D$. Then
		\begin{eqnarray*}
			||e^{t\lambda h}||_{A^p_\alpha}^p&=&\int\limits_{\D}\left|e^{tp\lambda h(z)}\right|(1-|z|^2)^{\alpha}dA(z)\\
			&\le&\int\limits_{\D^+}\left|e^{p\lambda h(z)}\right|(1-|z|^2)^\alpha dA(z)+2^\alpha\int\limits_{\D^-}(1-|z|)^\alpha dA(z)\\
			&\le&||e^{\lambda h}||_{A_\alpha^p}^p+\frac{2^{\alpha+1}\pi}{(\alpha+1)(\alpha+2)}.
		\end{eqnarray*}
		Since $\lambda\in\sigma_{p, \alpha}$, \eqref{eq:spectrum bergman} yields $e^{\lambda h}\in A_{\alpha}^p$. The inequalities above then show that $e^{t\lambda h}\in A_\alpha^p$ as well. Therefore, \eqref{eq:spectrum bergman} implies that $t\lambda\in\sigma_{p, \alpha}$ and we have the desired result.
	\end{proof}
	
	As a direct corollary of Lemma \ref{lm:spectrum starlike}, we deduce that every spectrum $\sigma_{p,\alpha}$ is also a connected set. Recall that by Example \ref{ex:not connected}, this behavior is not maintained in the Dirichlet space.  
	
	Before we proceed to the main results, we state and prove one easy lemma about the shape of the point spectrum. However, this lemma applies only to infinitesimal generators induced by non-elliptic semigroups with positive hyperbolic step (either hyperbolic or parabolic of positive hyperbolic step). For the Hardy spaces, this result was proved by Betsakos in \cite{betsakos_eig}. Also, for hyperbolic semigroups, the following result is trivially true due to \cite{Betsakos_Bergman}. So, we only prove it for the Bergman spaces and parabolic semigroups of positive hyperbolic step. Recall that for such semigroups, we have introduced the normalization that their Koenigs domains are contained in the upper half-plane.
	\begin{lemma}\label{lm:convexity in the upper direction}
		Let $(T_t)$ be a semigroup of composition operators induced by a parabolic semigroup of positive hyperbolic step in $\D$ and acting on the Bergman space $A_\alpha^p$, $p\ge1$, $\alpha>-1$. If $\lambda$ belongs to the point spectrum of its infinitesimal generator, then so does $\lambda+it$, for all $t\ge0$.
	\end{lemma}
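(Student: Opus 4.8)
The plan is to reduce everything to the membership characterization \eqref{eq:spectrum bergman}, namely $\sigma_{p,\alpha}=\{\lambda\in\C:e^{\lambda h}\in A_\alpha^p\}$, where $h$ is the Koenigs function of the inducing parabolic semigroup, and then exploit the normalization that the Koenigs domain $\Omega=h(\D)$ lies in the upper half-plane $S(0,\pi)$. Suppose $\lambda\in\sigma_{p,\alpha}$, so that $e^{\lambda h}\in A_\alpha^p$. To show $\lambda+it\in\sigma_{p,\alpha}$ for $t\ge0$, I would compare the Bergman norms of $e^{(\lambda+it)h}$ and $e^{\lambda h}$ directly, without even invoking the Hardy--Stein formula.

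The key pointwise computation is the following. Writing $w=h(z)$, a point of $\Omega$, and splitting real and imaginary parts gives $\mathrm{Re}\big((\lambda+it)w\big)=\mathrm{Re}(\lambda w)-t\,\mathrm{Im}(w)$. Since $\Omega\subseteq S(0,\pi)$, we have $\mathrm{Im}(h(z))=\mathrm{Im}(w)>0$ for every $z\in\D$, and therefore, for $t\ge0$,
\begin{equation*}
\left|e^{(\lambda+it)h(z)}\right|=\left|e^{\lambda h(z)}\right|\,e^{-t\,\mathrm{Im}(h(z))}\le\left|e^{\lambda h(z)}\right|,\qquad z\in\D.
\end{equation*}
This is exactly the place where the positive hyperbolic step normalization (upper half-plane) enters: it is the upward direction $+it$ that produces the favorable sign $-t\,\mathrm{Im}(w)\le 0$.

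Raising to the $p$-th power and integrating against the Bergman weight then yields, by the definition of the norm in \eqref{eq:bergman norm},
\begin{equation*}
\left\|e^{(\lambda+it)h}\right\|_{A_\alpha^p}^p=\int\limits_{\D}\left|e^{(\lambda+it)h(z)}\right|^p(1-|z|^2)^\alpha\,dA(z)\le\int\limits_{\D}\left|e^{\lambda h(z)}\right|^p(1-|z|^2)^\alpha\,dA(z)=\left\|e^{\lambda h}\right\|_{A_\alpha^p}^p<+\infty.
\end{equation*}
Hence $e^{(\lambda+it)h}\in A_\alpha^p$, and \eqref{eq:spectrum bergman} gives $\lambda+it\in\sigma_{p,\alpha}$, as desired. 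The same argument with the weight $(1-|z|^2)^{-1}$ replaced appropriately, or via Theorem \ref{thm:hardy hardy-stein}, covers the Hardy case, but the lemma as stated is for $A_\alpha^p$.

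There is essentially no genuine obstacle here: the only thing to be careful about is the sign of $\mathrm{Im}(h(z))$, which is guaranteed precisely by the standing normalization $\Omega\subseteq S(0,\pi)$ recalled just before the statement. The argument is a pointwise domination followed by monotonicity of the integral, so I would keep the write-up short and emphasize that the result is a direct geometric consequence of $\Omega$ being contained in the upper half-plane.
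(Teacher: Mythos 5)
Your proposal is correct and is essentially identical to the paper's own proof: both reduce to the characterization \eqref{eq:spectrum bergman} and use the pointwise bound $|e^{(\lambda+it)h(z)}|=|e^{\lambda h(z)}|e^{-t\,\mathrm{Im}h(z)}\le|e^{\lambda h(z)}|$, which follows from $\mathrm{Im}h(z)>0$ under the upper half-plane normalization, and then integrate against the Bergman weight.
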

	\begin{proof}
		Fix $p\ge1$, $\alpha>-1$ and let $\sigma_{p,\alpha}$ denote the point spectrum of the infinitesimal generator. By \eqref{eq:spectrum bergman}, if $\lambda\in\sigma_{p,\alpha}$, then $e^{\lambda h}\in A_{\alpha}^p$, where $h$ is the Koenigs function of the semigroup inducing $(T_t)$. Thus, by definition, $||e^{\lambda h}||_{A_\alpha^p}<+\infty$. However, due to the shape of the respective Koenigs domain, $\mathrm{Im}h(z)>0$, for all $z\in\D$. As a result,
		\begin{equation*}
			\left|e^{(\lambda+it)h(z)}\right|=\left|e^{\lambda h(z)}\right| \, e^{-t\, \mathrm{Im}h(z)}\le \left|e^{\lambda h(z)}\right|,
		\end{equation*}
		for all $z\in\D$ and all $t\ge0$. Therefore, $\|e^{(\lambda+it)h}\|_{A^p_\alpha}\le \|e^{\lambda h}\|_{A^p_\alpha}<+\infty$. Ergo $e^{(\lambda+it)h}\in A_{\alpha}^p$, which leads to $\lambda+it\in\sigma_{p,\alpha}$ by \eqref{eq:spectrum bergman}.
	\end{proof}
	
	\begin{remark}
		If the Koenigs domain was contained in any other upper half-plane, the result holds following an identical procedure. On the other hand, if the Koenigs domain was contained in a lower half-plane, we would have that $\lambda-it\in\sigma_{p,\alpha}$, for all $t\ge0$. Besides, the number $0$ is always an eigenvalue. As a result, a direct corollary of the previous lemma is that the half-line $\{it:t\ge0\}$ (or $\{it:t\le0\}$ in the case of the lower half-plane) lies always completely in the point spectrum when the initial semigroup is parabolic of positive hyperbolic step.
	\end{remark}

	\subsection{Bergman Spaces}
	We move on to our main results concerning semigroups of composition operators in the Bergman spaces. Recall that for any semigroup of composition operators induced by a parabolic semigroup and any Bergman space, Corollary \ref{cor:type bergman} dictates that the type is equal to $0$. Therefore, the full spectrum of the infinitesimal generator of such a semigroup is contained in the closed left half-plane. By extension, 
	\begin{equation}\label{eq:spectrum bergman inclusion}
		\sigma_{p,\alpha}\subseteq i\overline{S(0,\pi)}, \quad \textup{for all }p\ge1 \textup{ and all }\alpha>-1.
	\end{equation} 
	
	We commence with the case where the Koenigs domain of the holomorphic semigroup is exactly an angular sector. We uphold the normalization of the previous sections and thus, consider all our Koenigs domains to be contained in the slit plane $\C\setminus(-\infty,0]$.
	
	\begin{lemma}\label{lm:bergman sector}
		Let $(\phi_t)$ be a semigroup in $\D$ with Koenigs domain $S(a,b)$, $-\pi\le a<b \le \pi$. Suppose that $(\phi_t)$ induces the semigroup of composition operators $(T_t)$ with infinitesimal generator $\Gamma_{p,\alpha}$.
		\begin{enumerate}
			\item[\textup{(a)}] If $b-a\in (0,\pi)$, then $\sigma_{p,\alpha}=i\overline{S(-a,\pi-b)}$.
			\item[\textup{(b)}] If $b-a=\pi$, then $\sigma_{p,\alpha}=\{ie^{-ia}t:t\ge0\}$.
			\item[\textup{(c)}] If $b-a \in(\pi,2\pi]$, then $\sigma_{p,\alpha}=\{0\}$.
		\end{enumerate}
	\end{lemma}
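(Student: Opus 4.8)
The plan is to combine the spectral description \eqref{eq:spectrum bergman} with the Hardy--Stein Formula (Theorem \ref{thm:bergman hardy-stein}) and the conformal invariance of Green's function, exactly as in the proof of Lemma \ref{lem:bergman spectrum monotonicity}. Writing $h$ for the Koenigs function and $w_0:=h(0)\in S(a,b)$, the identity $(e^{\lambda h})'=\lambda h'e^{\lambda h}$ together with $\log(1/|z|)=g_{\D}(0,z)=g_{S(a,b)}(w_0,h(z))$ and the change of variables $w=h(z)$ turns the Hardy--Stein integral into $|\lambda|^2\int_{S(a,b)}e^{p\RE(\lambda w)}\bigl(g_{S(a,b)}(w_0,w)\bigr)^{\alpha+2}\,dA(w)$. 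Thus $\lambda\in\sigma_{p,\alpha}\setminus\{0\}$ if and only if this integral converges. First I would observe that the local behaviour of the integrand never obstructs convergence: near the pole $w_0$ one has $g_{S(a,b)}(w_0,w)=\log\frac{1}{|w-w_0|}+O(1)$, and $(\log\frac1r)^{\alpha+2}$ is integrable against $r\,dr$ for every real exponent; near the finite boundary and the vertex $0$ the factor $g^{\alpha+2}\to 0$ since $\alpha+2>0$, while $e^{p\RE(\lambda w)}$ stays bounded. Hence finiteness is decided entirely by the behaviour as $w\to\infty$ inside the sector.

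Writing $w=re^{i\theta}$ and $\lambda=|\lambda|e^{i\psi}$, we have $\RE(\lambda w)=|\lambda|r\cos(\theta+\psi)$. If $\cos(\theta+\psi)<0$ for every $\theta\in[a,b]$ --- equivalently $\psi\in(\pi/2-a,3\pi/2-b)$, i.e. $\lambda$ in the open sector $iS(-a,\pi-b)$ --- then by compactness $\cos(\theta+\psi)\le -c_0<0$ and the exponential decay $e^{-pc_0|\lambda|r}$ dominates the at most polynomial growth of $g^{\alpha+2}$, so the integral converges and $\lambda\in\sigma_{p,\alpha}$. Conversely, if $\psi\notin[\pi/2-a,3\pi/2-b]$, there is an interior ray $\theta_*\in(a,b)$ with $\cos(\theta_*+\psi)>0$; on a thin truncated cone about $\theta_*$ whose closure avoids $\partial S(a,b)$, Green's function stays positive (indeed $g\gtrsim r^{-\pi/\beta}$, where $\beta:=b-a$), so $e^{p|\lambda|r\cos(\theta+\psi)}$ forces divergence and $\lambda\notin\sigma_{p,\alpha}$.

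The crux is the two boundary rays $\psi=\pi/2-a$ and $\psi=3\pi/2-b$, present precisely when $\beta\le\pi$; by the reflection symmetry of the picture it suffices to treat $\psi=\pi/2-a$, along which $\RE(\lambda w)=-|\lambda|r\sin(\theta-a)$ vanishes on the edge $\theta=a$ and is negative inside. To prove convergence I would use the explicit Green's function of the sector from Example \ref{ex:green sector}: mapping $S(a,b)$ conformally onto the half-plane $S(0,\pi)$ via $\zeta=(e^{-ia}w)^{\pi/\beta}$ gives, with $\zeta_0$ the image of $w_0$, the uniform bound $g_{S(a,b)}(w_0,w)=g_{S(0,\pi)}(\zeta_0,\zeta)\le\frac{2\IM\zeta_0}{|\zeta-\zeta_0|}\le C|w|^{-\pi/\beta}$ for large $|w|$. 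Setting $\gamma:=\pi(\alpha+2)/\beta$ and $\phi:=\theta-a$, the tail of the integral is controlled by $\int_0^{\beta}\int_R^{\infty}e^{-p|\lambda|r\sin\phi}\,r^{1-\gamma}\,dr\,d\phi$, whose only delicate region is $\phi\to 0^+$. There the substitution $s=p|\lambda|r\sin\phi$ produces an incomplete Gamma integral and reduces convergence to $\int_0^{\delta}\phi^{\gamma-2}J(\phi)\,d\phi<\infty$, where $J(\phi)=\int_{c\phi}^{\infty}e^{-s}s^{1-\gamma}\,ds$; a short case analysis ($\gamma<2$, $\gamma=2$, $\gamma>2$) shows this always holds as soon as $\gamma>1$. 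Since $\beta\le\pi$ and $\alpha+2>1$ we indeed have $\gamma=\pi(\alpha+2)/\beta>1$, so both boundary rays lie in $\sigma_{p,\alpha}$. This is the step I expect to be the main obstacle, as it is where the weight exponent $\alpha+2$ and the opening $\beta$ interact and where the closed-versus-open distinction is decided.

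Finally I would assemble the three cases, using that $0\in\sigma_{p,\alpha}$ automatically because constants lie in $A_\alpha^p$. For $\beta\in(0,\pi)$ the interior, exterior and boundary analyses combine to give $\sigma_{p,\alpha}=i\overline{S(-a,\pi-b)}$, proving (a). For $\beta=\pi$ the open interior sector degenerates and only the single boundary ray $\psi=\pi/2-a$ survives, which the boundary computation (now with $\gamma=\alpha+2>1$) places in the spectrum, giving $\sigma_{p,\alpha}=\{ie^{-ia}t:t\ge 0\}$ and proving (b). For $\beta\in(\pi,2\pi]$ the arc $\{\theta:\cos(\theta+\psi)>0\}$ has length $\pi<\beta$, so it meets $(a,b)$ for every $\psi$; hence every $\lambda\ne 0$ yields an interior growth ray and divergence, leaving $\sigma_{p,\alpha}=\{0\}$ and proving (c).
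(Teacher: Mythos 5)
Your proposal is correct and follows the same route as the paper: the spectral description \eqref{eq:spectrum bergman}, the Hardy--Stein formula, the change of variables $w=h(z)$ turning $\log(1/|z|)$ into $g_{S(a,b)}(h(0),w)$, and the same trichotomy (uniform exponential decay for $\arg\lambda$ in the open dual sector, an interior growth ray for $\arg\lambda$ outside its closure, and a separate analysis of the two critical rays). The only real divergence is at the critical rays $\arg\lambda=\pi/2-a$ and $3\pi/2-b$: the paper keeps the two-sided estimate $g_{S(a,b)}(h(0),re^{i\theta})\asymp r^{-\pi/(b-a)}\cos\psi_\theta$ with $\psi_\theta=\tfrac{\pi}{b-a}\bigl(\theta-\tfrac{a+b}{2}\bigr)$, integrates out $r$ first, and checks that $(\cos\psi_\theta)^{\alpha+2}/\cos(\arg\lambda+\theta)\sim(\theta-a)^{\alpha+1}$ stays bounded at the endpoint; you discard the vanishing angular factor, keep only $g\le C r^{-\pi/(b-a)}$, and compensate with the incomplete-Gamma computation, which needs $\gamma=\pi(\alpha+2)/(b-a)>1$. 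Both are valid for $\alpha>-1$ and $b-a\le\pi$, so your proof of the stated lemma is complete; but the cruder majorant buys slightly less, since at $\gamma=1$ (i.e. $\alpha=-1$ and $b-a=\pi$) it diverges while the paper's argument still closes, and the paper relies on exactly that extension to $\alpha=-1$ to deduce the Hardy-space analogue (Lemma \ref{lm:hardy sector}). If you want your argument to carry over to the Hardy case in the same way, you would need to reinstate the factor $\cos\psi_\theta$ in the upper bound for the Green's function.
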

	\begin{proof}
		For $S(a,b)$ to be a Koenigs domain, it needs to be simply connected and convex in the positive direction. Therefore, it is necessary that $a\le 0\le b$ and $a<b$. Clearly, if one of the two is equal to $0$, then the angular sector is contained in a horizontal half-plane and $(\phi_t)$ is of positive hyperbolic step. In case $a<0<b$, $(\phi_t)$ is of zero parabolic step. To start the proof, we execute some calculations that adhere to all the possible values of $b-a$. The distinction will happen towards the end of the proof. Fix $p\ge1$ and $\alpha>-1$. In any case, $0\in\sigma_{p,\alpha}$. Now let $\lambda\in\C\setminus\{0\}$. In order to examine the inclusion of $\lambda$ in the spectrum, we need to examine the inclusion of $e^{\lambda h}$ in the Bergman space $A_\alpha^p$, where $h$ is the Koenigs function of the semigroup. By Theorem \ref{thm:bergman hardy-stein}, $e^{\lambda h}\in A_\alpha^p$ if and only if
		\begin{equation}\label{eq:bergman main 1}
			\int\limits_{\D}\left|e^{\lambda h(z)}\right|^{p-2}\left|\left(e^{\lambda h}\right)'(z)\right|^2\left(\log\frac{1}{|z|}\right)^{\alpha+2}dA(z)=\int\limits_{\D}\left|e^{\lambda h(z)}\right|^p|\lambda h'(z)|^2\left(\log\frac{1}{|z|}\right)^{\alpha+2}dA(z)<+\infty.
		\end{equation}
		Using the change of variables $w=h(z)$ and \eqref{eq:green unit disk} along with the conformal invariance of the Green's function, we understand that $\lambda\in\sigma_{p,\alpha}$ if and only if 
		\begin{equation}\label{eq:bergman main 2}
			\int\limits_{S(a,b)}\left|e^{p\lambda w}\right|(g_{\D}(0,h^{-1}(w)))^{\alpha+2}dA(w)=\int\limits_{S(a,b)}\left|e^{p\lambda w}\right|(g_{S(a,b)}(h(0),w))^{\alpha+2}dA(w)<+\infty.
		\end{equation}
		As in the previous lemmas, the singularity for $w=h(0)$ due to the Green's function is integrable and we only need to estimate the integral ``close'' to the boundary of the sector. By conformal invariance, it is easy to see that 
		\begin{equation*}
			g_{S(a,b)}(h(0),w)=g_{S((a-b)/2,(b-a)/2)}(e^{-i(a+b)/2}h(0),e^{-i(a+b)/2}w), 
		\end{equation*}
		for any $w\in S(a,b)$. Through elementary computations, the Koenigs function $h$ can be chosen so that $e^{-i(a+b)/2}h(0)=1$. Then, in view of the symmetry of the Green's function and of Example \ref{ex:green sector}, setting $w=re^{i\theta}$, we see that
		\begin{eqnarray}\label{eq:bergman main 3}
			\notag	g_{S((a-b)/2,(b-a)/2)}(e^{-i(a+b)/2}h(0),e^{-i(a+b)/2}w)&=&\log\left|\dfrac{(e^{-i(a+b)/2}w)^{\pi/(b-a)}+1}{(e^{-i(a+b)/2}w)^{\pi/(b-a)}-1}\right| \\
			&=&\frac{1}{2}\log\dfrac{r^{2\pi/(b-a)}+2r^{\pi/(b-a)}\cos\psi_\theta+1}{r^{2\pi/(b-a)}-2r^{\pi/(b-a)}\cos\psi_\theta+1},
		\end{eqnarray}
		where $\psi_\theta:=\frac{\pi}{b-a}(\theta-\frac{a+b}{2})$. It is straightforward that $\cos\psi_\theta>0$ since $\theta\in(a,b)$. Applying the inequality $1-\frac{1}{x}\le \log x \le x-1$ which is valid for all $x>0$, relation \eqref{eq:bergman main 3} yields
		\begin{equation*}
			\dfrac{2r^{\frac{\pi}{b-a}}\cos\psi_\theta}{\left(r^{\frac{\pi}{b-a}}+1\right)^2} \le g_{S(a,b)}(h(0),w) \le \dfrac{2r^{\frac{\pi}{b-a}}\cos\psi_\theta}{\left(r^{\frac{\pi}{b-a}}-1\right)^2}.
		\end{equation*}
		A little reformulation on the latter inequality implies that
		\begin{equation}\label{eq:bergman main 4}
			\frac{1}{2}r^{-\frac{\pi}{b-a}}\cos\psi_\theta \le g_{S(a,b)}(h(0),w) \le 8 r^{-\frac{\pi}{b-a}}\cos\psi_\theta,
		\end{equation}
		for all $\theta\in(a,b)$ and all $r\in[2^{(b-a)/\pi},+\infty)$. Set $r_0=2^{(b-a)/\pi}>1$. Returning to \eqref{eq:bergman main 2} and using polar coordinates, we get that $\lambda\in\sigma_{p,\alpha}$ if and only if 
		\begin{equation}\label{eq:bergman main 5}
			\int\limits_{a}^{b}\int\limits_{0}^{+\infty}re^{pr|\lambda|\cos(\arg\lambda+\theta)}(g_{S(a,b)}(h(0),re^{i\theta}))^{\alpha+2}drd\theta<+\infty.
		\end{equation}
		Clearly, the finiteness of the integral is independent of $|\lambda|$ and only depends on $\arg\lambda$. In addition, the part of the integral for $r\in(0,r_0)$ is finite. So the finiteness hinges on the behavior for large $r$. However, from \eqref{eq:bergman main 4}, the function $g_{S(a,b)}(h(0),re^{i\theta})$ is comparable to $r^{-\pi/(b-a)}\cos\psi_\theta$, for $r\in[r_0,+\infty)$. Thus, returning to \eqref{eq:bergman main 5}, we deduce that $\lambda\in\sigma_{p,\alpha}$ if and only if
		\begin{equation}\label{eq:bergman main 6}
			\int\limits_{a}^{b}\int\limits_{r_0}^{+\infty}re^{pr|\lambda|\cos(\arg\lambda+\theta)}\dfrac{(\cos\psi_\theta)^{\alpha+2}}{r^{\frac{\pi(\alpha+2)}{b-a}}}drd\theta=\int\limits_{a}^{b}(\cos\psi_\theta)^{\alpha+2}\int\limits_{r_0}^{+\infty}r^{1-\frac{\pi(\alpha+2)}{b-a}}e^{pr|\lambda|\cos(\arg\lambda+\theta)}drd\theta<+\infty.
		\end{equation}
		Similarly to the Dirichlet space, the latter integral is finite if and only if $\cos(\arg\lambda+\theta)<0$ for all $\theta\in(a,b)$. This leads to $\arg\lambda\in[\pi/2-a,3\pi/2-b]$. Having this as a stepping stone, we start distinguishing cases:
		
		(a) Suppose that $b-a\in(0,\pi)$. Assume that $\arg \lambda \in[\pi/2-a,3\pi/2-b]$ (note that this interval has non-empty interior). The exponent of $r$ in the latter integral of \eqref{eq:bergman main 6} is negative. As a result, and since $r_0>1$,
		\begin{eqnarray*}
			\int\limits_{a}^{b}(\cos\psi_\theta)^{\alpha+2}\int\limits_{r_0}^{+\infty}r^{1-\frac{\pi(\alpha+2)}{b-a}}e^{pr|\lambda|\cos(\arg\lambda+\theta)}drd\theta &\le& \int\limits_{a}^{b}(\cos\psi_\theta)^{\alpha+2}\int\limits_{r_0}^{+\infty}e^{pr|\lambda|\cos(\arg\lambda+\theta)}drd\theta\\
			&=&-\frac{1}{r_0|\lambda|}\int\limits_{a}^{b}(\cos\phi_\theta)^{\alpha+2}\frac{e^{pr_0|\lambda|\cos(\arg\lambda+\theta)}}{\cos(\arg\lambda+\theta)}d\theta.
		\end{eqnarray*}
		Taking limits as $\theta\to a$ and as $\theta\to b$, it can be verified that the integrand is bounded for all choices of $\arg\lambda\in[\pi/2-a,3\pi/2-b]$ and for any modulus $|\lambda|$. Therefore, by \eqref{eq:spectrum bergman inclusion}, we get that $\sigma_{p,\alpha}=\overline{\{\lambda\in\C:\pi/2-a <\arg\lambda <3\pi/2-b\}}$.
		
		(b) Suppose that $b-a=\pi$. Working exactly as in the previous result we reach the same conclusion. However, this time $\pi/2-a=3\pi/2-b$ and the point spectrum transforms into a half-line.
		
		(c) Suppose that $b-a\in(\pi,2\pi]$. Then, for any choice of $\lambda\in\C\setminus\{0\}$ we may find a sufficiently small interval $(\theta_1,\theta_2)\subset(a,b)$ such that $\cos(\arg\lambda+\theta)>0$, for all $\theta\in(\theta_1,\theta_2)$. Hence the integral in \eqref{eq:bergman main 6} cannot be finite. Ergo $\sigma_{p,\alpha}=\{0\}$.
	\end{proof}
	
	\begin{remark}
		Evidently, if the Koenigs domain is of the form $q+S(a,b)$, for some $q\ne0$, the same result holds. This is because the vertex $q$ will just create an additive constant in the Koenigs function of the semigroup and by extension a multiplicative constant for $e^{\lambda h}$. But clearly such a multiplicative constant does not affect the finiteness of the Bergman norms.
	\end{remark}
	
	Using the latter lemma we can prove a general result about the point spectrum of infinitesimal generators in Bergman spaces. Note that our result does not solve the problem of characterizing the point spectrum, other than in certain cases. However, the inclusions below combined with the introductory lemmas of the previous subsection paint a clearer picture towards this objective. 
	
	We do not treat parabolic semigroups in a unified way, because Lemma \ref{lm:convexity in the upper direction} allows a slightly better description in the case of positive hyperbolic step. 
	For such semigroups the Koenigs domain is always contained in the classic upper half-plane and hence only the general notions of inner and outer arguments are needed. Once again, if the Koenigs domain is contained in any other upper or lower half-plane, the same inclusions hold albeit with the obvious modifications.

	\begin{theorem}\label{thm:bergman phs}
		Let $(\phi_t)$ be a parabolic semigroup of positive hyperbolic step in $\D$ which induces the semigroup of composition operators $(T_t)$. Suppose that $\Omega$ is the Koenigs domain of $(\phi_t)$ and that $\sigma_{p,\alpha}$ is the point spectrum of the infinitesimal generator of $(T_t)$ when acting on the Bergman space $A_\alpha^p$, $p\ge1$, $\alpha>-1$. Then:
		\begin{enumerate}
			\item[\textup{(a)}] If $\theta_\Omega=\Theta_\Omega=0$, then $iS[0,\pi)\cup\{0\}\subseteq\sigma_{p,\alpha}\subseteq i\overline{S(0,\pi)}$.
			\item[\textup{(b)}] If $\theta_\Omega=\Theta_\Omega\in(0,\pi)$, then $iS[0,\pi-\theta_\Omega)\cup\{0\}\subseteq \sigma_{p,\alpha} \subseteq i\overline{S(0,\pi-\theta_\Omega)}$.
			\item[\textup{(c)}] If $\theta_\Omega=\Theta_\Omega=\pi$, then $\sigma_{p,\alpha}=\{it:t\ge0\}$.
			\item[\textup{(d)}] If $\theta_\Omega<\Theta_\Omega$, then $iS[0,\pi-\Theta_\Omega)\cup\{0\}\subseteq\sigma_{p,\alpha}\subseteq i\overline{S(0,\pi-\theta_\Omega)}$. In case $\Theta_\Omega=\pi$, the set on the left becomes the vertical half-line $\{it:t\ge0\}$, while if $\theta_\Omega=0$, the set on the right becomes $i\overline{S(0,\pi)}$.
		\end{enumerate}
	\end{theorem}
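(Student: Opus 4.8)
The plan is to combine the exact computation of the point spectrum when $\Omega$ is an angular sector (Lemma \ref{lm:bergman sector}) with the monotonicity of the point spectrum under inclusion of Koenigs domains (Lemma \ref{lem:bergman spectrum monotonicity}), mirroring the proof of Theorem \ref{thm:intro phs dirichlet} but now feeding in the \emph{closed} sectors produced by Lemma \ref{lm:bergman sector}(a) in place of the open ones from the Dirichlet setting. First I would record two global facts. Since $0$ is always an eigenvalue and Lemma \ref{lm:convexity in the upper direction} grants that $\lambda\in\sigma_{p,\alpha}$ forces $\lambda+it\in\sigma_{p,\alpha}$ for all $t\ge0$, the vertical half-line $\{it:t\ge0\}$ lies in $\sigma_{p,\alpha}$ in every case; at the same time the growth-bound inclusion \eqref{eq:spectrum bergman inclusion} gives the crude bound $\sigma_{p,\alpha}\subseteq i\overline{S(0,\pi)}$.

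Next comes the upper bound via the inner argument. For each $\theta\in(0,\theta_\Omega)$ the definition \eqref{eq:inner argument} supplies $q_\theta\in\C$ with $q_\theta+S(0,\theta)\subseteq\Omega$; monotonicity (Lemma \ref{lem:bergman spectrum monotonicity}) together with Lemma \ref{lm:bergman sector}(a) and the remark following it yields $\sigma_{p,\alpha}\subseteq i\overline{S(0,\pi-\theta)}$. Intersecting these closed sectors as $\theta\to\theta_\Omega^-$ produces $\sigma_{p,\alpha}\subseteq i\overline{S(0,\pi-\theta_\Omega)}$; when $\theta_\Omega=0$ this step is vacuous and one keeps the bound $i\overline{S(0,\pi)}$. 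Dually, for the lower bound via the outer argument, for each $\theta\in(\Theta_\Omega,\pi)$ the definition \eqref{eq:outer argument} supplies $q_\theta\in\C$ with $\Omega\subseteq q_\theta+S(0,\theta)$, and the same two lemmas now give $i\overline{S(0,\pi-\theta)}\subseteq\sigma_{p,\alpha}$. Taking the union as $\theta\to\Theta_\Omega^+$ yields $iS[0,\pi-\Theta_\Omega)\cup\{0\}\subseteq\sigma_{p,\alpha}$, the closedness at the imaginary axis being exactly the half-line secured above; when $\Theta_\Omega=\pi$ the union is empty and the lower bound reduces to $\{it:t\ge0\}$.

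Finally I would read off the four cases. Parts (a) and (b) follow since $\theta_\Omega=\Theta_\Omega$ makes the two bounds coincide up to the bounding rays, giving $iS[0,\pi-\theta_\Omega)\cup\{0\}\subseteq\sigma_{p,\alpha}\subseteq i\overline{S(0,\pi-\theta_\Omega)}$ (at $\theta_\Omega=0$ the right-hand side is read as $i\overline{S(0,\pi)}$); part (d) is the identical computation with $\theta_\Omega<\Theta_\Omega$, together with the two degenerate endpoints noted above. Part (c) is the one case where the sector bounds pinch to a single ray: the inner-argument intersection collapses to $\{it:t\ge0\}$ as $\theta\to\pi^-$, while the half-line from Lemma \ref{lm:convexity in the upper direction} supplies the matching containment, forcing equality. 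The Hardy-space statement follows verbatim, running every invocation of Theorem \ref{thm:bergman hardy-stein} through its Littlewood--Paley analogue Theorem \ref{thm:hardy hardy-stein}, i.e.\ treating $H^p$ as the extremal case $\alpha=-1$. I expect the only delicate bookkeeping to be the open/closed/half-open status of the boundary rays under these unions and intersections---in particular securing the closed ray on the imaginary axis in the lower bound (which is precisely the role of Lemma \ref{lm:convexity in the upper direction}) and handling the vacuous limiting cases $\theta_\Omega=0$ and $\Theta_\Omega=\pi$ where Lemma \ref{lm:bergman sector} does not directly apply.
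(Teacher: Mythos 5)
Your proposal is correct and follows essentially the same route as the paper's proof: exact computation for sectors (Lemma \ref{lm:bergman sector} and its remark), domain monotonicity of the point spectrum (Lemma \ref{lem:bergman spectrum monotonicity}), the vertical half-line from Lemma \ref{lm:convexity in the upper direction}, and the growth-bound inclusion \eqref{eq:spectrum bergman inclusion}, with only the cosmetic difference that you intersect/union over a continuous parameter $\theta$ where the paper uses approximating sequences $a_n,b_n$. Your bookkeeping of the open/closed boundary rays and of the degenerate cases $\theta_\Omega=0$, $\Theta_\Omega=\pi$ matches the paper's conclusions.
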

	\begin{proof}
		Fix $p\ge1$ and $\alpha>-1$. We will treat each case with the help of Lemmas \ref{lm:convexity in the upper direction} and \ref{lm:bergman sector} depending on the shape of $\Omega$. Recall that $\omega_{p,\alpha}=0$ and hence $\sigma_{p,\alpha}\subseteq i\overline{S(0,\pi)}$ in general.
		
		(a) Since $\Theta_\Omega=0$, there exists a decreasing sequence $\{a_n\}\subset(0,\pi]$ satisfying $\lim_{n\to+\infty}a_n=0$ such that for each $n\in\mathbb{N}$ there exists $p_n\in\C$ with $\Omega\subset p_n+S(0,a_n)$, for all $n\in\mathbb{N}$. Combining Lemma \ref{lem:bergman spectrum monotonicity} and Lemma \ref{lm:bergman sector}, we obtain $\sigma_{p,\alpha}\supset i\overline{S(0,\pi-a_n)}$ for all $n\in\mathbb{N}$. Since the latter sequence of sets is increasing, taking its union and keeping in mind the asymptotic behavior of $\{a_n\}$, we obtain $\sigma_{p,\alpha}\supseteq iS(0,\pi)$. But $0\in \sigma_{p,\alpha}$ which by Lemma \ref{lm:convexity in the upper direction} yields $\{it:t\ge0\}\subset\sigma_{p,\alpha}$. Consequently, $\sigma_{p,\alpha}\subseteq i S[0,\pi)$. Due to the general inclusion $\sigma_{p,\alpha}\subseteq i\overline{S(0,\pi)}$, the desired outcome follows.
		
		(b) By our hypothesis, there exist an increasing sequence $\{a_n\}\subset(0,\pi]$, a decreasing sequence $\{b_n\}\subset(0,\pi]$ with $\lim_{n\to+\infty}a_n=\lim_{n\to+\infty}b_n=\theta_\Omega$ and two sequences of points $\{p_n\}, \{q_n\}\subset\C$ such that $p_n+S(0,a_n) \subset \Omega \subset q_n+S(0,b_n)$, for all $n\in\mathbb{N}$. By Lemma \ref{lm:bergman sector} and its subsequent remark, we understand that $i\overline{S(0,\pi-b_n)}\subset\sigma_{p,\alpha}\subset i\overline{S(0,\pi-a_n)}$, for all $n\in\mathbb{N}$. Thinking exactly as in the previous case, the convexity of the point spectrum in the upper direction implies that $\sigma_{p,\alpha}\subseteq iS[0,\pi-\theta_\Omega)$. On the other hand, the sets $\overline{S(0,\pi-a_n)}$ form a nested sequence and thus taking the intersection, $\sigma_{p,\alpha}\subseteq i\overline{S(0,\pi-\theta_\Omega)}$.
		
		(c) Since $\theta_\Omega=\pi$, there exists an increasing sequence $\{a_n\}\subset(0,\pi]$ converging to $\pi$ and a sequence of points $\{p_n\}\subset\C$ such that $\Omega\supset p_n+iS(0,a_n)$, for all $n\in\mathbb{N}$. By Lemma \ref{lem:bergman spectrum monotonicity}, we get $\sigma_{p,\alpha}\subset i\overline{S(0,\pi-a_n)}$, for all $n\in\mathbb{N}$. Considering the intersection, we deduce that $\{it:t\ge0\}=\sigma_{p,\alpha}$.
		
		(d) A combination of techniques used in all the previous cases leads to the desired inclusion.
	\end{proof}
	
	Theorem \ref{thm:bergman phs} does not provide a characterization of the point spectrum, but provides a ``map'' according to which one knows where to ``look'' in order to compute the spectrum. For example, if the Koenigs domain is $\Omega=\{x+iy\in\C:x>1,\; 0<y<\log x\}$, then $\theta_\Omega=\Theta_\Omega=0$. Therefore, in order to fully compute the point spectrum, in view of Theorem \ref{thm:bergman phs}(a), we just need to examine whether the point $it$, $t<0$, belong to $\sigma_{p,\alpha}$.
	
	Next, we turn our attention to parabolic semigroups of zero hyperbolic step. Unfortunately, in this case we do not have a convexity result at our disposal for the point spectrum. Hence, the inclusions below are not as strict. Since we are now not contained in any horizontal half-plane, we are in need of upper and lower, inner and outer arguments. The proof follows the same logic as in the previous theorem. For the sake of avoiding repetition, we omit it.

	\begin{theorem}\label{thm:bergman 0hs}
		Let $(\phi_t)$ be a parabolic semigroup of zero hyperbolic step in $\D$ which induces the semigroup of composition operators $(T_t)$. Suppose that $\Omega$ is the Koenigs domain of $(\phi_t)$ and that $\sigma_{p,\alpha}$ is the point spectrum of the infinitesimal generator of $(T_t)$ when acting on the Bergman space $A_\alpha^p$, $p\ge1$, $\alpha>-1$. Then:
		\begin{enumerate}
			\item[\textup{(a)}] If $\theta_\Omega=\Theta_\Omega=0$, then $iS(0,\pi)\cup\{0\}\subseteq\sigma_{p,\alpha}\subseteq i\overline{S(0,\pi)}$.
			\item[\textup{(b)}] If $\theta_\Omega=\Theta_\Omega\in(0,\pi)$ (and hence $\theta_\Omega^-=\Theta_\Omega^-, \theta_\Omega^+=\Theta_\Omega^+)$, then
			$$iS(\theta_\Omega^-,\pi-\theta_\Omega^+)\cup\{0\}\subseteq \sigma_{p,\alpha} \subseteq i\overline{S(\theta_\Omega^-,\pi-\theta_\Omega^+)}.$$
			\item[\textup{(c)}] If $\theta_\Omega=\pi$, then $\sigma_{p,\alpha}\subseteq \{ie^{i\theta_\Omega^-}t:t\ge0\}$.
			\item[\textup{(d)}] If $\theta_\Omega\in(\pi,2\pi]$, then $\sigma_{p,\alpha}=\{0\}$.
			\item[\textup{(e)}] If $\theta_\Omega<\min\{\pi,\Theta_\Omega\}$, then $iS(\Theta_\Omega^-,\pi-\Theta_\Omega^+)\cup\{0\}\subseteq\sigma_{p,\alpha}\subseteq i\overline{S(\theta_\Omega^-,\pi-\theta_\Omega^+)}$. In case $\Theta_\Omega\ge\pi$, the set on the left becomes $\{0\}$, while if $\theta_\Omega=0$, the set on the right becomes $i\overline{S(0,\pi)}$.
		\end{enumerate} 
	\end{theorem}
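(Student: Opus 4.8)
The plan is to mirror the proof of Theorem \ref{thm:bergman phs}, sandwiching $\Omega$ between two angular sectors of nearly optimal opening and then transporting the exact computation of Lemma \ref{lm:bergman sector} through the monotonicity of the point spectrum (Lemma \ref{lem:bergman spectrum monotonicity}). The crucial difference from the positive hyperbolic step case is that Lemma \ref{lm:convexity in the upper direction} is no longer available, so one cannot upgrade open sectors to half-open ones in the limit; this is exactly why the lower inclusions below are through open sectors rather than half-open ones.

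First I would fix $p\ge1$, $\alpha>-1$ and record the general inclusion $\sigma_{p,\alpha}\subseteq i\overline{S(0,\pi)}$, which comes from $\omega_{p,\alpha}=0$ (Corollary \ref{cor:type bergman}(b)) together with \eqref{eq:spectrum bergman inclusion}, as well as the fact that $0\in\sigma_{p,\alpha}$ always. The geometric core is then the construction, for each small $\epsilon>0$, of vertices $p_\epsilon,q_\epsilon\in\C$ with
\begin{equation*}
	p_\epsilon+S(-(\theta_\Omega^--\epsilon),\theta_\Omega^+-\epsilon)\subseteq\Omega\subseteq q_\epsilon+S(-(\Theta_\Omega^-+\epsilon),\Theta_\Omega^++\epsilon),
\end{equation*}
the inner containment being used only when $\theta_\Omega>0$. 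This is precisely the statement that the lower/upper inner (resp. outer) arguments assemble into a single two-sided sector straddling the positive real direction; it follows from the definitions \eqref{eq:lower inner argument}--\eqref{eq:lower outer argument} and their upper analogues together with the convexity of $\Omega$ in the positive direction, see \cite[Section 3]{our_finiteshift}.

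With the sandwich in hand I would apply Lemma \ref{lem:bergman spectrum monotonicity} to obtain
\begin{equation*}
	\sigma_{p,\alpha}\bigl(q_\epsilon+S(-(\Theta_\Omega^-+\epsilon),\Theta_\Omega^++\epsilon)\bigr)\subseteq\sigma_{p,\alpha}\subseteq\sigma_{p,\alpha}\bigl(p_\epsilon+S(-(\theta_\Omega^--\epsilon),\theta_\Omega^+-\epsilon)\bigr),
\end{equation*}
and evaluate the two outer spectra by Lemma \ref{lm:bergman sector} (using the remark that a translated vertex is harmless). When the relevant opening lies in $(0,\pi)$, part (a) of that lemma returns a closed sector $i\overline{S(\cdot,\cdot)}$: letting $\epsilon\to0$, the nested intersection of the upper bounds produces $i\overline{S(\theta_\Omega^-,\pi-\theta_\Omega^+)}$, while the increasing union of the lower bounds produces the open sector $iS(\Theta_\Omega^-,\pi-\Theta_\Omega^+)$. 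Combining these yields (b) (where $\theta_\Omega=\Theta_\Omega$ collapses both sides to the same angles) and the generic case (e); case (a) is the degenerate instance where the inner sector vanishes and only the general bound $i\overline{S(0,\pi)}$ survives on the right. For (c) one uses $\pi-\theta_\Omega^+=\theta_\Omega^-$, so the upper-bounding sectors shrink to the single ray $\{ie^{i\theta_\Omega^-}t:t\ge0\}$ as $\epsilon\to0$; and for (d) the inner sector already has opening $>\pi$, so Lemma \ref{lm:bergman sector}(c) forces the upper bound to be $\{0\}$, hence equality with $\{0\}$.

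The main obstacle is the geometric assembly step: turning the separately defined lower and upper arguments into genuine common-vertex sectors inside and outside $\Omega$ with controlled opening. One must verify that a sector whose lower arm lies in $\Omega^-$ and whose upper arm lies in $\Omega^+$ can be realized with a single vertex and opening approaching $\theta_\Omega^-+\theta_\Omega^+$ (resp. $\Theta_\Omega^-+\Theta_\Omega^+$), which rests on the horizontal translation invariance $\Omega+t\subseteq\Omega$ and elementary cone inclusions. A secondary, purely bookkeeping point is deciding open versus closed sectors in the limit $\epsilon\to0$: closed sectors arise from intersections (the upper bounds) and open ones from unions (the lower bounds), and, unlike in Theorem \ref{thm:bergman phs}, no extra boundary ray can be recovered here, since Lemma \ref{lm:convexity in the upper direction} fails for zero hyperbolic step. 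Finally, the Hardy-space statement follows verbatim by taking $\alpha=-1$ and replacing the Hardy-Stein formula (Theorem \ref{thm:bergman hardy-stein}) with the Littlewood-Paley formula (Theorem \ref{thm:hardy hardy-stein}) throughout.
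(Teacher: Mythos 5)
Your proposal is correct and follows essentially the same route the paper intends: the paper omits the proof of Theorem \ref{thm:bergman 0hs} precisely because it ``follows the same logic'' as Theorem \ref{thm:bergman phs}, namely sandwiching $\Omega$ between common-vertex sectors of nearly optimal opening, transporting Lemma \ref{lm:bergman sector} through the monotonicity of Lemma \ref{lem:bergman spectrum monotonicity}, and losing the boundary ray in the lower inclusions exactly because Lemma \ref{lm:convexity in the upper direction} is unavailable in the zero hyperbolic step case. The geometric assembly step you flag (gluing the lower and upper arguments into a single sector with one vertex, using $\Omega+t\subseteq\Omega$ and the single-ray horizontal cross-sections given by the defining function) is indeed the only point requiring a short argument, and it goes through as you indicate.
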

	
	Once again, Theorem \ref{thm:bergman 0hs} provides a helpful guide. For instance, if we have the Koenigs domain $\Omega=\{x+iy:x>1,\; y<|\log x|\}$, it is easy to see that $\theta_\Omega=\Theta_\Omega=0$. Consequently, in view of Theorem \ref{thm:bergman 0hs}(a), we just need to examine whether the points $it$, $t\in\mathbb{R}\setminus\{0\}$, belong to $\sigma_{p,\alpha}$ in order to have a complete characterization.
	
	\begin{remark}\label{rem:equality when outer argument is minimum}
		Let $(\phi_t)$ be a parabolic semigroup with Koenigs domain $\Omega$ such that $\theta_\Omega=\Theta_\Omega<\pi$. Recall that by definition, the outer argument of $(\phi_t)$ is in general an infimum. Evidently, there is a wide variety of convex in the positive direction, simply connected domains where this infimum is in fact a minimum. Suppose that for the given $(\phi_t)$ and $\Omega$, $\Theta_\Omega$ is indeed a minimum. As a result, there exists $q\in\C$ such that $\Omega\subseteq q+S(-\Theta_\Omega^-,\Theta_\Omega^+)$. Then, combining Proposition \ref{lem:bergman spectrum monotonicity} and Lemma \ref{lm:bergman sector}, we deduce $\sigma_{p,\alpha}\supseteq i\overline{S(\Theta_\Omega^-,\pi-\Theta_\Omega^+)}$, for all $p\ge1$ and all $\alpha>-1$. However, by Theorems \ref{thm:bergman phs} and \ref{thm:bergman 0hs}, the reverse inclusion also holds and we obtain equality for the point spectrum. Similarly, if $\theta_\Omega=\Theta_\Omega=\pi$ and the outer argument is a minimum, we have equality once again, and the point spectrum is a half-line. For instance, for the semigroup with Koenigs domain $\Omega=\{x+iy\in\C:x>0,\; 0<y<e^x\}$, we obtain at once $\sigma_{p,\alpha}=i\overline{S(0,\pi/2)}$.
	\end{remark}
	
	We conclude our study on Bergman spaces with a final result about semigroups of finite shift.
	
	\begin{corollary}\label{cor:bergman finite shift}
		Let $(\phi_t)$ be a semigroup of finite shift in $\D$ which induces the semigroup of composition operators $(T_t)$ acting on the Bergman space $A_\alpha^p$, $p\ge1$, $\alpha>-1$. Then
		$\sigma_{p,\alpha}=\{it:t\ge0\}$.
	\end{corollary}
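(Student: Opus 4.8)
The plan is to observe that this corollary is an immediate consequence of Theorem \ref{thm:bergman phs}(c) together with the structural facts about finite-shift semigroups recorded in Subsection \ref{sub:semigroups}. First I would recall that, by the discussion there, every semigroup of finite shift is necessarily parabolic of positive hyperbolic step; this is precisely the class of semigroups to which Theorem \ref{thm:bergman phs} applies, so we may invoke it for the given $(\phi_t)$.

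The second step is to pin down the inner and outer arguments of $(\phi_t)$. By \cite[Proposition 3.2]{our_finiteshift} a semigroup of finite shift has inner argument $\theta_\Omega=\pi$. Since the two arguments always satisfy $\theta_\Omega\le\Theta_\Omega\le\pi$, where the upper bound $\Theta_\Omega\le\pi$ comes from the fact that the outer argument in \eqref{eq:outer argument} is an infimum taken over $(0,\pi]$, the equality $\theta_\Omega=\pi$ forces $\theta_\Omega=\Theta_\Omega=\pi$. This places $(\phi_t)$ exactly in case (c) of Theorem \ref{thm:bergman phs}.

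Finally I would simply apply Theorem \ref{thm:bergman phs}(c), which yields $\sigma_{p,\alpha}=\{it:t\ge0\}$ for every $p\ge1$ and $\alpha>-1$, as claimed.

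There is essentially no analytic obstacle here: all the substantive work has already been carried out upstream, namely the exact computation of the point spectrum for an angular sector via the Hardy--Stein Formula and the Green's function estimates of Lemma \ref{lm:bergman sector}, the domain-monotonicity of Lemma \ref{lem:bergman spectrum monotonicity}, and the upper-direction convexity of Lemma \ref{lm:convexity in the upper direction}, all of which are packaged into Theorem \ref{thm:bergman phs}. The only point deserving any care is verifying that the cited geometric facts genuinely apply, i.e.\ that ``finite shift'' really does land us in the positive hyperbolic step regime with inner argument $\pi$; once that identification is made, the conclusion follows mechanically.
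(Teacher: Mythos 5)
Your proposal is correct and follows exactly the same route as the paper: finite shift implies parabolic of positive hyperbolic step with $\theta_\Omega=\pi$ (hence $\Theta_\Omega=\pi$ as well), and Theorem \ref{thm:bergman phs}(c) then gives the result. The only difference is that you spell out why $\theta_\Omega=\pi$ forces $\Theta_\Omega=\pi$, which the paper leaves implicit.
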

	\begin{proof}
		As we mentioned in Subsection \ref{sub:semigroups}, a semigroup of finite shift is necessarily parabolic of positive hyperbolic step and has inner (and by extension outer) argument equal to $\pi$. Hence, $(\phi_t)$ falls into the category described in Theorem \ref{thm:bergman phs}(c) and the result follows.
	\end{proof}

	\subsection{Hardy Spaces}
	We conclude our study with the Hardy spaces $H^p$, $p\ge1$. A careful consideration of the proof of Lemma \ref{lm:bergman sector} shows that the proof still holds if in the application of the Hardy-Stein Formula we allow $\alpha=-1$. In this way, we obtain the Littlewood-Paley Formula in Theorem \ref{thm:hardy hardy-stein} and hence Lemma \ref{lm:bergman sector} holds identically for Hardy spaces.
	
	\begin{lemma}\label{lm:hardy sector}
		Let $(\phi_t)$ be a semigroup in $\D$ with Koenigs domain $S(a,b)$, $-\pi\le a<b\le\pi$. Suppose that $(\phi_t)$ induces the semigroup of composition operators $(T_t)$ acting on the Hardy space $H^p$, $p\ge1$, and the point spectrum of its infinitesimal generator is $\sigma_p$.
		\begin{enumerate}
			\item[\textup{(a)}] If $b-a\in(0,\pi)$, then $\sigma_p=i\overline{S(-a,\pi-b)}$.
			\item[\textup{(b)}] If $b-a=\pi$, then $\sigma_p=\{ie^{-ia}t:t\ge0\}$.
			\item[\textup{(c)}] If $b-a\in(\pi,2\pi]$, then $\sigma_p=\{0\}$.
		\end{enumerate}
	\end{lemma}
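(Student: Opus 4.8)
The plan is to mirror the proof of Lemma~\ref{lm:bergman sector} essentially verbatim, replacing the Hardy--Stein Formula (Theorem~\ref{thm:bergman hardy-stein}) with the Littlewood--Paley Formula (Theorem~\ref{thm:hardy hardy-stein}). Since the latter carries the weight $\log(1/|z|)$ rather than $(\log(1/|z|))^{\alpha+2}$, this amounts to running the previous computation with $\alpha=-1$. Concretely, I would fix $p\ge1$ and $\lambda\neq0$, observe that $0\in\sigma_p$ trivially, and use \eqref{eq:spectrum hardy} to reduce the membership $\lambda\in\sigma_p$ to $e^{\lambda h}\in H^p$. Applying Theorem~\ref{thm:hardy hardy-stein}, performing the change of variables $w=h(z)$, and using $\log(1/|z|)=g_{\D}(0,z)=g_{S(a,b)}(h(0),w)$ together with the conformal invariance of Green's function, I arrive at exactly the integral \eqref{eq:bergman main 2}, but with the exponent $\alpha+2$ on the Green's function replaced by $1$.

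From there the argument is unchanged. The singularity at the pole $w=h(0)$, now weighted by $g_{S(a,b)}(h(0),w)\sim\log(1/|w-h(0)|)$, is still integrable over a two-dimensional neighborhood, so only the behavior for large $r$ matters; the estimate \eqref{eq:bergman main 4} of the Green's function on the sector (via Example~\ref{ex:green sector}) is identical, and in polar coordinates one reduces to \eqref{eq:bergman main 6} with the exponent of $r$ now equal to $1-\pi/(b-a)$. The decisive convergence condition, namely that $\cos(\arg\lambda+\theta)<0$ for every $\theta\in(a,b)$, equivalently $\arg\lambda\in[\pi/2-a,\,3\pi/2-b]$, is governed solely by the exponential factor $e^{pr|\lambda|\cos(\arg\lambda+\theta)}$ and is therefore insensitive to the value of $\alpha$. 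This yields the three cases at once: for $b-a\in(0,\pi)$ the admissible arguments fill a closed sector $i\overline{S(-a,\pi-b)}$, for $b-a=\pi$ they collapse to the single ray $\{ie^{-ia}t:t\ge0\}$, and for $b-a\in(\pi,2\pi]$ no nonzero $\lambda$ survives, giving $\sigma_p=\{0\}$.

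The only point deserving genuine care, and the sole place where the passage $\alpha\mapsto-1$ could a priori matter, is the inclusion of the \emph{boundary} rays in case (a), i.e.\ confirming that $\sigma_p$ is the closed sector rather than the open one (in contrast to the Dirichlet situation of Lemma~\ref{lm:dirichlet standard domain}(c)). When $\arg\lambda=\pi/2-a$ the denominator $\cos(\arg\lambda+\theta)$ produced after integrating in $r$ vanishes linearly as $\theta\to a$; but with $\alpha=-1$ the numerator weight $(\cos\psi_\theta)^{\alpha+2}=\cos\psi_\theta$ also vanishes linearly there, since $\psi_a=-\pi/2$, so the quotient remains bounded and the $\theta$-integral converges. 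The symmetric computation disposes of $\arg\lambda=3\pi/2-b$. Hence the boundary rays are retained exactly as in the Bergman case, no new obstacle arises, and the full proof of Lemma~\ref{lm:bergman sector} transfers with $\alpha=-1$.
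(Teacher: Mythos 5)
Your proposal is correct and coincides with the paper's own argument, which likewise just reruns the proof of Lemma \ref{lm:bergman sector} with $\alpha=-1$ so that the Hardy--Stein Formula becomes the Littlewood--Paley Formula. Your explicit check that the boundary rays survive when $\alpha=-1$ (the linear vanishing of $\cos\psi_\theta$ cancelling that of $\cos(\arg\lambda+\theta)$) is exactly the one point the paper leaves to the reader, and you handle it correctly.
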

	
	In the case where $\Omega$ is an angular sector symmetric with respect to the real axis of inner argument less or equal to $\pi$, the full spectrum of the infinitesimal generator when $(T_t)$ acts on $H^2$ is completely calculated in \cite[Corollary 7.42]{Cowen-Maccluer}. In combination with our results, it follows that in this case the full spectrum and the point spectrum of the infinitesimal generator coincide.
	
	Using the above lemma as a starting point, we may extract the following counterparts of Theorems \ref{thm:bergman phs} and \ref{thm:bergman 0hs} for the Hardy spaces. The proofs are exactly the same and rely on Lemmas \ref{lem:bergman spectrum monotonicity} and \ref{lm:hardy sector} along with the convexity result found in \cite[Proposition 5]{betsakos_eig}. Recall that for any semigroup $(T_t)$ acting on $H^p$, $p\ge1$, and induced by a parabolic semigroup, its type is always equal to $0$ as seen in Corollary \ref{cor:type hardy}. Therefore, the full spectrum and the point spectrum of $\Gamma_p$ are trivially contained in the closed left half-plane.
	
	\begin{theorem}\label{thm:hardy phs}
		Let $(\phi_t)$ be a parabolic semigroup of positive hyperbolic step in $\D$ which induces the semigroup of composition operators $(T_t)$. Suppose that $\Omega$ is the Koenigs domain of $(\phi_t)$ and that $\sigma_p$ is the point spectrum of the infinitesimal generator of $(T_t)$ when acting on the Hardy space $H^p$, $p\ge1$. Then:
		\begin{enumerate}
			\item[\textup{(a)}] If $\theta_\Omega=\Theta_\Omega=0$, then $iS[0,\pi)\cup\{0\}\subseteq\sigma_p\subseteq i\overline{S(0,\pi)}$.
			\item[\textup{(b)}] If $\theta_\Omega=\Theta_\Omega\in(0,\pi)$, then $iS[0,\pi-\theta_\Omega)\cup\{0\}\subseteq \sigma_p \subseteq i\overline{S(0,\pi-\theta_\Omega)}$.
			\item[\textup{(c)}] If $\theta_\Omega=\Theta_\Omega=\pi$, then $\sigma_p=\{it:t\ge0\}$.
			\item[\textup{(d)}] If $\theta_\Omega<\Theta_\Omega$, then $iS[0,\pi-\Theta_\Omega)\cup\{0\}\subseteq\sigma_p\subseteq i\overline{S(0,\pi-\theta_\Omega)}$. In case $\Theta_\Omega=\pi$, the set on the left becomes the vertical half-line $\{it:t\ge0\}$, while if $\theta_\Omega=0$, the set on the right becomes $i\overline{S(0,\pi)}$.
		\end{enumerate}
	\end{theorem}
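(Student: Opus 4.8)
The plan is to port the proof of Theorem \ref{thm:bergman phs} verbatim, replacing each Bergman-space ingredient by its Hardy-space twin. Three substitutions are needed and one tool carries over unchanged: the explicit sector computation Lemma \ref{lm:bergman sector} is replaced by Lemma \ref{lm:hardy sector}; the universal containment $\sigma_{p,\alpha}\subseteq i\overline{S(0,\pi)}$, which stemmed from the vanishing of the type in Corollary \ref{cor:type bergman}, is replaced by the identical $\sigma_p\subseteq i\overline{S(0,\pi)}$ coming from Corollary \ref{cor:type hardy}; and the convexity-in-the-upper-direction of the point spectrum, proved as Lemma \ref{lm:convexity in the upper direction} in the Bergman setting, is supplied for $H^p$ by \cite[Proposition 5]{betsakos_eig}. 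The comparison Lemma \ref{lem:bergman spectrum monotonicity} already addresses both families simultaneously, so it is reused as is. Throughout I fix $p\ge1$ and recall that $0\in\sigma_p$ together with the universal inclusion $\sigma_p\subseteq i\overline{S(0,\pi)}$.

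For part (a) I would exploit $\Theta_\Omega=0$ to produce a decreasing sequence $a_n\downarrow 0$ and points $q_n$ with $\Omega\subseteq q_n+S(0,a_n)$; monotonicity and Lemma \ref{lm:hardy sector}(a) give $i\overline{S(0,\pi-a_n)}\subseteq\sigma_p$ for every $n$, and the union of these nested sets delivers the lower bound $iS[0,\pi)\cup\{0\}\subseteq\sigma_p$ (the convexity result being invoked, if one is cautious about boundary rays, to append the half-line $\{it:t\ge0\}$). The universal upper bound closes the case. Part (c) is the dual situation: from $\theta_\Omega=\pi$ one extracts $a_n\uparrow\pi$ with $p_n+S(0,a_n)\subseteq\Omega$, so monotonicity forces $\sigma_p\subseteq i\overline{S(0,\pi-a_n)}$, and the nested intersection collapses to $\sigma_p\subseteq\{it:t\ge0\}$; since convexity and $0\in\sigma_p$ give the reverse inclusion, equality follows.

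Parts (b) and (d) are handled by squeezing $\Omega$ between an inner and an outer sector. When $\theta_\Omega=\Theta_\Omega\in(0,\pi)$ I would choose $a_n\uparrow\theta_\Omega$ and $b_n\downarrow\theta_\Omega$ with $p_n+S(0,a_n)\subseteq\Omega\subseteq q_n+S(0,b_n)$; Lemma \ref{lm:hardy sector}(a) together with monotonicity then yields $i\overline{S(0,\pi-b_n)}\subseteq\sigma_p\subseteq i\overline{S(0,\pi-a_n)}$ for all $n$, and taking a union on the left and a nested intersection on the right produces exactly $iS[0,\pi-\theta_\Omega)\cup\{0\}\subseteq\sigma_p\subseteq i\overline{S(0,\pi-\theta_\Omega)}$. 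Case (d), with $\theta_\Omega<\Theta_\Omega$, uses the same two-sided squeeze, now pairing the inner argument $\theta_\Omega$ with the outer containment of $\sigma_p$ and the outer argument $\Theta_\Omega$ with the inner containment; the degenerate sub-cases $\Theta_\Omega=\pi$ and $\theta_\Omega=0$ are read off from Lemma \ref{lm:hardy sector}(b) and the universal bound, respectively.

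I do not anticipate a substantive obstacle, since every analytic difficulty has been absorbed into Lemma \ref{lm:hardy sector}, whose proof is literally that of Lemma \ref{lm:bergman sector} with the admissible endpoint $\alpha=-1$, turning the Hardy--Stein formula into the Littlewood--Paley formula of Theorem \ref{thm:hardy hardy-stein}. The only point demanding a moment's care is that the convexity input is now the external \cite[Proposition 5]{betsakos_eig} rather than an internally proved lemma; I would verify that its hypotheses match our normalization (a parabolic semigroup of positive hyperbolic step whose Koenigs domain lies in a fixed upper half-plane), so that the half-line $\{it:t\ge0\}$ may legitimately be adjoined and the inner boundary of $\sigma_p$ sharpened from open to half-open exactly as in the Bergman argument.
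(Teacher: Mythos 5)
Your proposal is correct and coincides with the paper's own proof: the authors state explicitly that Theorem \ref{thm:hardy phs} is obtained by repeating the proof of Theorem \ref{thm:bergman phs} verbatim, using Lemma \ref{lem:bergman spectrum monotonicity} and Lemma \ref{lm:hardy sector} together with the convexity result of \cite[Proposition 5]{betsakos_eig} in place of Lemma \ref{lm:convexity in the upper direction}, and Corollary \ref{cor:type hardy} for the containment $\sigma_p\subseteq i\overline{S(0,\pi)}$. These are exactly the substitutions you identify, so no further comment is needed.
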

	
	\begin{theorem}\label{thm:hardy 0hs}
		Let $(\phi_t)$ be a parabolic semigroup of zero hyperbolic step in $\D$ which induces the semigroup of composition operators $(T_t)$. Suppose that $\Omega$ is the Koenigs domain of $(\phi_t)$ and that $\sigma_p$ is the point spectrum of the infinitesimal generator of $(T_t)$ when acting on the Hardy space $H^p$, $p\ge1$. Then:
		\begin{enumerate}
			\item[\textup{(a)}] If $\theta_\Omega=\Theta_\Omega=0$, then $iS(0,\pi)\cup\{0\}\subseteq\sigma_p\subseteq i\overline{S(0,\pi)}$.
			\item[\textup{(b)}] If $\theta_\Omega=\Theta_\Omega\in(0,\pi)$ (and hence $\theta_\Omega^-=\Theta_\Omega^-, \theta_\Omega^+=\Theta_\Omega^+)$, then
			$$iS(\theta_\Omega^-,\pi-\theta_\Omega^+)\cup\{0\}\subseteq \sigma_p \subseteq i\overline{S(\theta_\Omega^-,\pi-\theta_\Omega^+)}.$$
			\item[\textup{(c)}] If $\theta_\Omega=\pi$, then $\sigma_p\subseteq \{ie^{i\theta_\Omega^-}t:t\ge0\}$.
			\item[\textup{(d)}] If $\theta_\Omega\in(\pi,2\pi]$, then $\sigma_p=\{0\}$.
			\item[\textup{(e)}] If $\theta_\Omega<\min\{\pi,\Theta_\Omega\}$, then $iS(\Theta_\Omega^-,\pi-\Theta_\Omega^+)\cup\{0\}\subseteq\sigma_p\subseteq i\overline{S(\theta_\Omega^-,\pi-\theta_\Omega^+)}$. In case $\Theta_\Omega\ge\pi$, the set on the left becomes $\{0\}$, while if $\theta_\Omega=0$, the set on the right becomes $i\overline{S(0,\pi)}$.
		\end{enumerate} 
	\end{theorem}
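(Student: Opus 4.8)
The plan is to replicate, essentially verbatim in structure, the proof of Theorem~\ref{thm:bergman 0hs} for the Bergman spaces, substituting the Hardy-space sector computation of Lemma~\ref{lm:hardy sector} for its Bergman counterpart Lemma~\ref{lm:bergman sector}. Two ingredients make this transfer work without modification. First, the domain-monotonicity principle of Lemma~\ref{lem:bergman spectrum monotonicity} is stated and proved simultaneously for $H^p$ and $A_\alpha^p$ (the Hardy case being the instance $\alpha=-1$ of the Hardy--Stein argument), so ``smaller Koenigs domain forces larger point spectrum'' is available directly. Second, the universal inclusion $\sigma_p\subseteq i\overline{S(0,\pi)}$ holds because a parabolic semigroup has type $\omega_p=0$ by Corollary~\ref{cor:type hardy}, combined with \eqref{eq:full spectrum type}. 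Throughout I keep the normalization $\Omega\subseteq S(-\pi,\pi)$, fix $p\ge1$, and use that $0\in\sigma_p$ always since constants lie in $H^p$.

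First I would reduce every case to a comparison of $\Omega$ with a single angular sector. Using the defining suprema and infima \eqref{eq:lower inner argument}--\eqref{eq:lower outer argument} together with their upper analogues, and the fact that the lower and upper arguments of $\Omega^-$ and $\Omega^+$ amalgamate into one sector with a common vertex (as in \cite[Section 3]{our_finiteshift}), I would produce for each $n$ a contained sector $q_n+S(-\theta_n^-,\theta_n^+)\subseteq\Omega$ with $\theta_n^\pm\uparrow\theta_\Omega^\pm$, and a containing sector $\Omega\subseteq Q_n+S(-\Theta_n^-,\Theta_n^+)$ with $\Theta_n^\pm\downarrow\Theta_\Omega^\pm$. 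Applying Lemma~\ref{lm:hardy sector} (whose spectra for these sectors are $i\overline{S(\theta_n^-,\pi-\theta_n^+)}$ and $i\overline{S(\Theta_n^-,\pi-\Theta_n^+)}$ when the opening is below $\pi$) and invoking Lemma~\ref{lem:bergman spectrum monotonicity}, the containing sectors force $\sigma_p\supseteq i\overline{S(\Theta_n^-,\pi-\Theta_n^+)}$, while the contained sectors force $\sigma_p\subseteq i\overline{S(\theta_n^-,\pi-\theta_n^+)}$.

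The case analysis then amounts to passing to the limit in $n$. For the lower bounds I would take the increasing union of the closed sectors $i\overline{S(\Theta_n^-,\pi-\Theta_n^+)}$, which opens up to $iS(\Theta_\Omega^-,\pi-\Theta_\Omega^+)$, recovering the left-hand inclusions of (a), (b) and (e); for the upper bounds I would take the decreasing intersection of the closed sectors $i\overline{S(\theta_n^-,\pi-\theta_n^+)}$, which closes down to $i\overline{S(\theta_\Omega^-,\pi-\theta_\Omega^+)}$, recovering the right-hand inclusions. This open-versus-closed dichotomy (a union of closed sets versus an intersection of closed sets) is exactly what produces the open sectors on the left and the closed sectors on the right. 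The degenerate cases follow from the same limiting process: when $\theta_\Omega=\pi$ the contained sectors have opening tending to $\pi$, so $\pi-\theta_n^+\to\theta_\Omega^-$ and the intersection collapses to the single ray $\{ie^{i\theta_\Omega^-}t:t\ge0\}$ (using Lemma~\ref{lm:hardy sector}(b) if the supremum is attained), giving (c); when $\theta_\Omega\in(\pi,2\pi]$ there is a contained sector of opening strictly larger than $\pi$, whose spectrum is $\{0\}$ by Lemma~\ref{lm:hardy sector}(c), forcing $\sigma_p=\{0\}$ and giving (d). When $\Theta_\Omega\ge\pi$ the containing sectors yield nothing beyond $\{0\}$, and when $\theta_\Omega=0$ the contained sectors degenerate so that the right-hand bound reduces to the universal $i\overline{S(0,\pi)}$; these account for the boundary remarks in (e).

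The step I expect to be the genuine obstacle is the geometric amalgamation in the second paragraph: converting the separate control of $\Omega^-$ and $\Omega^+$ by the lower sectors $S(-\theta,0)$ and the upper sectors $S(0,\theta)$ into a \emph{single} sector with one vertex that is simultaneously contained in (respectively contains) all of $\Omega$. This is where the additivity $\theta_\Omega=\theta_\Omega^-+\theta_\Omega^+$ and $\Theta_\Omega=\Theta_\Omega^-+\Theta_\Omega^+$ must be exploited carefully. It is precisely the absence, in the zero-hyperbolic-step regime, of any counterpart to the upper-direction convexity of Lemma~\ref{lm:convexity in the upper direction} that prevents these inclusions from being sharpened to equalities on the bounding rays, in contrast with the positive-hyperbolic-step statement of Theorem~\ref{thm:hardy phs}.
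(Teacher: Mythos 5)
Your proposal is correct and follows essentially the same route as the paper, which proves this theorem exactly by transplanting the argument of Theorem \ref{thm:bergman 0hs} with Lemma \ref{lm:hardy sector} in place of Lemma \ref{lm:bergman sector}, together with the monotonicity Lemma \ref{lem:bergman spectrum monotonicity} and the growth-bound inclusion $\sigma_p\subseteq i\overline{S(0,\pi)}$ from Corollary \ref{cor:type hardy}. The sector-amalgamation step you flag is indeed the only place requiring real work (it follows from connectedness plus convexity in the positive direction of $\Omega$), and your observation that no convexity result is available in the zero-hyperbolic-step case, so the boundary rays cannot be resolved, matches the paper's own remarks.
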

	
	Summing up, Theorems \ref{thm:bergman phs} and \ref{thm:hardy phs} constitute Theorem \ref{thm:intro bergman phs}, while Theorems \ref{thm:bergman 0hs} and \ref{thm:hardy 0hs} constitute Theorem \ref{thm:intro bergman 0hs}.
	
	\begin{remark}The counterpart of Remark \ref{rem:equality when outer argument is minimum} holds for the Hardy spaces as well. Note that these two remarks do not apply to the Dirichlet space since the point spectrum of the infinitesimal generator is not closed when the Koenigs domain is an angular sector.
	\end{remark}
	
	Similar results about the Hardy spaces recently appeared in \cite{carlos-javi}. In this article, the authors use harmonic measure to acquire results on the point spectrum $\sigma_p$, $p\ge1$, whenever the initial semigroup is parabolic. Their study is also done with respect to angular sectors containing and contained inside the Koenigs domain, although they restrict to the case where the outer argument is strictly positive. Among this subclass of Koenigs domains, they further proceed to an in-depth study in the special case of convex domains.
	
	One direct corollary of Theorem \ref{thm:hardy phs} is once more the complete characterization of the point spectrum whenever the initial holomorphic semigroup is of finite shift.
	
	\begin{corollary}\label{cor:hardy finite shift}
		Let $(\phi_t)$ be a semigroup of finite shift in $\D$ which induces the semigroup of composition operators $(T_t)$ acting on the Hardy space $H^p$, $p\ge1$. Then, for the point spectrum of its infinitesimal generator, we have $\sigma_p=\{it:t\ge0\}$
	\end{corollary}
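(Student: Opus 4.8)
The plan is to reduce the statement to the already-established Hardy-space counterpart of the angular-sector analysis, namely Theorem \ref{thm:hardy phs}(c), by first pinning down the coarse geometry of the Koenigs domain of a finite-shift semigroup. Recall from Subsection \ref{sub:semigroups} that semigroups of finite shift are necessarily parabolic of positive hyperbolic step; this is exactly what makes Theorem \ref{thm:hardy phs} applicable rather than Theorem \ref{thm:hardy 0hs}. So the first step is simply to record this classification and thereby place $(\phi_t)$ in the correct regime, with Koenigs domain $\Omega$ normalized to lie in the closed upper half-plane so that the inner and outer arguments $\theta_\Omega,\Theta_\Omega$ are well-defined.

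Next I would invoke \cite[Proposition 3.2]{our_finiteshift}, cited in Subsection \ref{sub:semigroups}, which asserts that a finite-shift semigroup has inner argument $\theta_\Omega=\pi$. Since $\theta_\Omega\le\Theta_\Omega\le\pi$ always holds for a parabolic semigroup of positive hyperbolic step, the bound $\theta_\Omega=\pi$ forces $\Theta_\Omega=\pi$ as well. Hence we land precisely in case (c) of Theorem \ref{thm:hardy phs}, namely $\theta_\Omega=\Theta_\Omega=\pi$.

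Applying Theorem \ref{thm:hardy phs}(c) then yields $\sigma_p=\{it:t\ge0\}$ directly, for every $p\ge1$, completing the argument. There is essentially no analytic obstacle here: the entire content has been front-loaded into Theorem \ref{thm:hardy phs} (which itself rests on Lemma \ref{lm:hardy sector}, the monotonicity Lemma \ref{lem:bergman spectrum monotonicity}, and the convexity result of \cite[Proposition 5]{betsakos_eig}) and into the structural fact \cite[Proposition 3.2]{our_finiteshift}. The only point requiring genuine care is the consistency of normalizations: one must make sure that the half-plane normalization used implicitly in Theorem \ref{thm:hardy phs} and in the definition of $\theta_\Omega$ is the same one under which the finite-shift classification places $\Omega$, so that the half-line obtained is indeed $\{it:t\ge0\}$ rather than a rotated copy. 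Once that bookkeeping is checked, the corollary is immediate, exactly mirroring the Bergman-space argument in Corollary \ref{cor:bergman finite shift}.
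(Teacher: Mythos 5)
Your proposal is correct and follows essentially the same route as the paper: the paper's own argument (given explicitly for the Bergman analogue, Corollary \ref{cor:bergman finite shift}, and implicitly for the Hardy case) likewise records that a finite-shift semigroup is parabolic of positive hyperbolic step with $\theta_\Omega=\pi$, hence $\Theta_\Omega=\pi$, and then applies Theorem \ref{thm:hardy phs}(c). Your extra remarks on the normalization and on deducing $\Theta_\Omega=\pi$ from $\theta_\Omega\le\Theta_\Omega\le\pi$ are accurate but do not change the argument.
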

	
	We end the article with one non-trivial example where we explicitly calculate the point spectrum. Through this example, we demonstrate the potential complexity in the shape of the point spectrum with respect to the Hardy spaces induced by a parabolic semigroup, even in the case of positive hyperbolic step.
	
	\begin{figure}[ht]
		\centering
		\includegraphics[width=1\linewidth]{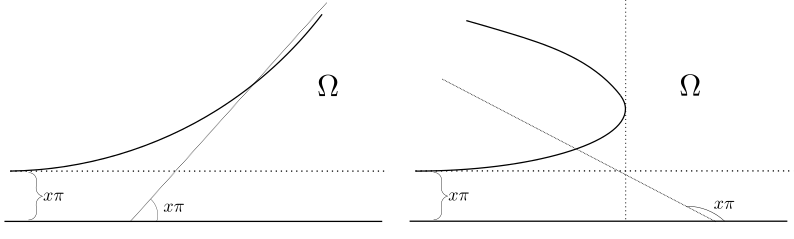}
		\caption{The domain $\Omega$ defined by $\psi_{\Omega}$ in Example \ref{ex:for figure}, for $x\in(0,1/2]$ on the left and for $x\in(1/2,1)$ on the right.} 
		\label{fig:Omega_x}
	\end{figure}
	\begin{example}\label{ex:for figure}
		Let $x\in(0,1)$ and consider the Koenigs domain $\Omega$ produced by the defining function $\psi_\Omega:(0,+\infty)\to[-\infty,+\infty)$ with
		$$\psi_\Omega(y)=\begin{cases}
			-\infty, \quad y\in(0,x\pi]\\
			(y-x\pi)\cot(x\pi)+\log(y-x\pi)-\log\sin(x\pi), \quad y\in(x\pi,+\infty).
		\end{cases}$$
		Clearly $\psi_\Omega$ is upper semi-continuous and $\Omega$ is well-defined as a Koenigs domain corresponding to a parabolic semigroup $(\phi_t)$ of positive hyperbolic step, due to the domain of $\psi_\Omega$. One may compute that the corresponding Koenigs function $h:\D\to\Omega$ is
		$$h(z)=\left(i\frac{1+z}{1-z}\right)^x+\log\left(i\frac{1+z}{1-z}\right)^x, \quad z\in\D,$$
		and that $\Theta_\Omega=\pi$, while $\theta_\Omega=x\pi$, as seen in Figure \ref{fig:Omega_x}. Also, for any choice of $x\in(0,1)$, $\Omega$ always contains the maximal horizontal strip $\{w\in\C:0<\mathrm{Im}w<x\pi\}$. Let $\sigma_p$ be the point spectrum of the induced semigroup of composition operators acting on the Hardy space $H^p$, $p\ge1$. At once, combining Theorem \ref{thm:hardy phs}(b) and \cite[Theorem 1(b)]{betsakos_eig}, we have that 
		\begin{equation}\label{eq:example1}
			\sigma_p\subseteq\left\{\lambda \in \mathbb{C}: -\frac{1}{px}<\mathrm{Re}\lambda\leq 0\right\}\cap i\overline{S(0,\pi-x\pi)}.
		\end{equation}
		We now examine the converse inclusion. So we fix $\lambda$ in the right-hand side of \eqref{eq:example1} and we study the integral of 
		\begin{align*}
			|e^{\lambda p h(z)}|&=\left|\exp\left\{p|\lambda|\left|\frac{1+z}{1-z}\right|^{x}e^{i(\frac{\pi x}{2}+\arg(\lambda)+x\arg(\frac{1+z}{1-z}))}+ix\frac{\pi}{2}+p\lambda x\log\left(i\frac{1+z}{1-z}\right)\right\}\right|\\
			&\le e^{px|\mathrm{Im}\lambda|\pi}\exp\left(p|\lambda|\left|\frac{1+z}{1-z}\right|^{x}\cos\left(\frac{\pi x}{2}+\arg(\lambda)+x\arg\left(\frac{1+z}{1-z}\right)\right)\right)\left|\frac{1+z}{1-z}\right|^{px\mathrm{Re}\lambda}.
		\end{align*}
		Since $\lambda\in i\overline{S(0,\pi-x\pi)}$, we have $\arg\lambda\in[\pi/2,(3/2-x)\pi]$. In this case, we understand that $\cos\left(\frac{\pi x}{2}+\arg(\lambda)+ x \arg\left(\frac{1+z}{1-z}\right)\right)\le0$, for all $z\in\mathbb{D}$, and as a result 
		\begin{equation}\label{eq:example2}
			\int_0^{2\pi} \left|e^{\lambda h(re^{i\theta})}\right|^p d\theta \le\int_0^{2\pi}\left|\frac{1+re^{i \theta}}{1-re^{i\theta}}\right|^{p x \mathrm{Re}\lambda} d\theta
		\end{equation}
		for all $r\in(0,1)$. But the preceding integral is bounded with respect to $r$, whenever $-\frac{1}{px}<\mathrm{Re}\lambda\le0$. Taking supremums with respect to $r\in(0,1)$, in \eqref{eq:example2} we get that $e^{\lambda h}\in H^p$, for all $p\ge1$. In this sense, we deduce that 
		$$\left\{\lambda\in\mathbb{C}: \arg\lambda\in\left[\frac{\pi}{2},\left(\frac{3}{2}-x\right)\pi\right] \, \textup{and} \,  \mathrm{Re}\lambda\in\left(-\frac{1}{px},0\right]\right\}\subseteq\sigma_p$$
		and hence equality prevails.
	\end{example}
	
	\begin{remark}
		We close the article with one final remark. There exist certain known inclusions between the Banach spaces we examined during the course of this work. Indeed, $\mathcal{D}\subset H^p\subset A^p_\alpha$, for all $p\ge1$ and all $\alpha>-1$. Since the point spectrum in each space can be written as the set of all $\lambda\in\C$ such that $e^{\lambda h}$ belongs to the space, where $h$ is the respective Koenigs function, we have that
		\begin{equation}\label{eq:space inclusions}
			\sigma_{\mathcal{D}}\subseteq \sigma_p \subseteq \sigma_{p,\alpha},\quad \textup{for all }p\ge1 \textup{ and all }\alpha>-1.
		\end{equation}
		Therefore, using all the lemmas and theorems described before, finding the point spectrum with respect to one of the spaces might aid in finding the point spectrum with respect to other spaces as well.
	\end{remark}

\section*{Acknowledgements}
	We thank Carlos G\'{o}mez-Cabello and F. Javier Gonz\'{a}lez-Do\~{n}a for sharing their manuscript \cite{carlos-javi} with us following the release of the IWOTA 2025 conference book of abstracts, where part of the results of this work were presented.

	\medskip
	
\end{document}